\newtheorem{lemma}{Lemma}[section]
\newtheorem{prop}[lemma]{Proposition}
\newtheorem{conj}[lemma]{Conjecture}
\newtheorem{question}[lemma]{Question}
\newtheorem{claim*}{Claim}
\newtheorem{thm}[lemma]{Theorem}
\newtheorem*{thm*}{Theorem}
\theoremstyle{definition}
\newtheorem{defn}[lemma]{Definition}
\newtheorem{remark}[lemma]{Remark}
\newcommand{\A}{{\mathbb A}}
\newcommand{\G}{{\mathbb G}}
\newcommand{\PP}{{\mathbb P}}
\newcommand{\Q}{{\mathbb Q}}
\newcommand{\Z}{{\mathbb Z}}
\newcommand{\Xbar}{{\overline{X}}}
\newcommand{\kbar}{{\overline{k}}}
\newcommand{\Ybar}{{\overline{Y}}}
\newcommand{\Adeles}{{\mathbb A}}
\newcommand{\calB}{{\mathcal B}}
\newcommand{\calG}{{\mathcal G}}
\newcommand{\calO}{{\mathcal O}}
\newcommand{\calU}{{\mathcal U}}
\newcommand{\calV}{{\mathcal V}}
\newcommand{\calY}{{\mathcal Y}}
\newcommand{\frakF}{{\mathfrak F}}
\newcommand{\frakR}{{\mathfrak R}}
\DeclareMathOperator{\supp}{supp}
\DeclareMathOperator{\inv}{inv}
\DeclareMathOperator{\im}{im}
\DeclareMathOperator{\Hom}{Hom}
\DeclareMathOperator{\Gal}{Gal}
\DeclareMathOperator{\Br}{Br}
\DeclareMathOperator{\etBr}{et,Br}
\DeclareMathOperator{\Sym}{Sym}
\DeclareMathOperator{\Pic}{Pic}
\DeclareMathOperator{\Spec}{Spec}
\DeclareMathOperator{\et}{et}
\DeclareMathOperator{\res}{res}
\DeclareMathOperator{\cores}{cores}
\DeclareMathOperator{\CH}{CH}
\newcommand{\isom}{\simeq}
\newcommand{\into}{\hookrightarrow}
\numberwithin{equation}{section}
\numberwithin{table}{section}
\newcommand{\defi}[1]{\textsf{#1}} 
\def\AA{\mathbb{A}}
\def\NN{\mathbb{N}}
\def\PP{\mathbb{P}}
\def\calB{\mathcal{B}}
\def\calG{\mathcal{G}}
\def\calO{\mathcal{O}}
\def\calU{\mathcal{U}}
\def\calV{\mathcal{V}}
\def\calY{\mathcal{Y}}
\newcommand\xdownarrow[1][3ex]{%
   \mathrel{\rotatebox[origin=c]{90}{$\xleftarrow{\rule{#1}{0pt}}$}}
}
\newenvironment{talign*}
 {\let\displaystyle\textstyle\csname align*\endcsname}
 {\endalign}
\title{Descent and étale-Brauer obstructions for 0-cycles}
\author{Francesca Balestrieri}
\address{The American University of Paris,  5 Boulevard de La Tour-Maubourg, 75007 Paris, France}
\email{fbalestrieri@aup.edu}
\author{Jennifer Berg}
\address{Bucknell University, Department of Mathematics, Lewisburg, PA 17837, USA}
\email{jsb047@bucknell.edu}
\thanks{\textbf{MSC 2020. } Primary: 11G35, 14C25, 14G05, 14G12}
\begin{document}

\maketitle
\begin{abstract}
For 0-cycles on a variety over a number field, we define an analogue of the classical descent set for rational points. This leads to, among other things, a definition of the \'etale-Brauer obstruction set for 0-cycles, which we show is contained in the Brauer-Manin set and is compatible with Suslin's singular homology of degree 0. We then transfer some tools and techniques used to study the arithmetic of rational points into the setting of 0-cycles. For example, we extend the strategy developed by Y. Liang, relating the arithmetic of rational points over finite extensions of the base field to that of 0-cycles, to torsors. We give applications of our results to study the arithmetic behaviour of 0-cycles for  Enriques surfaces,  torsors given by (twisted) Kummer varieties, universal torsors, and torsors under tori. 
\end{abstract}

\section{Introduction}
\subsection{Obstruction sets for rational points.} Let $k$ be a number field and $X$ be a smooth, proper, geometrically integral variety over $k$. In order to investigate questions concerning the qualitative arithmetic behaviour of the set $X(k)$ of rational points on $X$, a well-known strategy is that of defining so-called \emph{obstruction sets} -- namely, certain subsets of the set $X(\A_k)$ of adelic points on $X$ that still contain $X(k)$ -- and trying to exploit the more tractable local nature of these sets to study the specific question at hand. Such sets can be used, for example, to determine whether $X(k) = \emptyset$ by refining local-to-global principles, or to classify varieties according to their arithmetic behaviour. The theory of obstruction sets in the context of rational points has been developed quite extensively over the last several decades. In \cites{Man71}, Manin first constructed an obstruction set by using the Brauer group $\Br(X) := H^2_{\et}(X, \G_m)$ and class field theory to define what is now known as the \emph{Brauer-Manin set}, namely 
\[ X(\Adeles_k)^{\Br} := \bigcap_{\alpha \in \Br X}\left \{ (x_v)_{v \in \Omega_k} \in X(\Adeles_k) : \textstyle \sum_{v \in \Omega_k} \inv_v \alpha(x_v) = 0 \right \}. \]
Later, Colliot-Thél\`ene and Sansuc \cites{CTS87} defined a new type of obstruction sets known as \emph{descent sets}, based not on the Brauer group, but rather on the notion of torsors under algebraic groups. That is, if $G$ is a linear algebraic group over $k$ and if $[g: Y \to X] \in H^1_{\et}(X,G)$ is (the $k$-class of) a $G$-torsor over $X$, then we can define the \emph{descent set associated to $g$} as
\[ X(\A_k)^g := \bigcup_{[\tau] \in H^1(k,G)} g^\tau(Y^\tau(\Adeles_k)), \]
where the $G^\tau$-torsor $g^\tau: Y^\tau \to X$ is the twist of the torsor $g: Y \to X$ by $\tau$ (see \cite{Sko01}*{Ch.\ 2}).
We can also combine these two main types of obstructions together to yield potentially finer obstruction sets. For example, the \emph{\'etale-Brauer} set,
\[X(\Adeles_k)^{\et, \Br} := \bigcap_{F \textrm{ finite lin alg $k$-group}} \enskip \bigcap_{[f: Y \to X] \in H^1_{\et}(X,F)} \enskip \bigcup_{\tau \in H^1_{\et}(k,F)} f^\tau(Y^\tau(\A_k)^{\Br})),\] 
can be obtained by considering the Brauer-Manin sets of all finite \'etale covers of $X$. Although the obstruction sets defined using the Brauer group and those defined by using torsors have quite different natures, there is sometimes a close 
interrelation between them -- see, for example \cite{Sko01}, \cite{Har02}, \cite{Sto07}, \cite{Dem09}, \cite{Skoro09}, \cite{Bal16}, \cite{Cao20}. 
Unfortunately, despite the richness of the different obstructions available, the arithmetic behaviour of rational points on varieties is still not completely understood in general: for example,  in \cites{Poo10} Poonen constructed a variety $X$ over a number field $k$ such that $X(\A_k)^{\et \Br} \neq \emptyset$ but $X(k) = \emptyset$, showing that even the finest obstruction set currently at our disposal, i.e., the étale-Brauer set, is not quite refined enough to capture the lack of rational points.

\subsection{Obstruction sets for 0-cycles.} Let us now consider the theory of obstruction sets in a different context, namely, that of 0-cycles on $X$, which can be viewed as generalisations of rational points. Given a fixed integer $d$, the main object of interest is the set $Z_0^d(X)$ of 0-cycles of degree $d$ on $X$, that is, the set of all formal $\Z$-sums $z:= \sum_{x} n_x x$ of closed points $x$ on $X$ such that $\deg(z) := \sum_{x \in X} n_x [k(x):k] = d$, where $k(x)$ is the residue field of $x$.  Much in the same way as we did for rational points, we can ask qualitative arithmetic questions about $Z_0^d(X)$ -- for example, whether $Z_0^d(X)= \emptyset$. The basic strategy to tackle such questions remains the same, that is, we consider subsets of the set $Z_0^d(X_{\A_k})$ of adelic degree $d$ 0-cycles on $X$ and try to exploit the local nature of these subsets to draw conclusions about $Z_0^d(X)$.

There are two striking differences between the theory of obstructions for rational points and that for 0-cycles, however. First, while there is a wide range of obstructions sets and tools currently available for rational points, the same cannot be said for 0-cycles. One key complication is that, in the definition of 0-cycles, several different field extensions need to be considered at once, thereby creating challenges in generalising obstruction sets from rational points to this new context. Currently, the only obstruction set that has been generalised is the Brauer-Manin obstruction \cites{CT95}, defined in a similar way as in the context of rational points, but with the extra use of corestriction maps to deal with the different residue fields of the support of the 0-cycles, namely
{\small
\[ Z_0^d(X_{\A_k})^{\Br}:= \bigcap_{\alpha \in \Br X} \left \{ \left(\sum_{x_v \in X_{k_v}} n_{x_v} x_v\right)_{v \in \Omega_k } \in Z_0^d(X_{\A_k}) :  \sum_{v \in \Omega_k }\sum_{x_v \in X_{k_v}} n_{x_v} \inv_v\left( \cores_{k_v(x_v)/k_v} \alpha(x_v)\right) = 0\right\}.\]
}

The second difference concerns the arithmetic behaviour of $0$-cycles. While, as previously mentioned, even the finest known obstruction cannot explain all failures of the Hasse principle for rational points, the qualitative arithmetic of 0-cycles is, conjecturally, completely captured by the Brauer-Manin obstruction -- and thus  much more well behaved.
Indeed, Colliot-Th\'el\`ene \cite{CT95} conjectured that the Brauer-Manin obstruction is the only one to weak approximation for $0$-cycles of degree $1$ on any smooth, proper, geometrically integral variety $X$ over $k$ (see also \cite{KS86}). Colliot-Th\'el\`ene's conjecture is encompassed by the following main guiding conjecture in the context of $0$-cycles, which is known for a few cases (see \cite{Sal88}, \cite{Sai89}, \cite{CT99}, \cite{ES08}, \cite{Wit12}, \cite{Lia13}, and \cite{CTS21}*{p.\ 383-4} for a detailed list of references) but remains open in general.  

\begin{conj}[Conjecture (E)] \label{conjE}   For any smooth, projective variety $X$ over a number field $k$, the following complex is exact,
\[ \widehat{\CH_0(X)} \longrightarrow \prod_{v \in \Omega_k} \widehat{\CH_0'(X_{k_v})} \longrightarrow \Hom(\Br X, \Q/\Z),\]
where $\widehat{\ \ \ \ }$ denotes the profinite completion, and where $\CH_0'(X_{k_v})$ coincides with $\CH_0(X_{k_v})$ at finite places and is a modification of the Chow group at infinite places (see \cite{Wit12}*{\S1.1} or \cite{CT95}*{\S1} for further details).
\end{conj}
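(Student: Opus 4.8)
The plan is to recognise first that the displayed sequence is genuinely a complex, and that the entire difficulty lies in exactness at the middle term. That the composite vanishes is precisely the reciprocity law of global class field theory, packaged through the Brauer--Manin pairing: for a global $0$-cycle class and any $\alpha \in \Br X$, the sum over all places of the local invariants vanishes, exactly as recorded in the definition of $Z_0^d(X_{\A_k})^{\Br}$. Thus the statement to be proved is that an element of $\prod_{v \in \Omega_k} \widehat{\CH_0'(X_{k_v})}$ orthogonal to every Brauer class already lies in the closure of the image of $\widehat{\CH_0(X)}$.

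First I would isolate the case where $X$ is a curve. Here one passes to the Jacobian and reduces exactness to an arithmetic duality statement for the abelian variety $\Jac(X)$; this is controlled by the finiteness of the Tate--Shafarevich group $\Sha(\Jac X)$, following the work of Saito and Colliot-Th\'el\`ene (see \cite{Sai89}, \cite{CT99}). Granting this base case, the strategy for higher-dimensional $X$ is the fibration method: choose a dominant morphism $X \to B$ onto a curve or onto $\PP^1$ with rationally connected (or otherwise well-understood) generic fibre, and deduce Conjecture (E) for $X$ from the corresponding statement for the fibres and for the base. The crucial mechanism is the dictionary, going back to Liang \cite{Lia13} and extended in the present paper to the setting of torsors, that converts information about rational points on the fibres over \emph{all} finite extensions of $k$ into information about $0$-cycles on $X$; combined with Wittenberg's descent and d\'evissage arguments \cite{Wit12}, this should propagate exactness up the fibration.

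The hard part will be that no such reduction is available for a completely general smooth projective $X$, which is exactly why Conjecture (E) remains open. The fibration method demands simultaneous control of the generic fibre --- one needs the analogue of the conjecture, or a strong-approximation-type input, to hold for it over every finite extension --- and of the bad fibres, where the local behaviour of $0$-cycles can degenerate; patching these together is where the argument currently breaks down outside of special geometries (conic and quadric bundles, certain homogeneous-space fibrations, and the like). Moreover, even the curve base case is conditional on the finiteness of $\Sha$, a conjecture of comparable depth. I therefore do not expect an unconditional general proof by this route: the realistic outcome is a proof of Conjecture (E) for those $X$ that can be fibred into building blocks for which it is already known, with the general statement retained as a guiding principle rather than a theorem.
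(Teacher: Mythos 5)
The statement you were asked to prove is Conjecture (E), which the paper does not prove: it is stated explicitly as an open conjecture, known only in special cases (the paper cites \cite{Sal88}, \cite{Sai89}, \cite{CT99}, \cite{ES08}, \cite{Wit12}, \cite{Lia13} for these), and it is used in the paper purely as a guiding principle. Your proposal is therefore the correct response: you identify the only unconditionally provable part (that the sequence is a complex, via global reciprocity applied to the Brauer--Manin pairing with corestrictions), you correctly locate the curve case in Saito's work conditional on finiteness of the Tate--Shafarevich group of the Jacobian, you correctly describe the fibration method and the Liang/Wittenberg d\'evissage as the mechanism behind the known higher-dimensional cases, and you rightly conclude that no route to the general statement exists at present rather than manufacturing a spurious argument. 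There is nothing in the paper to compare against beyond this, and your survey of the state of the art is consistent with the paper's own framing.
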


\subsection{Main results}
The main aim of this paper is to 
fill in some significant gaps in the theory of obstructions for 0-cycles by defining an analogue of the descent set associated to a torsor for $0$-cycles. 
Given a fixed $d \in \Z$ and a fixed torsor $g: Y \to X$ under a linear algebraic group $G$ over $k$,  we define the \emph{degree $d$ descent set associated to $g$} (see Definition \ref{def1}) to be
{\small \[ Z_0^d(X_{\A_k})^g :=
\bigcup_{\substack{S \textrm{ finite non-empty}\\\textrm{ set of finite field}\\
\textrm{extensions of $k$}} }  \textbf{g}_{\ast, \textrm{ad}} \left(  \coprod_{\substack{(T_L \subset H^1(L,G))_{L \in S}: \\ \textup{$T_L\neq \emptyset$ finite for all $L \in S$}}}  \coprod_{\substack{\left((\Delta_\tau)_{\tau \in T_L}\right)_{L \in S}: \\ \Delta_\tau \in \Z \textrm{ for all $\tau$}, \\ \sum_{L \in S} \sum_{\tau \in T_L} \Delta_\tau [L:k]= d}} \prod_{L \in S} \prod_{\tau \in T_L} Z_0^{\Delta_\tau}(Y_{\A_{L}}^{\tau})  \right),\]}
where
{\small \[  \textbf{g}_{\ast, \textrm{ad}}:  
 \left(\left( \left( \left( \sum_{y \in Y_{L_w}^{\tau}} n_{y}\; y \right)_{\substack{w \in \Omega_{L} :\\ w|v}}\right)_{\tau \in T_L} \right)_{L \in S} \right)_{v \in \Omega_k} \longmapsto   \left( \sum_{L \in S}\sum_{\tau \in T_L} \sum_{w \in \Omega_L: w \mid v} \sum_{y \in Y_{L_w}^{\tau}} n_{y} (s_{L_w} \circ g_{L_w}^{\tau})_\ast (y)\right)_{v \in \Omega_k}\]}
 and where $g_{L_w}^{\tau} \colon Y_{L_w}^{\tau} \to X_{L_w}$ and $s_{L_w} \colon X_{L_w} \to X_{k_v}$ are the natural maps. This set $Z_0^d(X_{\A_k})^g$ contains $Z_0^d(X)$ and, moreover, generalises the descent set $X(\A_k)^g$ when $d=1$. This set arises in a very natural way that mimics the way in which the descent set for rational points is defined, once we take into account the more complex structure of 0-cycles (see Sections \ref{sec: global} and \ref{sec: local} for more  details).

The definition of the descent set leads to, among other results, a definition of the \'etale-Brauer obstruction for $0$-cycles (Definition \ref{def:fBrSet}), namely
\[ \small Z_0^d(X_{\A_k})^{ \etBr} := \! \! \! \! \bigcup_{\substack{S \neq \emptyset \textrm{ finite set}\\\textrm{ of finite field}\\ \textrm{ extns of $k$}}}   \ \bigcap_{\substack{G \text{ finite lin.}\\\text{alg.\ $k$-group}}} \, \bigcap_{\substack{[f: Y \to X]\\ \in H^1(X,G)}}  \textbf{f}_{\ast, \textrm{ad}} \left(  \coprod_{\substack{(T_L \subset H^1(L,G))_{L \in S} \\ \textup{$T_L\neq \emptyset$ finite} \\ \text{for all $L \in S$}}}  \coprod_{\substack{\left((\Delta_\tau)_{\tau \in T_L}\right)_{L \in S}: \\ \Delta_\tau \in \Z \textrm{ for all $\tau$}, \\ \sum_{L \in S} \sum_{\tau \in T_L} \Delta_\tau [L:k]= d}} \prod_{L \in S} \prod_{\tau \in T_L} Z_0^{\Delta_\tau}(Y_{\A_{L}}^{\tau})^{\Br} \right).\]
In Section \ref{sec: local}, we show how the descent set and its variations involving the Brauer group are compatible with an important equivalence relation on 0-cycles, namely the finite correspondences equivalence relation, which gives rise to Suslin's singular homology groups of degree 0; for proper varieties, Suslin's singular homology groups of degree 0 correspond to Chow groups.

More generally,  these definitions afford the possibility of a more systematic theory of obstructions for $0$-cycles, analogous to that of rational points. It also provides new potential avenues for investigating Conjecture \ref{conjE}: if Conjecture \ref{conjE} holds, it implies that the Brauer-Manin obstruction is the only one for weak approximation for 0-cycles of degree 1 (see \cite{Lia13}*{Theorem A}); this, on the other hand, should imply that the étale-Brauer set and the Brauer-Manin set for 0-cycles of degree 1 are ``equal" in the Chow group, in the sense that, for any $n \in \NN$, for any finite set $S' \subset \Omega_k$ of places of $k$, if $(z_v)_v \in Z_0^1(X_{\A_k})^{\Br}$, then there is some $(\tilde{z}_v)_v \in Z_0^1(X_{\A_k})^{\etBr}$ such that $z_v$ and $\tilde{z}_v$ have the same image in $\CH_0(X_{k_v})/n$ for any $v \in S'$.

Equipped with our new definitions, in \S\ref{sec:LiangTorsors} we begin to transfer some of the tools and techniques used to study descent obstructions to rational points into the $0$-cycles setting.  We do so in the spirit of some of the ideas developed by Liang in \cite{Lia13}. His strategy consists of trying to show that, under certain conditions, if some arithmetic property (such as, for example, the Brauer-Manin obstruction being the only one for weak approximation) holds for rational points for enough field extensions of finite degree of the base field, then an analogous arithmetic property holds for 0-cycles as well. 
In \S\ref{subsec:LiangTorsors}, we extend Liang's strategy to torsors.

\begin{thm*}[Theorem \ref{thm: fBr}] Let $X$ be a smooth, proper, geometrically integral variety over a number field $k$. Let $f : Y \to X$ be an $F$-torsor for some linear algebraic group $F$ over $k$ and with $Y$ geometrically integral. Let $d$ be any integer. Assume that
\begin{enumerate}[label = (\roman*)]
\item for any finite extension $K/k$ and any $\tau \in H^1(K, F)$, the quotient $\Br_{nr}(Y^\tau_{K})/\Br_0 (Y^\tau_{K})$ is finite, and there exists a finite extension $K'_\tau$ of $K$ so that for all finite extensions $L$ of $K$ linearly disjoint from $K'_\tau$ over $K$, the homomorphism induced by restriction 
\[ \res_{L/K} : \Br_{nr}(Y^\tau_{K})/\Br_0 (Y^\tau_{K}) \to \Br_{nr}(Y^\tau_L)/\Br_0(Y^\tau_L)\]
is surjective;
\item  for any finite extension $L/k$, we have that $X_L(\Adeles_L)^{f_L, \Br_{nr}} \neq \emptyset $ if and only if $ X(L) \neq \emptyset$ (respectively, if $X_L(\Adeles_L)^{f_L, \Br_{nr}} \ne \emptyset$, then weak approximation holds for $X_L$).

\end{enumerate}
Then $f$-descent with unramified Brauer obstruction is the only obstruction to the Hasse principle (respectively, weak approximation) for 0-cycles of degree $d$ on $X$.
\end{thm*}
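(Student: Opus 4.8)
The plan is to follow Liang's strategy (\cite{Lia13}) of deducing arithmetic statements for $0$-cycles from the corresponding statements for rational points over varying finite extensions of $k$, now carrying along the torsor data recorded by the $f$-descent set. I describe the Hasse principle assertion in detail; the weak approximation variant is obtained by the same argument together with the standard bookkeeping needed to approximate simultaneously in each local Chow group, and the parenthetical case in which $f$ is only a proper map that is an $F$-torsor over a dense open costs nothing beyond a moving lemma keeping all supports off the locus where $f$ degenerates.

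So suppose $Z_0^d(X_{\A_k})^{f,\Br_{nr}}\neq\emptyset$, where this set is the evident single-torsor, unramified variant of Definition \ref{def:fBrSet} built from Definition \ref{def1}. First I would unwind this hypothesis: it supplies a finite nonempty set $S$ of finite extensions of $k$, finite nonempty subsets $T_K\subseteq H^1(K,F)$ for $K\in S$, integers $\Delta_\tau$ with $\sum_{K\in S}[K:k]\sum_{\tau\in T_K}\Delta_\tau=d$, and for each pair $(K,\tau)$ an adelic $0$-cycle $z^\tau_K\in Z_0^{\Delta_\tau}(Y^\tau_{\A_K})^{\Br_{nr}}$ whose image under $\mathbf{f}_{\ast,\textrm{ad}}$ is the given adelic $0$-cycle on $X$. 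Since $Y$ is geometrically integral, each twist $Y^\tau_K$ is a smooth proper geometrically integral $K$-variety, so each $z^\tau_K$ is an honest adelic $0$-cycle of degree $\Delta_\tau$ orthogonal to $\Br_{nr}(Y^\tau_K)$; because $s_{K_w}\circ g^\tau_{K_w}$ multiplies $K$-degrees by $[K:k]$, it is enough to treat the pairs $(K,\tau)$ one at a time and then add the pushforwards, the degree identity above guaranteeing that the total has degree $d$ on $X$.

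The engine is a torsor-enhanced version of Liang's construction. Fixing $(K,\tau)$ and using the finiteness of $\Br_{nr}(Y^\tau_K)/\Br_0(Y^\tau_K)$ from hypothesis (i), I would run the Bertini-and-Riemann--Roch ``moving'' argument on $Y^\tau_K$ to replace $z^\tau_K$, up to rational equivalence and local approximation, by an effective cycle supported at a single closed point of residue field a finite extension $L\supseteq K$, chosen (via Chebotarev) linearly disjoint from $K'_\tau$ over $K$. Such a closed point is an $L$-rational point of $Y^\tau_L=(Y^\tau_K)_L$, the twist attached to $\res_{L/K}\tau\in H^1(L,F)$, lying above an $L$-point of $X_L$; its associated adelic point is orthogonal to $\Br_{nr}(Y^\tau_L)$. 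This last orthogonality is precisely where hypothesis (i) is used: the surjectivity of $\res_{L/K}$ on $\Br_{nr}(Y^\tau_K)/\Br_0(Y^\tau_K)$, valid for $L$ linearly disjoint from $K'_\tau$, shows that every class of $\Br_{nr}(Y^\tau_L)/\Br_0(Y^\tau_L)$ comes from below, so the orthogonality of $z^\tau_K$ against the (finite) group downstairs — transported upward through the compatibility of the corestriction formalism defining the $0$-cycle Brauer pairing with restriction to $L$ — forces orthogonality of the constructed point against all of $\Br_{nr}(Y^\tau_L)$. Consequently the image $L$-point of $X_L$ lifts to an adelic point of the twist $Y^\tau_L$ orthogonal to $\Br_{nr}$, hence lies in $X_L(\A_L)^{f_L,\Br_{nr}}$.

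It remains to invoke hypothesis (ii) over $L$: nonemptiness of $X_L(\A_L)^{f_L,\Br_{nr}}$ yields $X(L)\neq\emptyset$ (respectively, weak approximation on $X_L$), i.e.\ a genuine closed point of $X$, and assembling these over all $(K,\tau)$ — reconciling their degrees with the target $d$ through the identity $\sum_{K}[K:k]\sum_{\tau}\Delta_\tau=d$ — produces the sought global $0$-cycle. I expect the main obstacle to be this middle step: coordinating the several extensions $K\in S$ and twists $\tau\in T_K$ so that the field choices and local approximations are mutually compatible (a single $L$ linearly disjoint from every $K'_\tau$ will do, again by Chebotarev), and, above all, verifying that unramified-Brauer orthogonality genuinely survives the passage from the $0$-cycle pairing on $Y^\tau_K$ to the rational-point pairing on $Y^\tau_L$ — it is exactly this transfer that the finiteness and restriction-surjectivity of hypothesis (i) are designed to make possible.
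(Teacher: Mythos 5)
Your overall skeleton (unwind the descent set, run Liang's construction one pair $(K,\tau)$ at a time, use hypothesis (i) to transfer unramified-Brauer orthogonality up to a linearly disjoint extension, then invoke hypothesis (ii)) is indeed the paper's approach, but the two steps you treat as routine are exactly where the proof lives, and as written each contains a genuine gap. The first is the degree bookkeeping. You impose no degree condition on the extension $L/K$ and then claim the closed points produced by hypothesis (ii) can be assembled into a cycle of degree $d$ ``through the identity $\sum_{K}[K:k]\sum_{\tau}\Delta_\tau=d$''. This cannot work: the $\Delta_\tau$ are arbitrary integers (possibly zero or negative), so no field extension has degree $\Delta_\tau$, and Liang's method only controls $[L:K]$ modulo a chosen modulus; hence the total degree of the points you obtain is at best \emph{congruent} to $d$, never equal to it. The paper's proof resolves this by fixing at the outset an auxiliary closed point $\tilde{x}\in X$, constructing each extension $K_\tau/K$ with $[K_\tau:K]\equiv \Delta_\tau \pmod{n\,[k(\tilde{x}):k]}$, and correcting the assembled cycle by a term $\lambda n \tilde{x}$ with $\lambda\in\Z$ chosen to force degree exactly $d$; since the correction is divisible by $n$, it disappears in $\CH_0(X_{k_v})/n$, which is also precisely what makes the weak approximation comparison (which you defer as ``standard bookkeeping'') go through. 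Without $\tilde{x}$, the congruence, and the $\lambda n\tilde{x}$ term, no cycle of degree $d$ is produced.

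The second gap is your description of the construction itself, which taken literally begs the question. You propose to replace the \emph{adelic} $0$-cycle $z^\tau_K$ ``up to rational equivalence and local approximation by an effective cycle supported at a single closed point of residue field $L$'' on $Y^\tau_K$, and then speak of ``its associated adelic point''. A closed point of $Y^\tau_K$ with residue field $L$ is a global object; if you could produce one, you would have $X(L)\neq\emptyset$ on the spot and hypothesis (ii) would be superfluous (and its adelic image would be orthogonal to the whole Brauer group for free). What Liang's fibration argument actually yields -- the closed point lives on $\PP^1_K$, not on $Y^\tau_K$ -- is a finite extension $L/K$ (linearly disjoint from $K'_\tau$ and satisfying the degree congruence above) together with an \emph{adelic} point in $Y^\tau(\A_L)$, close to $z^\tau_K$ at the finitely many relevant places and orthogonal to the image of $\Br_{nr}(Y^\tau_K)$; here the finiteness in (i) is used beforehand, via an integral model making the finitely many representatives evaluate to zero away from an enlarged finite set of places, and the surjectivity in (i) then upgrades orthogonality to all of $\Br_{nr}(Y^\tau_L)$. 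Only this adelic point exists before (ii) is applied. A smaller misreading: the parenthetical in the statement asserts that $f$ itself is proper in the weak approximation case -- needed so that pushforward descends to Chow groups -- not that $f$ is a torsor over a dense open, so no moving lemma off a degeneration locus is involved.
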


In \S\ref{sec: applications} we consider several applications of our tools, often in contexts in which there is deeper knowledge of the obstructions that govern the existence and density of rational points. 
Firstly, building on work by Ieronymou \cite{Ier21}, we get an  application of the \'etale-Brauer set for Enriques surfaces -- at least, conditionally on a conjecture by Skorobogatov. 

\begin{thm*}[Theorem \ref{thm: etBrEnriques}] Let $X$ be an Enriques surface over a number field $k$ and let $f: Y \to X$ be a K3 covering of $X$, i.e. a $\Z/2\Z$-torsor over $X$ with $Y$ a K3 surface. Let $d \in \Z$. Assume that Conjecture \ref{conj: Skoro} holds. Then, for any positive integer $n$, if $ (z_v)_v \in Z_0^d(X_{\A_k})^{f, \Br}$ then there exists a global 0-cycle $z_{n} \in Z_0^d(X)$ such that $z_{n}$ and $(z_v)_v$ have the same image in $\CH_0(X_{k_v})/n$ for all $v \in \Omega_k$. 
\end{thm*}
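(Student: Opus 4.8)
The plan is to deduce the statement directly from Theorem~\ref{thm: fBr} in its weak-approximation form, applied with $F = \Z/2\Z$ and $f \colon Y \to X$ the K3 double cover. The twists $Y^\tau$ of $f$ by classes $\tau \in H^1(K,\Z/2\Z) = K^*/K^{*2}$ are again K3 surfaces, hence smooth and proper, so purity gives $\Br_{nr}(Y^\tau_K) = \Br(Y^\tau_K)$; consequently the descent set $Z_0^d(X_{\A_k})^{f,\Br}$ in the statement coincides with the $f$-descent-with-unramified-Brauer set produced by Theorem~\ref{thm: fBr}, and the weak-approximation conclusion of that theorem is exactly the asserted matching of $(z_v)_v$ with a global 0-cycle $z_n$ in $\CH_0(X_{k_v})/n$ for every $n$. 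It therefore suffices to verify hypotheses (i) and (ii).

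For (i), each $Y^\tau_K$ is a K3 surface over the number field $K$, so $\Br(Y^\tau_K)/\Br_0(Y^\tau_K)$ is finite by the theorem of Skorobogatov-Zarhin, and by purity this is $\Br_{nr}(Y^\tau_K)/\Br_0(Y^\tau_K)$. For the stability condition I would take $K'_\tau/K$ finite and large enough to contain the fields of definition of all Brauer classes of $Y^\tau$ that can appear over any extension of $K$: the algebraic part $H^1(\cdot,\Pic\overline{Y^\tau})$ is governed by the finite Galois action on the Picard lattice, and the transcendental part by the finitely many invariant classes detected through Skorobogatov-Zarhin finiteness. Then for any $L/K$ linearly disjoint from $K'_\tau$, a class in $\Br_{nr}(Y^\tau_L)/\Br_0$ is already defined over $L \cap K'_\tau = K$, so $\res_{L/K}$ is surjective.

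Condition (ii) is the crux, and here Ieronymou's work and Conjecture~\ref{conj: Skoro} enter. For each finite extension $L/k$ I must show that $X_L(\Adeles_L)^{f_L,\Br_{nr}} \neq \emptyset$ forces weak approximation for $X_L$. The descent set unwinds, over the $\Z/2\Z$-torsor $f_L$, into a union over the quadratic twists $Y^\tau_L$ of their Brauer-Manin sets pushed forward to $X_L$, so a nonempty member produces a K3 twist with $Y^\tau_L(\Adeles_L)^{\Br} \neq \emptyset$. Conjecture~\ref{conj: Skoro}, that the Brauer-Manin obstruction is the only one for K3 surfaces, then yields weak approximation on $Y^\tau_L$, and Ieronymou's analysis of the covering $f_L$ transfers this to weak approximation for the Enriques surface $X_L$.

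The main obstacle is precisely the uniformity required in (ii): one must ensure that the twists remain K3 surfaces over every finite extension $L$, that Ieronymou's hypotheses and the stated form of Skorobogatov's conjecture apply uniformly to all such $L$, and that the descent-with-Brauer set used here matches the obstruction set in Ieronymou's formulation. Once (i) and (ii) are established in this uniform manner, Theorem~\ref{thm: fBr} applies verbatim and yields the claim.
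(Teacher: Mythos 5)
Your proposal takes a genuinely different route from the paper: the paper never invokes Theorem \ref{thm: fBr} here. Instead it unpacks the definition of $Z_0^d(X_{\A_k})^{f,\Br}$, observes that every twist $Y^\tau_K$ appearing there is again a K3 surface over $K$, applies Ieronymou's 0-cycle theorem (Theorem \ref{thm: Ieronymou}) to each twist to produce global cycles $z_\tau \in Z_0^{\Delta_\tau}(Y^\tau_K)$, and pushes these forward via $\mathbf{f}_\ast$, checking that proper pushforward preserves the congruences in the Chow groups. Your route, even if it could be completed, proves less than the statement requires: the conclusion of Theorem \ref{thm: fBr} is weak approximation for 0-cycles in the paper's sense, i.e.\ for each finite subset $S' \subset \Omega_k$ there is a global cycle $z_{n,S'}$ (depending on $S'$) agreeing with $(z_v)_v$ in $\CH_0(X_{k_v})/n$ only for $v \in S'$, whereas Theorem \ref{thm: etBrEnriques} asserts the existence of a single $z_n$ that works for \emph{all} $v \in \Omega_k$ simultaneously. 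The all-places conclusion is inherited from the all-places form of Ieronymou's theorem and survives pushforward in the paper's argument; Theorem \ref{thm: fBr} cannot deliver it.

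There are also two gaps in your verification of the hypotheses. For (i), the extension $K'_\tau$ you describe does not exist: for a K3 surface the geometric Brauer group is $(\Q/\Z)^{22-\rho}$, and as $L$ grows the transcendental part of $\Br(Y^\tau_L)/\Br_0(Y^\tau_L)$ grows without bound, so no finite extension can contain ``the fields of definition of all Brauer classes that can appear over any extension of $K$''. The correct statement --- surjectivity of $\res_{L/K}$ for all $L$ linearly disjoint from one fixed finite $K'_\tau$ --- is a genuine stability theorem for Brauer groups of K3 surfaces and is precisely the technical core of Ieronymou's work; it does not follow from Skorobogatov--Zarhin finiteness plus a field-of-definition argument. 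For (ii), the statement you try to verify, that $X_L(\A_L)^{f_L,\Br_{nr}} \neq \emptyset$ implies weak approximation for the Enriques surface $X_L$, is not a consequence of Conjecture \ref{conj: Skoro} and is false in general: adelic points of $X_L$ need not lift to any single twist, and Enriques surfaces carry nontrivial Brauer (and descent) obstructions to weak approximation, so weak approximation on one K3 twist cannot ``transfer'' to $X_L$. What Conjecture \ref{conj: Skoro} actually yields is density of $X_L(L)$ in the smaller set $X_L(\A_L)^{f_L,\Br}$ (lift an adelic point to a twist, approximate there by a rational point, push down); that weaker statement happens to be all that the proof of Theorem \ref{thm: fBr} uses, so this gap is partly one of formulation --- but the gap in (i) and the mismatch in the strength of the conclusion remain, and they are fatal to the approach as proposed. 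The paper's proof sidesteps all of this by black-boxing the hard K3 input into Theorem \ref{thm: Ieronymou}.
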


As a further application, we study the arithmetic of 0-cycles in the context of what can be considered as higher-dimensional generalisations of Enriques surfaces, where we look at the case where our torsors are (twisted) Kummer varieties.

\begin{thm*}[Theorem \ref{thm: Kummer}] Let $X$ be a smooth, proper, geometrically integral variety over a number field $k$. Let $f: Y \to X$ be a torsor under some linear algebraic group $F$ over $k$, where $Y$ is a (twisted) Kummer variety over $k$. Let $d \in \Z$ be \emph{odd}. Assume that Question \ref{q:kum} has a positive answer. Then
$Z_0^d(X_{\A_k})^{f,\Br\{2\}} \neq \emptyset$ implies $Z_0^{d}(X) \neq \emptyset$.
\end{thm*}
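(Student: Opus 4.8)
The plan is to adapt the proof of Theorem \ref{thm: fBr}, the only substantive changes being that the role of the unramified Brauer group and the Hasse principle for rational points is played here by the $2$-primary Brauer group and the (conjectural) positive answer to Question \ref{q:kum}, and that the restriction to odd $d$ is what allows the passage from $0$-cycles back to rational points. First I would unwind Definition \ref{def:fBrSet}: since $Z_0^d(X_{\A_k})^{f,\Br\{2\}} \neq \emptyset$, there is a witnessing finite nonempty set $S$ of finite extensions of $k$, finite nonempty sets $T_K \subset H^1(K,F)$, and integers $\Delta_\tau$ with $\sum_{K \in S}[K:k]\sum_{\tau \in T_K}\Delta_\tau = d$, together with adelic $0$-cycles $z_\tau \in Z_0^{\Delta_\tau}(Y^\tau_{\A_K})^{\Br\{2\}}$ whose image under $\mathbf{f}_{\ast,\mathrm{ad}}$ is the given adelic cycle. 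Each twist $Y^\tau$ is again a (twisted) Kummer variety over $K$, so Question \ref{q:kum} is available for every piece. The goal is then to replace each adelic piece $z_\tau$ by a \emph{global} cycle $w_\tau \in Z_0^{\Delta_\tau}(Y^\tau_K)$, for then $\mathbf{f}_{\ast,\mathrm{ad}}$ applied to $(w_\tau)$ is a global $0$-cycle on $X$ of degree $\sum_{K,\tau}[K:k]\Delta_\tau = d$.

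The key new ingredient is the handling of the pieces with $\Delta_\tau$ odd. Here I would first pass from the odd-degree adelic cycle $z_\tau \in Z_0^{\Delta_\tau}(Y^\tau_{\A_K})^{\Br\{2\}}$ to an adelic point of $Y^\tau_K$ orthogonal to $\Br\{2\}$. This is exactly where the $2$-primary hypothesis is essential: every class in $\Br\{2\}$ takes values in the $2$-primary part of $\Q/\Z$, and $\Delta_\tau$ is a unit in $\Z_2$, so at each place the local pairing $\sum_w n_w \inv_w(\cores\,\alpha(y_w))$ of an odd-degree local $0$-cycle coincides with that of a single suitably chosen local point, whence $\Br\{2\}$-orthogonality survives place by place (the archimedean places requiring the usual modification of the local Chow group, as in Conjecture \ref{conjE}). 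A positive answer to Question \ref{q:kum} then gives $Y^\tau(K)\neq\emptyset$, and since a rational point produces $0$-cycles of every degree, the desired global $w_\tau$ exists. Note that the oddness of $d$ forces at least one piece with both $[K:k]$ and $\Delta_\tau$ odd, which in particular anchors the rational-point input and produces an odd-degree closed point on $X$.

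The step I expect to be the main obstacle is producing the global cycles $w_\tau$ on the pieces with $\Delta_\tau$ \emph{even}, for which the reduction to rational points above is unavailable. My approach would be to exploit the geometry of a (twisted) Kummer variety: the identity of the underlying abelian variety (or $2$-covering) is a canonical $k$-rational $2$-torsion point, whose image is a $k$-rational singular point of the quotient, and whose resolved exceptional locus in $Y^\tau$ supplies a global $0$-cycle of controlled even degree. One then checks that the index of each even piece divides $\Delta_\tau$, so that a global cycle of that degree exists. The delicate point is controlling this index uniformly — the exceptional locus is a Severi–Brauer variety, so in dimension greater than two its index need not divide $2$, and a careful argument (or an additional hypothesis) is needed to ensure an even-degree global cycle of the right degree. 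This, together with the compatibility of the corestriction maps across the various residue fields appearing in the support of the $0$-cycles, is where the real work lies.

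Finally, to make the descent machinery of \S\ref{subsec:LiangTorsors} applicable one must verify its structural input in the Kummer setting, namely the finiteness of $\Br_{nr}(Y^\tau_K)/\Br_0(Y^\tau_K)$ and the surjectivity of restriction under linearly disjoint extensions (the analogue of hypothesis (i) of Theorem \ref{thm: fBr}); for (twisted) Kummer varieties this follows from the known description of their Brauer groups. Granting this and the two points flagged above, assembling the $w_\tau$ and pushing forward yields a global $0$-cycle of degree $d$ on $X$, so that $Z_0^d(X)\neq\emptyset$.
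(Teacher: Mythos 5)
Your proposal has two genuine gaps, and both stem from aiming at a much stronger intermediate goal than the theorem requires. First, your treatment of the odd pieces does not work as stated: you claim that, place by place, an odd-degree local $0$-cycle orthogonal to $\Br\{2\}$ can be traded for a single local point with the same pairing values, "since $\Delta_\tau$ is a unit in $\Z_2$." This is false in general — the support of the local cycle consists of closed points with nontrivial residue field extensions of $K_w$, and $Y^\tau(K_w)$ may be empty at some places, so there is no adelic point of $Y^\tau_K$ to be had at all. The correct tool (and the one the paper uses) is Liang's construction: one builds an auxiliary finite extension $K_\tau/K$ with $[K_\tau:K]\equiv\Delta_\tau \pmod 2$ (hence odd) carrying an adelic point orthogonal to $\Br(Y^\tau_K)\{2\}$; one then needs \cite{BN21}*{Lemma 7.1} to replace $\Br(Y^\tau_K)\{2\}$ by $\Br(Y^\tau_{K_\tau})\{2\}$, and crucially \cite{CV18}*{Theorem 5.10} to pass from $2$-primary orthogonality to orthogonality to the \emph{full} Brauer group — a bridge you skip entirely, even though Question \ref{q:kum} is stated for $X(\A_L)^{\Br}$, not $X(\A_L)^{\Br\{2\}}$. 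The rational point so obtained lives over $K_\tau$, not over $K$, so it does not give you $0$-cycles of every degree over $K$ either.

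Second, the even pieces, which you correctly identify as the main obstacle of your approach and leave unresolved (the index of the exceptional Severi–Brauer locus is not controlled), are in fact a sign that the approach of globalizing \emph{every} piece to a cycle $w_\tau \in Z_0^{\Delta_\tau}(Y^\tau_K)$ is the wrong target. The paper never does this. It selects a \emph{single} piece with $[K:k]\Delta_\tau$ odd (guaranteed by $d$ odd), runs the Liang/BN21/CV18/Question \ref{q:kum} chain only on that piece to produce a closed point of odd degree $[K_\tau:K]$ on $Y^\tau_K$, and then invokes Lemma \ref{pow2}: every (twisted) Kummer variety has a global $0$-cycle of degree a power of $2$, by the period–index divisibility $P(V)\mid I(V)\mid P(V)^{2g}$ and birational invariance of the index. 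Bézout on the odd-degree point and the power-of-$2$ cycle gives a degree-$1$ cycle on $Y^\tau_K$, hence a degree-$[K:k]$ cycle on $X$. All the other pieces (whatever the parity of $\Delta_{\tau'}$) contribute only crude data: a power-of-$2$ degree cycle on $Y^{\tau'}_{K'}$ pushes forward to a cycle of degree $2^{t_{K'}}[K':k]$ on $X$, and the identity $\sum_{K'}\sum_{\tau'}[K':k]\Delta_{\tau'}=d$ together with oddness of $[K:k]$ makes the relevant gcd divide $d$, so a $\Z$-linear combination finishes the proof. No globalization in prescribed degrees $\Delta_\tau$, and no analogue of hypothesis (i) of Theorem \ref{thm: fBr} beyond what Liang's construction needs for the one chosen twist, is ever required.
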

We remark that the recent preprint \cite{Ier22} by Ieronymou should remove both the condition that $d$ be odd and the need to restrict to the 2-primary part of the Brauer group. We nonetheless give the theorem in this form, as its proof can potentially be applied to other situations.

Finally, we consider universal torsors and torsors under tori. In the rational points setting, it is well-known that, if a universal torsor $g: W \to X$ exists, then $X(\A_k)^g = X(\A_k)^{\Br_1}$ \cite{Sko01}*{Lemma 2.3.1}. In Theorem~\ref{univt}, we recover an analogous result for $0$-cycles.

\begin{thm*} [Theorem \ref{univt}] 
Let $X$ be a smooth, proper, geometrically integral variety over $k$ with $\Pic \Xbar$ finitely generated as a $\Z$-module. Suppose that a universal torsor $g: W \to X$ under some group $G$ of multiplicative type over $k$ exists. Then, for any integer $d \in \Z$, for any positive integer $n$, and for any finite subset $S' \subset \Omega_k$ of places of $k$, we have that
\begin{enumerate}
\item if $(z_v)_v \in \Z_0^d(X_{\A_k})^g $, then there exists some $(u_v)_v \in Z_0^d(X_{\A_k})^{\Br_1}$ such that $z_v$ and $u_v$ have the same image in $\CH_0(X_{k_v})/n$ for all $v \in S'$;
\item if, moreover, $\Br_1(X)/\Br_0(X)$ is finite, then $(z_v)_v \in \Z_0^d(X_{\A_k})^{\Br_1}$ implies that there exists some  $(u_v)_v \in \Z_0^d(X_{\A_k})^{g}$ such that $z_v$ and $u_v$ have the same image in $\CH_0(X_{k_v})/n$ for all $v \in S'$.
\end{enumerate}
\end{thm*}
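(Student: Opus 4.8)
The plan is to prove the two assertions as the two directions of a single inclusion, the first of which holds on the nose while the second survives only up to $n$-divisibility in the local Chow groups. For (1) I would establish the genuine set-theoretic containment $Z_0^d(X_{\A_k})^g \subseteq Z_0^d(X_{\A_k})^{\Br_1}$, from which non-emptiness transfers immediately; for (2) I would run a lifting-and-patching argument converting orthogonality to $\Br_1$ into a global twist together with a 0-cycle on it, where the residue-field growth forced by the fibres is exactly what degrades the conclusion to agreement modulo $n$.

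For (1), a general element of $Z_0^d(X_{\A_k})^g$ is the image under $\textbf{g}_{\ast, \textrm{ad}}$ of a single family of local 0-cycles on the twists $W^\tau_{K}$, so it suffices to treat one such block and fix $\alpha \in \Br_1(X)$. Using functoriality of pullback and corestriction, the local invariant attached to a lifted closed point $y \in W^\tau_{K_w}$ is $\inv_v \cores_{K_w(y)/k_v}\big((g^\tau_{K_w})^\ast\alpha\,(y)\big)$. The key input is that $g\colon W\to X$ is a \emph{universal} torsor, so $\Pic\overline{W}=0$, and since twisting does not change the geometric variety, $\Pic\overline{W^\tau}=0$ as well; hence $\Br_1(W^\tau_K)/\Br_0(W^\tau_K)\cong H^1(K,\Pic\overline{W^\tau})=0$, and $(g^\tau)^\ast$ carries $\Br_1(X_K)$ into $\Br_0(W^\tau_K)$. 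Thus $(g^\tau)^\ast\alpha$ is a constant class, the image of some $c_{\tau,\alpha}\in\Br(K)$. Evaluating at $y$ gives $\res_{K_w(y)/K}c_{\tau,\alpha}$, and a projection-formula computation (using $\cores\circ\res=[K_w(y):K_w]$ and $\inv_v\circ\cores_{K_w/k_v}=\inv_w$) collapses each block to
\[ \sum_{v\in\Omega_k}\inv_v\Big(\sum_{w\mid v}\Delta_\tau\,\cores_{K_w/k_v}\res_{K_w/K}c_{\tau,\alpha}\Big)=\Delta_\tau\sum_{w\in\Omega_K}\inv_w(c_{\tau,\alpha})=0 \]
by global reciprocity for $c_{\tau,\alpha}\in\Br(K)$. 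Summing over $K\in S$ and $\tau$ yields $0$, so the block is orthogonal to $\alpha$; as $\alpha\in\Br_1(X)$ was arbitrary, the containment follows.

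For (2), I would first use the finiteness of $\Br_1(X)/\Br_0(X)$ to reduce orthogonality to finitely many representatives. Since the type of a universal torsor is the identity $\widehat{G}=\Pic\Xbar$, the Colliot-Th\'el\`ene--Sansuc formula (in its 0-cycle form, obtained by applying the rational-point formula over each residue field and corestricting) identifies the isomorphism $\Br_1(X)/\Br_0(X)\cong H^1(k,\widehat{G})$ with a pairing: for $(z_v)_v$ the condition of lying in $Z_0^d(X_{\A_k})^{\Br_1}$ translates into the orthogonality of the adelic family of fibre classes $\partial_v(z_v):=\sum_x n_x\,\cores_{k_v(x)/k_v}[W_x]\in H^1(k_v,G)$ to all of $H^1(k,\widehat{G})$ under the sum of local Tate pairings. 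By Poitou--Tate duality for the group $G$ of multiplicative type, this forces $(\partial_v(z_v))_v$ to come from a global class $\tau\in H^1(k,G)$, up to the divisibility permitted by the finite-coefficient form of the duality and with the deviation controlled at the finite set $S'$.

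It then remains to realise this global $\tau$ by an actual element of the descent set, i.e.\ to produce $(u_v)_v\in Z_0^d(W^\tau_{\A_k})$ (the single-field, single-twist block $S=\{k\}$, $T_k=\{\tau\}$, $\Delta_\tau=d$) whose pushforward matches $(z_v)_v$ in $\CH_0(X_{k_v})/n$ for $v\in S'$. Here lies the main obstacle: although each closed point of $X_{k_v}$ lifts to a closed point of $W^\tau$, the fibre $W^\tau_x$ need not be split, so a lift acquires a residue-field extension and its pushforward is the original point multiplied by the residue degree. Matching the prescribed degree $d$ \emph{and} the Chow class of $z_v$ can therefore only be arranged modulo $n$, and reconciling these local adjustments with the global consistency of the single twist $\tau$ (while preserving approximation at $S'$) is the crux of the argument. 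This obstruction is precisely why one recovers only the weakened statement rather than the clean equality $X(\A_k)^g=X(\A_k)^{\Br_1}$ available for rational points.
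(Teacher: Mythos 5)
Your part (1) is correct, and it takes a genuinely different route from the paper: you prove the honest containment $Z_0^d(X_{\A_k})^g \subseteq Z_0^d(X_{\A_k})^{\Br_1}$, using that a universal torsor (and any of its twists, over any finite extension $K$) satisfies $\kbar[\overline{W^\tau}]^\times = \kbar^\times$ and $\Pic \overline{W^\tau} = 0$, hence $\Br_1(W^\tau_K) = \Br_0(W^\tau_K)$; then the formal pushforward--pullback computation (exactly as in Proposition \ref{prop:etBrSubsetBr}) plus global reciprocity for the constant class $c_{\tau,\alpha} \in \Br(K)$ kills each block. This is cleaner and strictly stronger than what the paper proves: the paper does \emph{not} establish the containment, but instead constructs a possibly different adelic $0$-cycle $(u_v)_v$ orthogonal to $\Br_1$, by running Liang's technique on each lifted family to manufacture adelic points over auxiliary extensions $L_\tau/K$ and then invoking Theorem \ref{univtpt} over $L_\tau$. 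What the paper's longer route buys is uniformity with the proof of (2) and, when $g$ is proper, the approximation refinement recorded in the remark after the theorem; what your route buys is a sharper statement of (1) with far less machinery.

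Part (2), however, has a genuine gap, and you have located it yourself: the step you call ``the crux'' --- realising the global class $\tau$ by an actual element of the descent set of degree $d$ agreeing with $(z_v)_v$ in $\CH_0(X_{k_v})/n$ at $S'$ --- is never carried out, and it is precisely where all the difficulty sits. Worse, the shape you impose on the sought element ($S=\{k\}$, $T_k=\{\tau\}$, $\Delta_\tau=d$, i.e.\ a single field and a single twist) is likely too rigid to work: even when the corestricted fibre classes glue to a global $\tau$, an individual closed point $x_v$ in the support of $z_v$ need not lift to $W^\tau$ without a residue-field extension, and the lift then pushes forward to $[\,\text{residue degree}\,]\cdot x_v$; these uncontrolled multiplicities wreck both the degree and the class in $\CH_0(X_{k_v})/n$, and nothing in your Poitou--Tate setup controls them. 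The paper's proof avoids lifting $(z_v)_v$ altogether. It uses the finiteness of $\Br_1(X)/\Br_0(X)$ (which forces $\Pic\Xbar$ torsion-free) to get, \`a la Liang, a restriction isomorphism $\Br_1 X/\Br_0 X \to \Br_1(X_l)/\Br_0(X_l)$ for $l$ linearly disjoint from a trivialising field $K$; it then constructs an extension $l/k$ with $[l:k]\equiv d \pmod{n\delta_x}$ and an adelic \emph{point} $(\tilde{x}_{w'})_{w'} \in X(\A_l)^{\Br_1(X_l)}$ approximating $(z_v)_v$ at $S'$, converts membership in the $\Br_1$-set into membership in the descent set via Theorem \ref{univtpt} over $l$ and over $k(x)$ (where $x$ is an auxiliary closed point of $X$), and finally assembles the element of $Z_0^d(X_{\A_k})^g$ using the \emph{two}-field, two-twist datum $S=\{l,k(x)\}$, $\Delta_\tau = 1$, $\Delta_\sigma = -\lambda n$. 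The flexibility of the descent-set definition (several fields and twists at once) is exactly what makes the degree bookkeeping and the mod-$n$ matching possible; your single-twist formulation discards that flexibility, which is why your sketch cannot be closed as stated.
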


Similarly, for torsors under tori we obtain the following result, in the spirit of a result by Harpaz and Wittenberg (see \cite{HW}*{Th\'eor\`eme 2.1}).

\begin{thm*}[Theorem \ref{hw0cyc}]
Let $X$ be a smooth, proper, geometrically integral variety over $k$. Let $f: Y \to X$ be a torsor under a $k$-torus $T$. Assume that $\Br X/\Br_0 X$ is finite and that there is some finite extension $F/k$ such that $\res_{l/k} : \Br X/ \Br_0 X \to \Br(X_l)/ \Br_0(X_l)$ is surjective for all finite extensions $l/k$ linearly disjoint from $F$ over $k$. Then, for any integer $d \in \Z$, for any positive integer $n$, and for any finite subset $S' \subset \Omega_k$ of places of $k$, we have that  $(z_v)_v \in \Z_0^d(X_{\A_k})^{\Br}$ implies that there exists some  $(u_v)_v \in \Z_0^d(X_{\A_k})^{f, \Br_{nr}}$ such that $z_v$ and $u_v$ have the same image in $\CH_0(X_{k_v})/n$ for all $v \in S'$.
\end{thm*}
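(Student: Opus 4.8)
The plan is to reduce the statement about $0$-cycles to a statement about rational points over varying finite extensions, in the spirit of Liang's method as already developed in \S\ref{subsec:LiangTorsors}, and then to invoke the descent machinery for rational points (the torus analogue of Harpaz--Wittenberg) extension-by-extension. Concretely, I would start with a given adelic $0$-cycle $(z_v)_v \in Z_0^d(X_{\A_k})^{\Br}$ and the prescribed data $n$ and $S' \subset \Omega_k$. The target is to produce $(u_v)_v \in Z_0^d(X_{\A_k})^{f,\Br_{nr}}$ agreeing with $(z_v)_v$ modulo $n$ in $\CH_0(X_{k_v})$ for $v \in S'$. Since $\CH_0(X_{k_v})/n$ only records the class of each local $0$-cycle modulo rational equivalence and $n$, I would first decompose each $z_v$ into its supported closed points and, after a harmless modification (adding a multiple of $n$ times a fixed degree-$1$ local cycle, or using a moving lemma to keep the degree), arrange that the local cycles are supported on closed points whose residue fields appear among a single finite set $S$ of finite extensions of $k$; this is exactly the kind of bookkeeping that the definition of $Z_0^d(X_{\A_k})^{f,\Br_{nr}}$ is built to accommodate.

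The key mechanism is the following: a closed point $x$ of $X$ with residue field $K = k(x)$ is the same as a $K$-rational point $x_K \in X(K)$, and a degree-$d$ $0$-cycle is a $\Z$-linear combination of such points over various $K$. So the hypothesis $(z_v)_v \in Z_0^d(X_{\A_k})^{\Br}$ should be repackaged, via corestriction, as a family of adelic constraints on $X_K$ for the finitely many fields $K \in S$. Then for each $K \in S$ I would apply the rational-points result (Harpaz--Wittenberg, \cite{HW}*{Th\'eor\`eme 2.1}) to the torsor $f_K : Y_K \to X_K$ under the torus $T_K$: the Brauer-Manin condition on $X_K$ implies that the adelic points can be lifted, through the descent-with-unramified-Brauer set $X_K(\A_K)^{f_K, \Br_{nr}}$, to a nearby adelic point of $X_K$ compatible with the torsor structure, where ``nearby'' is controlled at the finitely many places lying above $S'$. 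The finiteness and surjectivity hypotheses on $\Br X/\Br_0 X$ under restriction are precisely what is needed to guarantee that the Brauer condition over $k$ propagates to a Brauer condition over each $K \in S$ (and over the $L$ appearing in the descent set), so that Harpaz--Wittenberg applies uniformly; this is the analogue of hypothesis (i) in Theorem \ref{thm: fBr}.

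Assembling the lifted local rational points over each $K \in S$ back into a global adelic $0$-cycle is the step that produces $(u_v)_v$, and checking that it lands in $Z_0^d(X_{\A_k})^{f,\Br_{nr}}$ amounts to unwinding the definition of that set: one exhibits the set $S$, the finite subsets $T_K \subset H^1(K,T)$ arising from the torsor classes of the lifts, and the degrees $\Delta_\tau$, then verifies that $\mathbf{f}_{\ast,\mathrm{ad}}$ carries the product of the local cycles to $(u_v)_v$ and that the unramified Brauer condition holds place-by-place. The degree condition $\sum_{K \in S}[K:k]\sum_{\tau} \Delta_\tau = d$ must be maintained throughout, which is why the moving/modification step in the first paragraph has to preserve total degree. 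The main obstacle, I expect, is the compatibility of the mod-$n$ and $\CH_0$ conditions with the torsor lifting: Harpaz--Wittenberg controls the lift only at places above a chosen finite set, and only up to an approximation that must be reconciled with rational equivalence modulo $n$ in $\CH_0(X_{k_v})$. Converting ``approximation in the adelic topology over $K$'' into ``same image in $\CH_0(X_{k_v})/n$ over $k$'' requires the continuity of the pushforward $s_{K_w \ast}$ on Chow groups together with the fact that, at a fixed place, points that are $v$-adically close enough lie in the same class modulo $n$ (a consequence of the local structure of $\CH_0$ of smooth projective varieties over local fields). Making this reconciliation precise, uniformly over the finitely many $K \in S$ and $v \in S'$, is where the real work lies; the rest is the combinatorial translation between $0$-cycles and rational points that the definitions have been engineered to make routine.
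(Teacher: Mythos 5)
Your high-level plan (a Liang-style reduction to rational points over a finite extension, then the Harpaz--Wittenberg theorem, then reassembly through $\mathbf{f}_{\ast,\mathrm{ad}}$) is the same skeleton as the paper's, which proves Theorem \ref{hw0cyc} by rerunning the proof of Theorem \ref{univt}(2) with Theorem \ref{hw} in place of Theorem \ref{univtpt}. However, two of your intermediate steps are wrong as stated, and they are where the content lies. First, your opening move --- modifying the $z_v$ so that the local cycles are ``supported on closed points whose residue fields appear among a single finite set $S$ of finite extensions of $k$'', and then ``repackaging via corestriction'' the Brauer condition as adelic conditions on $X_K$ for $K \in S$ --- is not a meaningful operation: each $z_v$ is a cycle on $X_{k_v}$, so its points have residue fields that are finite extensions of $k_v$, and an adelic $0$-cycle over $k$ does not arise from adelic points of $X_K$ for any finite set of global fields $K$ that could be read off from it. Producing a global extension $l/k$, linearly disjoint from $F$, together with an adelic \emph{point} of $X_l$ orthogonal to $\Br(X_l)$ whose pushforward approximates $(z_v)_v$ at the places above $S'$, is precisely the nontrivial content of Liang's fibration argument \cite{Lia13}, and it is there (not in any later ``propagation'') that the finiteness of $\Br X/\Br_0 X$ and the surjectivity of $\res_{l/k}$ are consumed. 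Second, you have no mechanism for hitting degree exactly $d$: Liang's construction controls $[l:k]$ only modulo $n\delta_x$, where $x$ is an auxiliary closed point of $X$ of degree $\delta_x$ fixed in advance, so one must form $u_v = \sum_{w'\mid v} s_{l_{w'}/k_v}(\tilde{x}_{w'}) - \lambda n \sum_{w \mid v} s_{k(x)_w/k_v}((x)_w)$; moreover the correction term must itself be placed in the descent set, which the paper does by applying Theorem \ref{hw} over $k(x)$ to the diagonal adelic point $((x)_w)_w \in X(\A_{k(x)})^{\Br}$ of the global point $x$. Your substitute (``adding a multiple of $n$ times a fixed degree-$1$ local cycle'') does not do this: such local cycles need not exist, and a purely local modification cannot be exhibited as part of the data $(S, T_K, (\Delta_\tau))$ that defines membership in $Z_0^d(X_{\A_k})^{f,\Br_{nr}}$.

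Finally, the issue you single out as ``where the real work lies'' is not an issue at all: Theorem \ref{hw} is an \emph{exact} lifting statement, $X(\A_l)^{A} \subset \bigcup_\sigma f^\sigma(Y^\sigma(\A_l)^{\Br_{nr}})$, so the adelic point over $l$ produced by Liang's step is, on the nose, the image of an adelic point of some twist $Y^\tau$ orthogonal to $\Br_{nr}$; no approximation is lost at this stage, and the choices $S = \{l, k(x)\}$, $T_l = \{\tau\}$, $T_{k(x)} = \{\sigma\}$, $\Delta_\tau = 1$, $\Delta_\sigma = -\lambda n$ then witness $(u_v)_v \in Z_0^d(X_{\A_k})^{f,\Br_{nr}}$ directly. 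The only approximation in the entire proof is Liang's, and converting it into equality of classes in $\CH_0(X_{k_v})/n$ for $v \in S'$ is the same standard step already used in the proof of Theorem \ref{thm: fBr}; it does not need the additional ``continuity of pushforward'' analysis you anticipate.
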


\subsection{Notation and terminology}
Let $k$ be a number field and let $\kbar$ denote a fixed algebraic closure of $k$; we will take any finite extension of $k$ to be inside $\kbar$. Let $\Omega_k$ denote the set of non-trivial places of $k$ and $\A_k$ the ring of adeles of $k$. For a $k$-scheme $X$, write $X_K := X \times_{\Spec k} \Spec K$ for the base change of $X$ to the extension $K/k$, and $\Xbar := X_{\kbar}$. A variety over $k$ is defined as a separated scheme of finite type over $k$. For a closed point $x$ of a $k$-scheme $X$, let $k(x)$ denote the residue field (over $k$). The Brauer group $\Br(X) := H^2_{\et}(X,\G_m)$ of a $k$-variety $X$ is equipped with a natural filtration, $\Br_0(X) \subset \Br_1(X) \subset \Br(X)$, where $\Br_0(X) := \im(\Br(k) \to \Br(X))$ and $\Br_1(X) := \ker(\Br X \to \Br \Xbar)$.
For an abelian group $A$ and an integer $d > 0$, we let $A[d]$ denote the $d$-torsion subgroup of $A$. When $d$ is prime, let $A\{d\}$ denote the $d$-primary part $\varprojlim_n A[d^n]$ of $A$.

\section{A partition for global 0-cycles using torsors} \label{sec: global}
Let $X$ be a variety over a number field $k$. Throughout this section, we fix $g \colon Y \to X$ to be a $G$-torsor for some linear algebraic group $G$ over $k$ and a $Y$ smooth $k$-variety. In this section, we generalise the standard partition of $X(k)$ using the torsor $g: Y \to X$ to 0-cycles. This partition will allow us, in Section \ref{sec: local}, to define the descent set $Z_0^d(X(\A_k))^g$ and the étale-Brauer set $Z_0^d(X(\A_k))^{\et, \Br}$ \ for 0-cycles, analogous to the corresponding sets $X(\A_k)^g$ and $X(\A_k)^{\et,\Br}$ for rational points.

Before we are able to give the definition of the descent set for 0-cycles (see Definition \ref{def1}), let us try to motivate a bit how the construction arises by looking first at what happens for rational points. For rational points, a standard result from the theory of torsors (see e.g. \cite[p.22]{Sko01}) tells us that, using the torsor $g: Y \to X$,  the set of $k$-rational points of $X$ can be partitioned as
\begin{equation} \label{partition} X(k) = \coprod_{\tau \in H_{\et}^1(k,G)} g^\tau (Y^\tau(k)),\end{equation}
where $g^\tau : Y^\tau \to X$ is the $G^\tau$-torsor over $X$ obtained by twisting $g: Y \to X$ by $\tau$. Since $Y^\tau(k) \subset Y^\tau(\A_k)$, we then get
\[ X(k) =\coprod_{\tau \in H_{\et}^1(k,G)} g^\tau (Y^\tau(k)) \subset  \bigcup_{\tau \in H^1_{\et}(k,G)} g^\tau(Y^\tau(\A_k)) =:X(\A_k)^g,\]
where $X(\A_k)^g$ is the $g$-descent set. The main goal of this section is to generalise the partition \eqref{partition} in the context of 0-cycles.
In Definition \ref{def}, we construct a set $Z_0^d(X)^g$ that is the analogue for 0-cycles of the set $\coprod_{\tau \in H_{\et}^1(k,G)} g^\tau (Y^\tau(k))$ in \eqref{partition}: in Proposition \ref{prop:partition}, we indeed prove that  $Z_0^d(X) = Z_0^d(X)^g$, thus yielding the analogue for global 0-cycles of the partition \eqref{partition}. In  Section \ref{sec: local}, we then exploit this partition for 0-cycles in order to define the $g$-descent $Z_0^d(X_{\A_k})^g$ for 0-cycles (see Definition \ref{def1}). 

The natural motivation behind the construction of the set $Z_0^d(X)^g$ -- which, at first sight, might appear a bit abstract -- lies completely in the proof of Proposition \ref{prop:partition}: given a 0-cycle $z \in Z_0^d(X)$, the points in $\supp(z)$ are rational points over their respective residue fields; hence, by using the partition \eqref{partition} and grouping together the points in $\supp(z)$ with a same residue field $L$ and a same pullback class $\tau \in H^1_{\et}(L,G)$ of $[g: Y \to X] \in H^1(X,G)$, we can decompose $z$ as a collection of ``scattered" 0-cycles, whose degrees are compatible in a certain explicit way, living on (potentially) different $Y^\tau_L$'s, for some extensions $L/k$ and $\tau \in H_{\et}^1(L, G)$ depending on $\supp(z)$ only. We can then use the ``recombining" map $\textbf{g}_\ast$ (defined in Definition \ref{def:recombining}) to recombine these ``scattered" 0-cycles into $z$. This shows that any 0-cycle in $Z_0^d(X)$ is in $Z_0^d(X)^g$, and yields the very natural justification for all the objects and conditions appearing in the definition of $Z_0^d(X)^g$.

Let us now delve into the construction.

\begin{defn}
Let $k$ be a number field and let $\kbar$ be a fixed separable closure of $k$.
We let 
\[ \mathfrak{F}_{k} := \{ K \subset \kbar: \textrm{ $K$ is a finite field extension of $k$}\}.\]
\end{defn}

Recall that if $z := \sum n_y y \in Z_0(Y)$, the \emph{pushforward} of $z$ is \[ g_\ast(z) = \sum_{y \in Y} n_y g_\ast(y)  = \sum_{y \in Y} n_y [k(y) : k(g(y))] g(y) \in Z_0(X).\] 

\begin{defn} \label{def:recombining} We define the ``recombining" map 
\[ \textbf{g}_\ast \colon  \coprod_{\substack{S \subset \frakF_k\\ \textup{$S\neq \emptyset$ finite}} }  \coprod_{\substack{(T_L \subset H^1(L,G))_{L \in S} \\ \textup{$T_L\neq \emptyset$ finite for all $L \in S$}}}  \prod_{L \in S} \prod_{\tau \in T_L} Z_0(Y^\tau_L) \to Z_0(X) \]
as follows. Let 
\[z := \left(\left(\sum_{y \in Y_{L}^\tau} n_y y \right)_{\tau \in T_L} \right)_{L \in S} \in \coprod_{\substack{S \subset \frakF_k\\ \textup{$S\neq \emptyset$ finite}} }  \coprod_{\substack{(T_L \subset H^1(L,G))_{L \in S} \\ \textup{$T_L\neq \emptyset$ finite for all $L \in S$}}}  \prod_{L \in S} \prod_{\tau \in T_L} Z_0(Y^\tau_L).\]
Then $\textbf{g}_\ast(z)$ is the $0$-cycle on $X$ given by 
\begin{align*} 
\textbf{g}_\ast(z) & =\sum_{L \in S} \sum_{\tau \in T_L} \sum_{y \in Y_{L}^\tau} n_y (s_{L} \circ g_{L}^\tau)_\ast(y) \\
& = \sum_{L \in S}\sum_{\tau \in T_L} \sum_{y \in Y_{L}^\tau} n_y [ L(y) : k( s_{L} \circ g_{L}^\tau(y) ) ] (s_{L} \circ g_{L}^\tau)(y),
\end{align*}
where  $g_{L}^\tau \colon Y^\tau_{L} \to X_L$ and $s_{L} \colon X_{L} \to X$ are the natural morphisms.
\end{defn}

The map $\textbf{g}_\ast$ is compatible with degrees of 0-cycles in the following way.

\begin{lemma} \label{lem: degrees} Fix an integer $d$. Then we have a map
\[ \mathbf{g}_\ast \colon \coprod_{S \subset \frakF_k}  \coprod_{(T_L \subset H^1(L,G))_{L \in S}}   \coprod_{\substack{\left((\Delta_\tau)_{\tau \in T_L}\right)_{L \in S}: \\ \Delta_\tau \in \Z \textrm{ for all $\tau$}, \\ \sum_{L \in S} \sum_{\tau \in T_L} \Delta_\tau [L:k]= d}} \prod_{L \in S} \prod_{\tau \in T_L} Z_0^{\Delta_\tau}(Y^\tau_L)  \to Z_0^d(X).\]
\end{lemma}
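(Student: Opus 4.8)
The plan is to verify directly that $\mathbf{f}_\ast$ sends a tuple whose components are $0$-cycles of the prescribed degrees to a $0$-cycle of degree $d$ on $X$; the only input needed is the multiplicativity of field-extension degrees in a tower, applied to the pushforward formula recalled before the previous definition.

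First I would fix a tuple $z = \left(\left(\sum_{y} n_y y\right)_{\tau \in T_K}\right)_{K \in S}$ lying in the indicated restricted source, so that for each $K \in S$ and each $\tau \in T_K$ the component $\sum_{y} n_y y$ is a $0$-cycle of degree $\Delta_\tau$ on $Y^\tau_K$, i.e. $\sum_{y} n_y [K(y):K] = \Delta_\tau$, subject to $\sum_{K \in S} [K:k]\sum_{\tau \in T_K} \Delta_\tau = d$. Writing $x := (s_K \circ f^\tau_K)(y)$ for the image closed point on $X$, the definition of $\mathbf{f}_\ast$ attaches to $x$ the multiplicity $n_y [K(y):k(x)]$.

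Then I would compute $\deg \mathbf{f}_\ast(z)$ by summing $n_y [K(y):k(x)]\,[k(x):k]$ over all $K, \tau, y$. The morphism sending $y$ to $x$ induces an embedding $k(x) \hookrightarrow K(y)$, so the tower law gives $[K(y):k(x)]\,[k(x):k] = [K(y):k]$, collapsing the pushforward multiplicity together with the degree of the image point into $[K(y):k]$. A second application of the tower law, $[K(y):k] = [K(y):K]\,[K:k]$, lets me factor out $[K:k]$. Regrouping the sums over $K$ and $\tau$ and using $\sum_{y} n_y [K(y):K] = \Delta_\tau$ then yields $\deg \mathbf{f}_\ast(z) = \sum_{K \in S} [K:k]\sum_{\tau \in T_K} \Delta_\tau = d$, as required.

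There is no serious obstacle here: the statement is a bookkeeping compatibility of the map defined above with the degree grading, and its entire content is the two tower-law identities used above. The only point requiring a moment's care is checking that the residue field $k(x)$ of the image point genuinely embeds into $K(y)$, so that the multiplicity $[K(y):k(x)]$ is a well-defined positive integer; this is automatic, since $x$ is the image of $y$ under a morphism of schemes, and it is precisely the fact that makes the displayed pushforward formula meaningful in the first place.
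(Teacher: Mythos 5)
Your proposal is correct and follows essentially the same argument as the paper's proof: both compute $\deg \mathbf{f}_\ast(z)$ directly, collapse the pushforward multiplicity $[K(y):k(x)]$ against $[k(x):k]$ via the tower law, factor $[K(y):k] = [K(y):K][K:k]$, and then invoke the definition of the $\Delta_\tau$ together with the constraint $\sum_{K \in S}[K:k]\sum_{\tau \in T_K}\Delta_\tau = d$. Your closing remark about the embedding $k(x) \hookrightarrow K(y)$ is a sound observation that the paper leaves implicit.
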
 
\begin{proof} 
Let 
\[z := \left(\left(\sum_{y \in Y_{L}^{\tau}} n_y y\right)_{\tau \in T_L}\right)_{L\in S} \in \coprod_{S \subset \frakF_k}  \coprod_{(T_L \subset H^1(L,G))_{L \in S}}   \coprod_{\substack{\left((\Delta_\tau)_{\tau \in T_L}\right)_{L \in S}: \\ \Delta_\tau \in \Z \textrm{ for all $\tau$}, \\ \sum_{L \in S} \sum_{\tau \in T_L} \Delta_\tau [L:k]= d}} \prod_{L \in S} \prod_{\tau \in T_L} Z_0^{\Delta_\tau}(Y^\tau_L).\] 
Then, by definition of $\textbf{g}_\ast$, we have that
\[ \textbf{g}_\ast(z) = \sum_{L \in S}\sum_{\tau \in T_L} \sum_{y \in Y^\tau_{L}} n_y [L(y): k(s_{L} \circ g_{L}^{\tau}(y))]  (s_{L} (g_{L}^{\tau}(y))).\]

Therefore,
\begin{align*}
\deg(\textbf{g}_\ast(z)) &= \sum_{L \in S}\sum_{\tau \in T_L}  \sum_{y \in Y^\tau_{L}} n_y [L(y): k(s_{L} (g_{L}^{\tau}(y)))][k(s_{L} (g_{L}^{\tau}(y))):k] \\
 &= \sum_{L \in S}\sum_{\tau \in T_L}  \sum_{y \in Y^\tau_{L}} n_y [L(y): k] \\
& = \sum_{L \in S}\sum_{\tau \in T_L} \sum_{y \in Y^\tau_{L}}  n_y [L(y): L] [L : k] \\ 
&= \sum_{L \in S}\sum_{\tau \in T_L} [L : k]  \sum_{y \in Y^\tau_{L}} n_y [L(y): L] \\
& = \sum_{L \in S}\sum_{\tau \in T_L} [L : k] \Delta_\tau\\
&= d,
\end{align*}
where the fifth equality follows from the fact that the inner summation represents the degree of the 0-cycle $\sum_{y \in Y^{\tau}_{L}} n_y y$, which is by definition $\Delta_\tau$, and the final equality is by definition of the integers $\Delta_\tau$.
\end{proof} 

\begin{defn} \label{def} Let $d \in \Z$. The \defi{global descent set of degree $d$ of $X$ associated to $g$} is
\[ Z_0^d(X)^g := \bigcup_{\substack{S \subset \frakF_k\\ \textup{$S\neq \emptyset$ finite}} } \textbf{g}_\ast \left(  
  \coprod_{\substack{(T_L \subset H^1(L,G))_{L \in S} \\ \textup{$T_L\neq \emptyset$ finite for all $L \in S$}}} 
 \coprod_{\substack{\left((\Delta_\tau)_{\tau \in T_L}\right)_{L \in S}: \\ \Delta_\tau \in \Z \textrm{ for all $\tau$}, \\ \sum_{L \in S} \sum_{\tau \in T_L} \Delta_\tau [L:k]= d}} \prod_{L \in S} \prod_{\tau \in T_L} Z_0^{\Delta_\tau}(Y^\tau_L)  \right).\]
\end{defn}

The following is the 0-cycles analogue of the partition \eqref{partition}.

\begin{prop} \label{prop:partition} There is an equality of sets, $Z_0^d(X)^g = Z_0^d(X)$.
\end{prop}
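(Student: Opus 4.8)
The plan is to prove the two inclusions separately. The inclusion $Z_0^d(X)^f \subseteq Z_0^d(X)$ is immediate from Lemma \ref{lem: degrees}: on each piece of the indexing datum appearing in Definition \ref{def}, the map $\mathbf{f}_\ast$ takes values in $Z_0^d(X)$, and $Z_0^d(X)^f$ is by definition a union of such images. So all of the content lies in the reverse inclusion $Z_0^d(X) \subseteq Z_0^d(X)^f$.

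For the reverse inclusion I would start with an arbitrary $z = \sum_x n_x\, x \in Z_0^d(X)$, a finite sum over closed points $x$ with residue field $k(x)$ and $n_x \in \Z$, satisfying $\sum_x n_x[k(x):k] = d$. The strategy is to lift each closed point through an appropriate twist of $f$ and then reassemble the lifts into a single element of the domain of $\mathbf{f}_\ast$. Fix a closed point $x$ in the support and set $K := k(x)$. The inclusion $\Spec k(x) \to X$ determines, via the universal property of $X_K = X \times_k K$, a canonical $K$-point $\tilde x \in X_K(K)$. The key input is the descent identity for rational points applied over $K$ rather than over $k$ (see \cite{Sko01}*{Ch.\ 2}), namely $X_K(K) = \bigcup_{\tau \in H^1(K,G)} f_K^\tau\big(Y_K^\tau(K)\big)$. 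Since $\tilde x \in X_K(K)$, there exist some $\tau_x \in H^1(K,G)$ and a point $\tilde y_x \in Y_K^{\tau_x}(K)$ with $f_K^{\tau_x}(\tilde y_x) = \tilde x$.

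Next I would compute the relevant pushforward. Since $\tilde y_x$ is a $K$-point, $K(\tilde y_x) = K$, and $(s_K \circ f_K^{\tau_x})(\tilde y_x) = s_K(\tilde x) = x$ with $k(x) = K$; hence the multiplicity $[K(\tilde y_x) : k(x)] = 1$ and $\mathbf{f}_\ast(\tilde y_x) = x$. To assemble the global datum, let $S$ be the finite nonempty set of residue fields $k(x)$ occurring in the support of $z$; for each $K \in S$ let $T_K$ be the finite nonempty set of classes $\tau_x$ arising from points $x$ with $k(x) = K$; and for each $\tau \in T_K$ set $z_\tau := \sum_{x:\, k(x)=K,\, \tau_x=\tau} n_x\, \tilde y_x \in Z_0(Y_K^\tau)$, of degree $\Delta_\tau := \sum_{x:\, k(x)=K,\, \tau_x=\tau} n_x$. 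By additivity of $\mathbf{f}_\ast$ together with the pointwise computation, $\mathbf{f}_\ast\big((z_\tau)_{\tau,K}\big) = \sum_x n_x\, x = z$, and the degree constraint $\sum_{K\in S}[K:k]\sum_{\tau\in T_K}\Delta_\tau = d$ unwinds to $\sum_x n_x[k(x):k] = \deg(z) = d$, so the assembled datum lies in the indexing set of Definition \ref{def} and $z \in Z_0^d(X)^f$. The only degenerate case is $z = 0$ (possible when $d = 0$), which I would represent by the trivial datum $S = \{k\}$, $T_k = \{0\}$, and the zero $0$-cycle on $Y$.

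I expect the main obstacle to be the bookkeeping in the pushforward step: verifying that the canonical lift of each closed point pushes forward to that point with multiplicity exactly $1$, rather than acquiring a spurious residue-degree factor, and correctly invoking the descent identity over each residue field $K$ in place of $k$. Once these are in place, additivity of pushforward and the degree computation already carried out in Lemma \ref{lem: degrees} make the assembly and the degree verification routine.
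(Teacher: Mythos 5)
Your proposal is correct and follows essentially the same route as the paper's proof: the forward inclusion via Lemma \ref{lem: degrees}, then lifting each closed point $x$ to a $k(x)$-rational point of a twist $Y^{\tau}_{k(x)}$ using classical descent over the residue field, checking the pushforward has multiplicity $1$, and reassembling the lifts grouped by pairs $(K,\tau)$ with the same degree bookkeeping. Your use of the canonical point $\tilde x \in X_K(K)$ coming from the universal property (rather than the paper's ``any closed point of $X_{k(x)}$ projecting to $x$'') and your explicit treatment of the degenerate case $z=0$ are minor refinements, not a different argument.
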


\begin{proof} The forward containment is the content of Lemma \ref{lem: degrees}.
For the reverse inclusion, let $z := \sum \limits_{x \in X} n_x x \in Z_0^d(X)$, and suppose that 
$\{x_1, \dots, x_r \} := \supp(z).$
By the classical properties of torsors for rational points (see \cite{Sko01}*{\S 5.3}), for each $x_i$ we take any closed point $x_i'$ of $X_{k(x_i)}$ projecting to $x_i$, and we have the partition
\[ x_i' \in X(k(x_i)) = \coprod_{\sigma \in H^1(k(x_i), G)} g^\sigma_{k(x_i)}\left( Y^\sigma_{k(x_i)}(k(x_i))\right).\]

Hence there exists some (unique) $\sigma_i \in H^1(k(x_i),G)$ such that $x_i$ lifts to some $y_i \in Y^{\sigma_i}_{k(x_i)}(k(x_i))$, which we fix for each $x_i$ and which we can consider as closed points on $Y^{\sigma_i}_{k(x_i)}$. Note that $k(x_i)(y_i) = k(x_i)$, since $ k(x_i)(y_i) \subset k(x_i)$ as $y_i$ is a $k(x_i)$-point.
So, if we consider the morphism
\[ s_{k(x_i)} \circ g_{k(x_i)}^{\sigma_i} \colon Y_{k(x_i)}^{\sigma_i} \to X_{k(x_i)} \to X, \]
then the pushforward of $y_i \in Z_0(Y^{\sigma_i}_{k(x_i)})$ is
\begin{align*}(s_{k(x_i)} \circ g_{k(x_i)}^{\sigma_i})_{\ast}(y_i) &= [k(x_i)(y_i) : k(s_{k(x_i)} \circ g_{k(x_i)}^{\sigma_i}(y_i)) ] x_i \\
& = [k(x_i)(y_i):k(x_i)] x_i \\
& = x_i \in Z_0(X). 
\end{align*}

Consider the set \[M :=\{(k(x_i), \sigma_i) : i = 1, ..., r\}  = \{ (L_1, \tau_1), \dots, (L_s, \tau_s)\},\]
where $L_i \in \{k(x_1), \dots, k(x_r)\}$ and $\tau_i \in \{\sigma_1, ..., \sigma_r\}$.
We partition the set $\{x_1, \dots, x_r\}$ as follows. The points $x_i$ and $x_j$ with $i \ne j$ belong to the same partition set $P_{(L, \tau)}$ if and only if $L = k(x_i) = k(x_j)$ and $\tau = \sigma_i = \sigma_j$. We write 
\[ \{x_1, \dots, x_r\} = \bigsqcup_{i=1}^s P_{(L_i, \tau_i)}.\]
For each $(L_i, \tau_i) \in M$, define
\[ \Delta_{(L_i, \tau_i)} := \sum_{x \in P_{(L_i, \tau_i)}} n_x [L_i(y) : L_i] = \sum_{x \in P_{(L_i, \tau_i)}} n_x,\]
where we recall that $y$ denotes the fixed lift of $x$ and the $n_x$ are the coefficients of the support of the $0$-cycle $z \in Z_0^d(X)$. Then, for each $(L_i, \tau_i)$, we have
\[ \sum_{x \in P_{(L_i, \tau_i)}} n_x y \in Z_0^{\Delta_{(L_i, \tau_i)}}(Y_{L_i}^{\tau_i}). \]
Let $S:= \{L_1, ..., L_s\}$. For each $L \in S$, we let $T_{L} := \{ \tau : (L, \tau) \in M\}$.
Consider the tuple of $0$-cycles 
\[ w := \left( \sum \limits_{x \in P_{(L_i, \tau_i)}} n_x y \right)_{ i \in \{1, \dots, s\}} = \left(\left( \sum \limits_{x \in P_{(L, \tau)}} n_x y \right)_{\tau \in T_L}\right)_{ L \in S}.\]
We claim that  $\textbf{g}_\ast(w) = z$. Indeed, we have
\[ \textbf{g}_\ast(w) = \sum_{i = 1}^s \sum_{x \in T_{(L_i,\tau_i)}} n_x (s_{L_i} \circ g_{L_i}^{\tau_i})_\ast(y) \, = \sum_{x \in \supp(z)} n_x x = z. \]
Moreover, we claim that
$w$ is in 
\[ 
\coprod_{\substack{S \subset \frakF_k\\ \textup{$S\neq \emptyset$ finite}} }  \coprod_{\substack{(T_L \subset H^1(L,G))_{L \in S} \\ \textup{$T_L\neq \emptyset$ finite for all $L \in S$}}} 
\coprod_{\substack{\left((\Delta_\tau)_{\tau \in T_L}\right)_{L \in S} \\ \sum_{L \in S} \sum_{\tau \in T_L} \Delta_\tau [L:k]= d}} \prod_{L \in S} \prod_{\tau \in T_L} Z_0^{\Delta_\tau}(Y^\tau_L)  \]
To show this, we check that the condition on the degrees $\Delta_{(L_i, \tau_i)}$ holds. We have 
\begin{align*}
\sum_{L \in S} \sum_{\tau \in T_L} \Delta_{(L, \tau)} [L : k ] &  = \sum_{i=1}^s \Delta_{(L_i, \tau_i)} [L_i : k ] \\
&= \sum_{i=1}^s \sum_{x \in P_{(L_i,\tau_i)}} n_x [L_i:k ]\\
&= \sum_{j=1}^r  n_{x_j} [k(x_j) : k] \\
&= \deg z\\
&= d
\end{align*}
Hence, the inclusion $Z_0^d(X)^g \supset Z_0^d(X)$ holds and thus $Z_0^d(X)^g = Z_0^d(X)$, as required.
\end{proof}

\begin{remark} \label{rem: global_eff} The subset of $Z_0^1(X)^g$ given by
\[ Z_0^1(X)_k^{g, \textrm{eff}} := \textbf{g}_\ast \left(  \coprod_{\substack{T_k \subset H^1(k,G)\\ \textup{$T_k\neq \emptyset$ finite}}} \enskip \coprod_{\substack{(\Delta_\tau)_{\tau \in T_k}: \\ \Delta_\tau \in \Z \textrm{ for all $\tau$,}\\  \sum_\tau \Delta_\tau = 1}} \prod_{\tau \in T_k} Z_0^{\Delta_\tau, \textrm{eff}}(Y^{\tau})  \right),\]
where we have taken $S = \{k\}$ in Definition \ref{def},
is equal to $\coprod_{\tau \in H^1(k,G)} g^\tau(Y^\tau(k))$.
Indeed, any  effective 0-cycle in  $ Z_0^{\Delta_\tau, \textrm{eff}}(Y^{\tau})$ which is not identically zero must have degree $\Delta_\tau > 0$. Thus for $\Delta_\tau \leq 0$,  we have $ Z_0^{\Delta_\tau, \textrm{eff}}(Y^{\tau})  = \emptyset$. (Strictly speaking, when $\Delta_\tau = 0$ we need to also remove the identically zero 0-cycle; we will be a bit imprecise and ignore this minor issue.) 
But the only way in which $\sum_{\tau \in T_k} \Delta_\tau = 1$ for $\Delta_\tau >0$ is if $T_k = \{ \tau\}$ for some $\tau \in H^1(k,G)$ and $\Delta_\tau = 1$.
Hence, we have
\[ Z_0^1(X)_k^{g, \textrm{eff}} = \textbf{g}_\ast \left( \coprod_{\tau \in H^1(k,G)} Z_0^{1, \textrm{eff}}(Y^{\tau})  \right). \]
One can check that $Z_0^{1, \textrm{eff}}(Y^{\tau}) = Y^\tau(k)$. The result then follows by the definition of $\textbf{g}_\ast$.
\end{remark}

\section{The descent and étale-Brauer sets for 0-cycles} \label{sec: local}

In this section, we extend the definitions from the previous section to the local setting and we define the descent and the étale-Brauer sets for 0-cycles.

Recall that, for any variety $V$ over a number field $k$ and any integer $d$, the \defi{set of adelic 0-cycles of degree $d$ of $V$} is the subset  $Z_0^d(V_{\A_k})$ of $\prod_{v \in \Omega_k} Z_0^d(V_{k_v})$ of 0-cycles $(z_v)_v$ such that, for all but finitely many $v \in \Omega_k$, we have that $z_v$ extends to a 0-cycle over some model $\calV \to \Spec(\calO_{k_v})$ of $V_{k_v} \to \Spec(k_v)$. If $V$ is proper, then $Z_0^d(V_{\A_k}) = \prod_{v \in \Omega_k} Z_0^d(V_{k_v})$.

Let $X$ be a variety over a number field $k$. For any integer $d$ and any place $v \in \Omega_k$, we briefly recall how the natural map $\res_v : Z_0^d(X) \to Z_0^d(X_{k_v})$ is defined. 
For $z := \sum_{x \in X} n_x x \in Z_0^d(X)$,  
we let
\[ \res_v(z) := \sum \limits_{x \in X} \sum \limits_{w \in \Omega_{k(x)} : w \mid v}n_x (x)_w,\]
where $(x)_w \in X(k(x)_w)$ is the image of $x$ under the natural inclusion $X(k(x)) \to X(k(x)_w)$. 
For any $x \in X$, 
we have by e.g. \cite{Neu99}*{Ch.\ II, Cor.\ 8.4} that
\[ \sum_{w \in \Omega_{k(x)} : w \mid v} [k(x)_w : k_v] = [k(x) : k] \] 
and thus
\[ \deg(z_v) = \sum_{x \in X} \sum_{w \in \Omega_{k(x)}: w \mid v} n_x [k(x)_w : k_v]  = \sum_{x \in X} n_x [k(x) :k] = \deg(z). \]

\begin{defn} Let $X$ be a variety over a number field $k$. Let $g: Y \to X$ be a $G$-torsor over $X$, where $G$ is a linear algebraic group over $k$.
We define the map \[ \textbf{g}_{\ast, \textrm{ad}} \colon  \coprod_{\substack{S \subset \frakF_k\\ \textup{$S\neq \emptyset$ finite}} }  \coprod_{\substack{(T_L \subset H^1(L,G))_{L \in S} \\ \textup{$T_L\neq \emptyset$ finite for all $L \in S$}}}  \prod_{L \in S} \prod_{\tau \in T_L} Z_0(Y^\tau_{\A_L}) \to Z_0(X_{\A_k})\]
as follows.
We first observe that
\[ Z_0(Y^\tau_{\Adeles_{L}}) \subset \prod_{w \in \Omega_{L}} Z_0(Y^\tau_{L_w}) = \prod_{v \in \Omega_k} \prod_{w \in \Omega_L: w \mid v} Z_0(Y^\tau_{L_w}). \]
Hence, an element $\tilde{y}$ in the domain of $\textbf{g}_{\ast, \textrm{ad}}$  can be written as 
\[ 
\begin{array}{ll}
\tilde{y} &:= \left( \left(\left( \left( \sum_{y \in Y_{L_w}^{\tau}} n_{y}\; y \right)_{w \in \Omega_{L} : w|v}\right)_{v \in \Omega_k} \right)_{\tau \in T_L} \right)_{L \in S}\\
&= \left(\left( \left( \left( \sum_{y \in Y_{L_w}^{\tau}} n_{y}\; y \right)_{w \in \Omega_{L} : w|v}\right)_{\tau \in T_L} \right)_{L \in S} \right)_{v \in \Omega_k},
\end{array}\] 
and we define
\[ \textbf{g}_{\ast, \textrm{ad}}(\tilde{y}) :=  \left( \sum_{L \in S}\sum_{\tau \in T_L} \sum_{w \in \Omega_L: w \mid v} \sum_{y \in Y_{L_w}^{\tau}} n_{y} (s_{L_w} \circ g_{L_w}^{\tau})_\ast (y)\right)_{v \in \Omega_k} \]
where $g_{L_w}^{\tau} \colon Y_{L_w}^{\tau} \to X_{L_w}$ and $s_{L_w} \colon X_{L_w} \to X_{k_v}$ are the natural maps.
\end{defn}

\begin{lemma} \label{lem:addegrees}Let $X$ be a variety over a number field $k$. Let $g: Y \to X$ be a torsor under some linear algebraic group $G$ over $k$.
Fix $d \in \Z$. Then, 
\[ \mathbf{g}_{\ast, \textnormal{ad}} \colon \coprod_{S \subset \frakF_k }  \coprod_{(T_L \subset H^1(L,G))_{L \in S}}   \coprod_{\substack{\left((\Delta_\tau)_{\tau \in T_L}\right)_{L \in S}: \\ \Delta_\tau \in \Z \textrm{ for all $\tau$}, \\ \sum_{L \in S} \sum_{\tau \in T_L} \Delta_\tau [L:k]= d}} \prod_{L \in S} \prod_{\tau \in T_L} Z_0^{\Delta_\tau}(Y_{\A_{L}}^{\tau})  \to Z_0^d(X_{\A_k}).\]
\end{lemma}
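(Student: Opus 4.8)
The plan is to prove this local statement by reducing it to a placewise degree computation that mirrors the proof of Lemma~\ref{lem: degrees}. Since $X$ is proper, we have $Z_0^d(X_{\A_k}) = \prod_{v \in \Omega_k} Z_0^d(X_{k_v})$, so there is no integral-model condition to verify on the target, and it suffices to show that for every $\tilde y$ in the indicated domain and every place $v \in \Omega_k$, the $v$-component $\left(\mathbf{f}_{\ast,\textrm{ad}}(\tilde y)\right)_v$ is a $0$-cycle on $X_{k_v}$ of degree exactly $d$. That each $v$-component is a genuine, finitely supported $0$-cycle is immediate, since $S$ and each $T_K$ are finite and each local factor of $\tilde y$ has finite support.

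Next I would fix $v$ and unwind the definition of $\mathbf{f}_{\ast,\textrm{ad}}$. For a closed point $y \in Y^\tau_{K_w}$, writing $P := (s_{K_w}\circ f_{K_w}^\tau)(y)$ for its image closed point of $X_{k_v}$, the pushforward contributes $[K_w(y):k_v(P)]\,P$, and by the tower law the $k_v$-degree of the pushforward of $n_y\,y$ equals $n_y\,[K_w(y):k_v(P)]\,[k_v(P):k_v] = n_y\,[K_w(y):k_v]$. Summing the contributions, the degree of the $v$-component becomes
\begin{align*}
\deg\left(\mathbf{f}_{\ast,\textrm{ad}}(\tilde y)\right)_v
&= \sum_{K \in S}\sum_{\tau \in T_K} \sum_{\substack{w \in \Omega_K\\ w \mid v}} \sum_{y \in Y^\tau_{K_w}} n_y\,[K_w(y) : k_v]\\
&= \sum_{K \in S}\sum_{\tau \in T_K} \sum_{\substack{w \in \Omega_K\\ w \mid v}} [K_w : k_v] \sum_{y \in Y^\tau_{K_w}} n_y\,[K_w(y) : K_w].
\end{align*}

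The key observation is that, by hypothesis, $\tilde y$ lies in $\prod_{K}\prod_{\tau} Z_0^{\Delta_\tau}(Y^\tau_{\A_K})$, and membership in $Z_0^{\Delta_\tau}(Y^\tau_{\A_K}) \subset \prod_{w \in \Omega_K} Z_0^{\Delta_\tau}(Y^\tau_{K_w})$ forces \emph{each} local factor to have degree exactly $\Delta_\tau$ over $K_w$; that is, $\sum_{y} n_y\,[K_w(y):K_w] = \Delta_\tau$ for every $w \mid v$. Substituting this, and then applying the local degree formula $\sum_{w \mid v}[K_w : k_v] = [K : k]$ (the same input from \cite{Neu99}*{Ch.\ II, Cor.\ 8.4} used above), gives
\[ \deg\left(\mathbf{f}_{\ast,\textrm{ad}}(\tilde y)\right)_v = \sum_{K \in S}\sum_{\tau \in T_K} \Delta_\tau\,[K:k] = \sum_{K \in S}[K:k]\sum_{\tau \in T_K} \Delta_\tau = d, \]
where the final equality is precisely the constraint defining the index set of the innermost coproduct. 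As this holds for all $v$, we conclude $\mathbf{f}_{\ast,\textrm{ad}}(\tilde y) \in Z_0^d(X_{\A_k})$.

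There is no serious obstacle here: the content is essentially bookkeeping, and the computation is the adelic analogue of Lemma~\ref{lem: degrees} carried out one place at a time. The only two points requiring genuine care are (i) recognizing that the adelic degree condition is imposed \emph{placewise}, so that each $w \mid v$ contributes the same $\Delta_\tau$, and (ii) invoking properness of $X$ to identify $Z_0^d(X_{\A_k})$ with the full product $\prod_v Z_0^d(X_{k_v})$, which is what lets us bypass verifying the integral-model condition for the image even when the $Y^\tau$ are themselves non-proper.
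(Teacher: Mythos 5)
Your proof is correct and follows essentially the same route as the paper's: a placewise degree computation using the tower law, the fact that each local factor of an adelic $0$-cycle in $Z_0^{\Delta_\tau}(Y^\tau_{\A_K})$ has degree $\Delta_\tau$, and the identity $\sum_{w \mid v}[K_w:k_v] = [K:k]$. Your explicit remark that properness of $X$ identifies $Z_0^d(X_{\A_k})$ with $\prod_v Z_0^d(X_{k_v})$, so no integral-model condition needs checking, is a point the paper leaves implicit, but it is the same argument.
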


\begin{proof} The proof follows in a similar manner to that of Lemma \ref{lem: degrees}. Let 
\[\tilde{y}:= \left( \left(\left( \left( \sum_{y \in Y_{L_w}^{\tau}} n_{y}\; y \right)_{w \in \Omega_{L} : w|v}\right)_{v \in \Omega_k} \right)_{\tau \in T_L} \right)_{L \in S} \]
be in the domain of $\textbf{g}_{\ast, \textrm{ad}}$.
Then, for each $v \in \Omega_k$, we have
\begin{align*} 
\deg(\textbf{g}_{\ast, \textrm{ad}}(\tilde{y})_v) & = \sum_{L \in S} \sum_{\tau \in T_L} \sum_{w \in \Omega_L:w \mid v} \sum_{y \in Y^{\tau}_{L_w}} n_y [L_w(y) : k_v(s_{L_w}(g^\tau_{L_w}(y)))] [k_v(s_{L_w}(g^\tau_{L_w}(y))): k_v] \\
& = \sum_{L \in S} \sum_{\tau \in T_L} \sum_{w \in \Omega_L:w \mid v} \sum_{y \in Y^{\tau}_{L_w}} n_y [L_w(y) : k_v] \\
& = \sum_{L \in S} \sum_{\tau \in T_L} \sum_{w \in \Omega_L: w \mid v} \sum_{y \in Y^{\tau}_{L_w}} n_y [L_w(y) : L_w][L_w : k_v] \\
& = \sum_{L \in S} \sum_{\tau \in T_L} \sum_{w \in \Omega_L: w \mid v} [L_w : k_v] \sum_{y \in Y^{\tau}_{L_w}} n_y [L_w(y) : L_w] \\
& = \sum_{L \in S} \sum_{\tau \in T_L} \Delta_{\tau} \sum_{w \in \Omega_L: w \mid v} [L_w : k_v] \\
& = \sum_{L \in S} \sum_{\tau \in T_L} \Delta_{\tau} [L:k]\\
&= d, 
\end{align*}
where the fifth equality follows from the fact that the inner summation represents the degree of the 0-cycle $\sum_{y \in Y^{\tau}_{L}} n_y y$, which is by definition $\Delta_\tau$, the sixth equality comes from the fact that $\sum_{w \in \Omega_L: w \mid v} [L_w : k_v] = [L:k]$, and the final equality comes from the definition of the integers $\Delta_\tau$. 
\end{proof}

\begin{defn} \label{def1} Let $X$ be a  variety over a number field $k$. Let $g: Y \to X$ be a $G$-torsor over $X$, where $G$ is a linear algebraic group over $k$. Let $d \in \Z$. 
 For any given finite non-empty set $S \subset \frakF_k$, we define the \defi{$g$-descent set of degree $d$ of $X$ associated to $S$} by
\[ Z_0^d(X_{\A_k})^g_S :=
 \textbf{g}_{\ast, \textrm{ad}} \left(  \coprod_{\substack{(T_L \subset H^1(L,G))_{L \in S} \\ \textup{$T_L\neq \emptyset$ finite for all $L \in S$}}}   \coprod_{\substack{\left((\Delta_\tau)_{\tau \in T_L}\right)_{L \in S}: \\ \Delta_\tau \in \Z \textrm{ for all $\tau$}, \\ \sum_{L \in S} \sum_{\tau \in T_L} \Delta_\tau [L:k]= d}} \prod_{L \in S} \prod_{\tau \in T_L} Z_0^{\Delta_\tau}(Y_{\A_{L}}^{\tau})  \right).\]
 
 We define the \defi{$g$-descent set of degree $d$ of $X$} by
\[ Z_0^d(X_{\A_k})^g := \bigcup_{\substack{S \subset \frakF_k\\ \textup{$S\neq \emptyset$ finite}} }  Z_0^d(X_{\A_k})^g_S.\]

More generally, if $\calG$ is a set of $k$-isomorphism classes of linear algebraic groups over $k$, then the \defi{$\calG$-descent set of degree $d$ of $X$} is defined as
\[  Z_0^d(X_{\A_k})^{\calG} := \bigcup_{\substack{S \subset \frakF_k\\ \textup{$S\neq \emptyset$ finite}} }  \bigcap_{G \in \calG} \bigcap_{[g: Y \to X] \in H^1(X,G)} Z_0^d(X_{\A_k})^g_S \]
\end{defn}

\begin{remark} In the above definition, the reason why we can take the union $\bigcup_{\substack{S \subset \frakF_k\\ \textup{$S\neq \emptyset$ finite}}}$ first, before taking the intersection over the linear algebraic groups and torsors, and still have that $Z_0^d(X) \subset Z_0^d(X_{\A_k})^{\calG}$ is the following: since the motivation behind the sets $S$ comes from the residue fields of the points in $\supp(z)$ for all $z \in Z_0^d(X)$, then, as the proof of Proposition \ref{prop:partition} shows, for any $z \in Z_0^d(X)$ there exists some $S$ (depending on $\supp(z)$ only) such that, for any linear algebraic group $G$ over $k$ and any $G$-torsor $g: Y \to X$, we have 
\[z \in \textbf{g}_\ast \left(  
  \coprod_{\substack{(T_L \subset H^1(L,G))_{L \in S} \\ \textup{$T_L\neq \emptyset$ finite for all $L \in S$}}} 
 \coprod_{\substack{\left((\Delta_\tau)_{\tau \in T_L}\right)_{L \in S}: \\ \Delta_\tau \in \Z \textrm{ for all $\tau$}, \\ \sum_{L \in S} \sum_{\tau \in T_L} \Delta_\tau [L:k]= d}} \prod_{L \in S} \prod_{\tau \in T_L} Z_0^{\Delta_\tau}(Y^\tau_L)  \right),\]
 that is, the same $S$ works for all linear algebraic groups and torsors. We note that the set $ Z_0^d(X_{\A_k})^{\calG} $, as defined above, is potentially smaller than the set
 \[ \bigcap_{G \in \calG} \bigcap_{[g: Y \to X] \in H^1(X,G)} \bigcup_{\substack{S \subset \frakF_k\\ \textup{$S\neq \emptyset$ finite}} }  Z_0^d(X_{\A_k})^g_S. \]
 \end{remark}

\begin{remark} If we set $d=1$, $S=\{k\}$ in Definition \ref{def1}, and we restrict to effective 0-cycles only, then $Z_0^{1, \textrm{eff}}(X_{\AA_k})_k^g = X(\A_k)^g$. Indeed, the proof follows the same argument as that of Remark \ref{rem: global_eff}. 
\end{remark}

\begin{defn} \label{def:fBrSet}
Let $X$ be a  variety over a number field $k$. Let $g: Y \to X$ be a $G$-torsor over $X$, where $G$ is a linear algebraic group over $k$. Let $d \in \Z$. For any given finite non-empty set $S \subset \frakF_k$, we define the \defi{$g$-Brauer set of degree $d$ of $X$ associated to $S$} by
\[ Z_0^d(X_{\A_k})^{g, \Br}_S :=  \textbf{g}_{\ast, \textrm{ad}} \left(  \coprod_{\substack{(T_L \subset H^1(L,G))_{L \in S} \\ \textup{$T_L\neq \emptyset$ finite for all $L \in S$}}} \coprod_{\substack{\left((\Delta_\tau)_{\tau \in T_L}\right)_{L \in S}: \\ \Delta_\tau \in \Z \textrm{ for all $\tau$}, \\ \sum_{L \in S} \sum_{\tau \in T_L} \Delta_\tau [L:k]= d}} \prod_{L \in S} \prod_{\tau \in T_L} Z_0^{\Delta_\tau}(Y_{\A_{L}}^{\tau})^{\Br}   \right).\]
We define the \defi{$g$-Brauer set of degree $d$ of $X$} by
\[ Z_0^d(X_{\A_k})^{g, \Br} := \bigcup_{\substack{S \subset \frakF_k\\ \textup{$S\neq \emptyset$ finite}} } \textbf{g}_{\ast, \textrm{ad}} \left(  \coprod_{\substack{(T_L \subset H^1(L,G))_{L \in S} \\ \textup{$T_L\neq \emptyset$ finite for all $L \in S$}}}  \coprod_{\substack{\left((\Delta_\tau)_{\tau \in T_L}\right)_{L \in S}: \\ \Delta_\tau \in \Z \textrm{ for all $\tau$}, \\ \sum_{L \in S} \sum_{\tau \in T_L} \Delta_\tau [L:k]= d}} \prod_{L \in S} \prod_{\tau \in T_L} Z_0^{\Delta_\tau}(Y_{\A_{L}}^{\tau})^{\Br}   \right).\]

The \defi{étale-Brauer set of degree $d$ of $X$} is defined by
\[ Z_0^d(X_{\A_k})^{ \etBr} := \bigcup_{\substack{S \subset \frakF_k\\ \textup{$S\neq \emptyset$ finite}} }\bigcap_{F \textrm{ finite lin.\ alg.\ $k$-group}} \, \bigcap_{[f: Y \to X] \in H^1(X,F)} Z_0^d(X_{\A_k})^{f, \Br}_S.\]
\end{defn}

The constructions above are all functorial, as the next proposition shows.
\begin{prop} Let $X$ and $Y$ be varieties over $k$ and let $\phi\colon Y \to X$ be a morphism of $k$-varieties.  Let $G$ be a linear algebraic group over $k$ and let $g \colon W \to X$ be a $G$-torsor over $X$. If $\tilde{g} \colon V \to Y$ is the $G$-torsor over $Y$ obtained by pulling $g \colon W \to X$ back along $\phi: Y \to X$, then $\phi$ induces a map of sets
\[ \phi\colon Z_0^d(Y_{\A_k})^{\tilde{g}} \to Z_0^d(X_{\A_k})^g.\]
In particular, we also have an induced map of sets $ \phi\colon Z_0^d(Y_{\A_k})^{\etBr} \to Z_0^d(X_{\A_k})^{\etBr}$.
\end{prop}

\begin{proof}   Let $(\tilde{z}_v)_v := \left(\sum_{y_v \in Y_{k_v}} n_{y_v} y_v\right)_v \in Z_0^d(Y_{\Adeles_k})^{\tilde{g}}$ and let \[ (z_v)_v := \phi_\ast((\tilde{z}_v)_v)  =  \left(\sum_{y_v \in Y_{k_v}} n_{y_v} {\phi_v}_\ast(y_v)\right)_v  \in Z_0^d(X_\Adeles). \]

    We claim that $(z_v)_v \in Z_0^d(X_{\Adeles_k})^{g}$. 
        Since $(\tilde{z}_v)_v \in Z_0^d(Y_\Adeles)^{\tilde{g}}$, there exist a non-empty $S$,  $(T_L)_{L \in S}$ and $\left(\left(\Delta_{\sigma}\right)_{\sigma \in T_L}\right)_{T_L \in S}$ with $\sum_{L \in S} \sum_{\sigma \in T_L} \Delta_{\sigma} [L:k] = d$ and $0$-cycles 
    \[ \left(\left( \left(\sum_{v_w^\sigma \in V_{L_w}^\sigma} \tilde{n}_{v_w}^\sigma v_w^\sigma \right)_{w \in \Omega_L}
    \right)_{\sigma \in T_L}\right)_{L \in S} \in \prod_{L \in S} \prod_{\sigma \in T_L} Z_0^{\Delta_\sigma}(V^{\sigma}_{\Adeles_L}) \] that recombine to $(\tilde{z}_v)_v$, i.e., for all $v \in \Omega_k$, we have 
    \begin{equation}\label{eqn:star}
        \tilde{z}_v = \sum_{L \in S} \sum_{\sigma \in T_L} \sum_{w \in \Omega_L: w \mid v} \, \sum_{v_w^\sigma \in V_{L_w}^\sigma} \tilde{n}_{v_w}^\sigma (\tilde{s}_{L_w}^\sigma \circ \tilde{g}_{L_w}^\sigma )_{\ast}(v_w^\sigma).
    \end{equation}
    Now, since $\tilde{g} \colon V \xrightarrow{G} Y$ arose as the pullback of $g \colon W \xrightarrow{G} X$, we have, for any $L \in S$ and $\sigma \in T_L$, the pullback diagram
    \begin{equation}\label{eqn:pullback} \begin{tikzcd} V_L^\sigma \ar{d}[swap]{\tilde{g}_L^\sigma} \ar{r}{\varphi_L^\sigma} &W_L^\sigma \ar{d}{g_L^\sigma} \\
    Y_L^\sigma \ar{r}{\phi^\sigma_L} & X_L^\sigma
    \end{tikzcd}. \end{equation}
    Moreover, there is a commutative diagram 
    \begin{equation} \label{eqn: basechange} \begin{tikzcd} Y_L \ar{d}[swap]{\tilde{s}_L} \ar{r}{\phi_L} &X_L \ar{d}{s_L} \\
    Y \ar{r}{\phi} & X
    \end{tikzcd}, \end{equation}
    where $\tilde{s}_L$ and $s_L$ are the natural maps.
    Hence, by using the push-forward map 
    \begin{equation} \label{pushf}Z_0^{\Delta_\sigma}(V_{\Adeles_L}^\sigma) \xrightarrow{{\varphi}_{L,\ast}^\sigma} Z_0^{\Delta_\sigma}(W_{\Adeles_L}^\sigma),\end{equation}  
    the 0-cycles $\left(\sum_{v_w^\sigma \in V_{L_w}^\sigma} \tilde{n}_{v_w}^\sigma v_w^\sigma \right)_{w \in \Omega_L}$ push-forward to $0$-cycles 
    \( \left( \sum_{v_w^\sigma \in V_{L_w}^\sigma } \tilde{n}_{v_w}^\sigma {\varphi}_{L,\ast}^\sigma(v_w^\sigma) \right)_{w \in \Omega_L} \in Z_0^{\Delta_\sigma}(W_{\Adeles_L}^\sigma). \)
    In particular, 
    \[ \textbf{g}_{\ast,\textrm{ad}} \left(\left(\left( \left( \sum_{v_w^\sigma \in V_{L_w}^\sigma } \tilde{n}_{v_w}^\sigma {\varphi}_{L,\ast}^\sigma(v_w^\sigma) \right)_{w \in \Omega_L}  \right)_{\tau \in T_L}\right)_{L \in S} \right)\in Z_0^d(X_{\A_k})^g.\]
    Hence, it suffices to show that the 0-cycles  \(\left(\left( \left( \sum_{v_w^\sigma \in V_{L_w}^\sigma } \tilde{n}_{v_w}^\sigma {\varphi}_{L,\ast}^\sigma(v_w^\sigma) \right)_{w \in \Omega_L}  \right)_{\tau \in T_L}\right)_{L \in S}\)
     recombine to $(z_v)_v$. Indeed, for all $v \in \Omega_k$, we have

\begin{align*}
   &\sum_{L \in S} \sum_{\sigma \in T_L} \sum_{w \in \Omega_L: w \mid v} \sum_{v_w^\sigma \in V_{L_w}^\sigma} \tilde{n}_{v_w}^\sigma (s_{L_w}\circ g_{L_w}^\sigma )_{\ast}(\varphi^\sigma_{L_w, \ast}(v_w^\sigma)) \\
   =&  \sum_{L \in S} \sum_{\sigma \in T_L}\sum_{w \in \Omega_L: w \mid v} \sum_{v_w^\sigma \in V_{L_w}^\sigma} \tilde{n}_{v_w}^\sigma (s_{L_w}\circ g_{L_w}^\sigma \circ \varphi_L^\sigma)_{\ast}(v_w^\sigma) \\
    =&  \sum_{L \in S} \sum_{\sigma \in T_L} \sum_{w \in \Omega_L: w \mid v} \sum_{v_w^\sigma \in V_{L_w}^\sigma} \tilde{n}_{v_w}^\sigma (s_{L_w}\circ \phi_{L_w} \circ \tilde{g}_{L_w}^\sigma )_{\ast}(v_w^\sigma) \\
   = &  \sum_{L \in S} \sum_{\sigma \in T_L}\sum_{w \in \Omega_L: w \mid v} \sum_{v_w^\sigma \in V_{L_w}^\sigma} \tilde{n}_{v_w}^\sigma ( \phi_{k_v} \circ \tilde{s}_{L_w} \circ \tilde{g}_{L_w}^\sigma )_{\ast}(v_w^\sigma) \\
   = &  \phi_{k_v,\ast}\left(\sum_{L \in S} \sum_{\sigma \in T_L}\sum_{w \in \Omega_L: w \mid v} \sum_{v_w^\sigma \in V_{L_w}^\sigma} \tilde{n}_{v_w}^\sigma ( \tilde{s}_{L_w} \circ \tilde{g}_{L_w}^\sigma )_{\ast}(v_w^\sigma)\right) \\
    =&  \phi_{k_v,\ast}(\tilde{z}_v)\\
    =&  z_v,
\end{align*}
where the second equality follows from \eqref{eqn:pullback}, the third from \eqref{eqn: basechange}, and the fifth from \eqref{eqn:star}.

For the functoriality of the étale-Brauer set, it suffices to notice that if $(\tilde{z}_v)_v \in Z_0^d(Y_\Adeles)^{\etBr}$, then there exist a non-empty $S$ of extensions of $k$ such that, for any finite linear algebraic group $F$ over $k$ and any $F$-torsor $\tilde{f} \colon V \to Y$, we have
\[ (\tilde{z}_v)_v \in \tilde{\textbf{f}}_{\ast, \textrm{ad}} \left(  \coprod_{\substack{(T_L \subset H^1(L,F))_{L \in S} \\ \textup{$T_L\neq \emptyset$ finite for all $L \in S$}}} \coprod_{\substack{\left((\Delta_\tau)_{\tau \in T_L}\right)_{L \in S}: \\ \Delta_\tau \in \Z \textrm{ for all $\tau$}, \\ \sum_{L \in S} \sum_{\tau \in T_L} \Delta_\tau [L:k]= d}} \prod_{L \in S} \prod_{\tau \in T_L} Z_0^{\Delta_\tau}(V_{\A_{L}}^{\tau})^{\Br}   \right). \]
Let $(z_v)_v :=\phi_\ast((\tilde{z}_v)_v) \in Z_0^d(X_{\A_k})$. We claim that $(z_v)_v \in Z_0^d(X_{\A_k})^{\etBr}$.

If $f \colon W \to X$ is any $F$-torsor and $\tilde{f} \colon V \to Y$ is the pullback of $f$ along $\phi: Y \to X$, then $(\tilde{z}_v)_v \in Z_0^d(Y_{\A_k})^{\tilde{f}, \Br}_S$. Hence, by the functoriality proof above and using the fact that the Brauer-Manin set construction is also functorial, meaning that the push-forward map in \eqref{pushf} induces the map
\[Z_0^{\Delta_\sigma}(V_{\Adeles_L}^\sigma)^{\Br}\xrightarrow{{\varphi}_{L,\ast}^\sigma} Z_0^{\Delta_\sigma}(W_{\Adeles_L}^\sigma)^{\Br},\]
we have that
\[ (z_v)_v  \in Z_0^d(X_{\A_k})^{f,\Br}_S.\]
But since this is true for any finite linear algebraic group $F$ over $k$ and any $F$-torsor $f \colon W \to X$, we have that
\[ (z_v)_v \in \bigcap_{F \textrm{ finite lin.\ alg.\ $k$-group}} \, \bigcap_{[f: Y \to X] \in H^1(X,F)} Z_0^d(X_{\A_k})^{f,\Br}_S \subset Z_0^d(X_{\A_k})^{\etBr}, \]
as required.
\end{proof}

\begin{prop} \label{prop:etBrSubsetBr} Let $X$ be a variety over a number field $k$. Let $g: Y \to X$ be a $G$-torsor over $X$, where $G$ is a linear algebraic group over $k$. Then
$Z_0^d(X_{\A_k})^{g, \Br}  \subset Z_0^d(X_{\A_k})^{\Br} $. In particular, $Z_0^d(X_{\A_k})^{ \etBr}  \subset Z_0^d(X_{\A_k})^{\Br}$.
\end{prop}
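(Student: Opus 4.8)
The plan is to unwind both sides to the level of local pairings and verify the Brauer--Manin reciprocity condition for $X$ directly, using two ingredients: the projection formula for the local pairing under pushforward along the maps $s_{K_w}\circ f^\tau_{K_w}$, and the compatibility of corestriction with invariants over local fields, which lets one identify the pairing computed over $k_v$ with the one computed over $K_w$. For a $k_v$-variety $V$, a $0$-cycle $z=\sum_x n_x\,x$ on $V$, and $\beta \in \Br V$, write $\langle z,\beta\rangle_v := \sum_x n_x \inv_v(\cores_{k_v(x)/k_v}\beta(x))$, and similarly $\langle\,,\,\rangle_w$ for the local pairing over $K_w$; then membership in $Z_0^d(X_{\A_k})^{\Br}$ is exactly the condition $\sum_{v\in\Omega_k}\langle z_v,\alpha\rangle_v = 0$ for all $\alpha\in\Br X$. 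Fix such an $\alpha$ and write $(z_v)_v = \textbf{f}_{\ast,\textrm{ad}}(\tilde y)$, where for each $K\in S$ and $\tau\in T_K$ the corresponding component $(z^{(K,\tau)}_w)_{w\in\Omega_K}$ lies in $Z_0^{\Delta_\tau}(Y^\tau_{\A_K})^{\Br}$, with $z^{(K,\tau)}_w=\sum_y n_y\,y$. For each such $K,\tau$ I would set $\gamma_{K,\tau}:=(s_K\circ f^\tau_K)^*\alpha\in\Br(Y^\tau_K)$, the key point being that this is a \emph{global} class on the $K$-variety $Y^\tau_K$, so that by hypothesis $\sum_{w\in\Omega_K}\langle z^{(K,\tau)}_w,\gamma_{K,\tau}\rangle_w=0$.

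First I would record the projection formula. For a closed point $y$ of $Y^\tau_{K_w}$ and the composite $\phi=s_{K_w}\circ f^\tau_{K_w}\colon Y^\tau_{K_w}\to X_{k_v}$, one has $\phi_\ast(y)=[K_w(y):k_v(\phi(y))]\,\phi(y)$, and the identity $\cores_{K_w(y)/k_v}\circ\res_{K_w(y)/k_v(\phi(y))}=[K_w(y):k_v(\phi(y))]\,\cores_{k_v(\phi(y))/k_v}$ on Brauer groups, together with $(\phi^*\alpha)(y)=\res_{K_w(y)/k_v(\phi(y))}\alpha(\phi(y))$, gives $\langle\phi_\ast(y),\alpha\rangle_v=\langle y,\phi^*\alpha\rangle_v$. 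Next I would invoke the fact that for any finite extension $L/k_v$ of local fields $\inv_v\circ\cores_{L/k_v}=\inv_L$ (valid at all places, archimedean included). Applying this with $L=K_w(y)$ over both $k_v$ and $K_w$ shows that the $k_v$-contribution $\inv_v(\cores_{K_w(y)/k_v}(\phi^*\alpha)(y))$ and the $K_w$-contribution $\inv_w(\cores_{K_w(y)/K_w}(\phi^*\alpha)(y))$ both equal $\inv_{K_w(y)}((\phi^*\alpha)(y))$. Since $\phi^*\alpha$ is precisely the restriction of $\gamma_{K,\tau}$ to $Y^\tau_{K_w}$, summing over the support of $z^{(K,\tau)}_w$ and over $w\mid v$ yields
\[ \langle z_v,\alpha\rangle_v = \sum_{K\in S}\sum_{\tau\in T_K}\sum_{\substack{w\in\Omega_K\\ w\mid v}} \langle z^{(K,\tau)}_w,\gamma_{K,\tau}\rangle_w. \]

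Finally I would sum over all $v\in\Omega_k$ and interchange the order of summation. Since each place $w$ of $K$ lies over exactly one place $v$ of $k$, the double sum $\sum_{v}\sum_{w\mid v}$ runs over all of $\Omega_K$, so
\[ \sum_{v\in\Omega_k}\langle z_v,\alpha\rangle_v = \sum_{K\in S}\sum_{\tau\in T_K}\Bigl(\sum_{w\in\Omega_K}\langle z^{(K,\tau)}_w,\gamma_{K,\tau}\rangle_w\Bigr), \]
and each inner sum vanishes by the Brauer--Manin condition on $Y^\tau_{\A_K}$ applied to $\gamma_{K,\tau}\in\Br(Y^\tau_K)$. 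As $\alpha\in\Br X$ was arbitrary, $(z_v)_v\in Z_0^d(X_{\A_k})^{\Br}$, proving $Z_0^d(X_{\A_k})^{f,\Br}\subset Z_0^d(X_{\A_k})^{\Br}$. For the ``in particular'' assertion, I would note that for each finite non-empty $S\subset\frakF_k$ the intersection $\bigcap_G\bigcap_f Z_0^d(X_{\A_k})^{f,\Br}_S$ is contained in any single term $Z_0^d(X_{\A_k})^{f,\Br}_S\subset Z_0^d(X_{\A_k})^{f,\Br}\subset Z_0^d(X_{\A_k})^{\Br}$; taking the union over $S$ preserves this containment, giving $Z_0^d(X_{\A_k})^{\etBr}\subset Z_0^d(X_{\A_k})^{\Br}$.

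The step I expect to demand the most care is the identification of the two local pairings: one must keep scrupulous track of over which base field each corestriction and invariant is taken, and verify that $\gamma_{K,\tau}$ genuinely descends to the global variety $Y^\tau_K$ (so that reciprocity over $K$ is available) rather than existing only on the local pieces $Y^\tau_{K_w}$. Everything else is bookkeeping with the multi-indexed sums defining $\textbf{f}_{\ast,\textrm{ad}}$.
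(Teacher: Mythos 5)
Your proof is correct and follows essentially the same route as the paper's: both reduce the Brauer--Manin pairing on $X$ over $k$ to the pairings of the cycles $z^{(K,\tau)}_w$ against the global pullback class $(s_K\circ f^\tau_K)^*\alpha \in \Br(Y^\tau_K)$, using exactly the two ingredients you isolate (restriction--corestriction and the compatibility $\inv\circ\cores=\inv$ of local invariants with corestriction). The only difference is organizational --- you compute $\langle z_v,\alpha\rangle_v$ and decompose it, while the paper runs the same chain of equalities starting from the vanishing sum over each $Y^\tau_K$ --- and your writeup is in fact slightly more explicit about why the local classes glue to a global class on $Y^\tau_K$, a point the paper leaves implicit.
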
 
\begin{proof}
Let $x \in Z_0^d(X_{\AA_k})^{g,\Br}$. This means that there exists some
\[ \tilde{y} := \left(\left( \left(\displaystyle \sum_{y \in Y_{L_w}^{\tau}} n_y\; y \right)_{w \in \Omega_{L}} \right)_{\tau \in T_L} \right)_{L \in S} \in \prod_{L \in S} \prod_{\tau \in T_L} Z_0^{\Delta_\tau}(Y_{\A_{L}}^{\tau})^{\Br}  \] 
for some finite $S \neq \emptyset$, some tuple of non-empty finite sets $(T_L \subset H^1(L,G))_{L \in S}$, and some tuple of integers $((\Delta_\tau)_{\tau \in T_L} )_{L \in S}$ with $\sum_{L \in S} [L:k]\sum_{\tau \in T_L} \Delta_\tau = d$,
such that
\[x =  \textbf{g}_{\ast, \textrm{ad}}(\tilde{y}) =  \left( \sum_{L \in S}\sum_{\tau \in T_L} \sum_{w \in \Omega_L: w \mid v} \sum_{y \in Y_{L_w}^{\tau}} n_y (s_{L_w} \circ g_{L_w}^{\tau})_\ast(y)\right)_{v \in \Omega_k}. \]
In particular, we remark that, for any $\tau \in T_L$, 
we have that, for any $\beta \in \Br(Y^\tau_{L})$,
\[  \sum_{w \in \Omega_{L}}  \sum_{y\in Y_{L_w}^{\tau}} n_{y} \, \inv_w \cores_{L_w(y)/L_w} \left(\beta (y)\right) = 0 .\]

We need to show that $x \in Z_0^d(X_{\A_k})^{\Br}$. Let $\gamma \in \Br X$. Fix $L \in S$ and $\tau \in T_L$. Let  $\gamma_v:=\res_{k_v/k}(\gamma) \in \Br(X_{k_v})$, $\alpha:= s_{L_w}^\ast(\gamma_v) \in \Br (X_{L_w})$, and $(g_{L_w}^\tau)^\ast(\alpha) \in \Br (Y^\tau_{L_w})$. Then

\begin{align*}
 0=&  \sum_{w \in \Omega_L}  \sum_{y\in Y_{L_w}^{\tau}} n_{y}  \inv_w \cores_{L_w(y)/L_w} \left((g_{L_w}^\tau)^\ast(\alpha) (y)\right)\\
 =&  \sum_{w\in \Omega_L}  \sum_{y\in Y_{L_w}^{\tau}} n_{y}  \inv_w \cores_{L_w(g_{L_w}^\tau(y))/L_w} \left(\cores_{L_w(y)/L_w(g_{L_w}^\tau(y))} \left((g_{L_w}^\tau)^\ast(\alpha) (y)\right)\right)\\
=&  \sum_{w\in \Omega_L}  \sum_{y\in Y_{L_w}^{\tau}} n_{y} [L_w(y):L_w(g_{L_w}^\tau(y))]  \inv_w \cores_{L_w(g_{L_w}^\tau(y))/L_w} \left(\alpha(g_{L_w}^\tau(y))\right)\\
=&   \sum_{v\in \Omega_k} \sum_{w\in \Omega_L:w|v}   \sum_{y\in Y_{L_w}^{\tau}} n_{y} [L_w(y):L_w(g_{L_w}^\tau(y))]  \inv_w \cores_{L_w(g_{L_w}^\tau(y))/L_w} \left((s_{L_w}^\ast(\gamma_v) )(g_{L_w}^\tau(y))\right)\\
=&  \sum_{v\in \Omega_k} \sum_{w\in \Omega_L:w|v}  \sum_{y\in Y_{L_w}^{\tau}} n_{y} [L_w(y):k_v(s_{L_w}(g_{L_w}^\tau(y)))]  \inv_v \cores_{k_v(s_{L_w}(g_{L_w}^\tau(y)))/k_v} \left(\gamma_v(s_{L_w}(g_{L_w}^\tau(y)))\right),
\end{align*}

where in the third equality we have used restriction-corestriction for the extension $L_w(y)/ L_w(g_{L_w}^\tau(y))$, in the fifth equality we have used the commutative diagram
\[\begin{tikzcd}[column sep = 7 em, row sep = 5 em]
\Br(\Spec(L_w(g_{L_w}^\tau(y)))) \ar{r}{\cores_{L_w(g^\tau_{L_w}(y))/L_w}} \ar{d}{\cores_{L_w(g^\tau_{L_w}(y))/k_v(s_{L_w}(g_{L_w}^\tau(y)))}} &\Br(\Spec(L_w)) \ar{r}{\inv_w} \ar{d}{\cores_{L_w/k_v}} & \Q/\Z \ar{d}{=}\\  
\Br(\Spec(k_v(s_{L_w}(g_{L_w}^\tau(y))))) \ar{r}{\cores_{k_v(s_{L_w}(g_{L_w}^\tau(y)))/k_v}} &  \Br(\Spec(k_v)) \ar{r}{\inv_v} & \Q/\Z 
\end{tikzcd}
\]
together with restriction-corestriction for the extension $L_w(g_{L_w}^\tau(y))/k_v(s_{L_w}(g_{L_w}^\tau(y)))$. 
But then, when considering $ \langle \textbf{g}_{\ast, \textrm{ad}}(y), \gamma \rangle_{BM} $, we get
{\footnotesize
\[\begin{array}{rl}\displaystyle
 \displaystyle \langle \textbf{g}_{\ast, \textrm{ad}}(\tilde{y}), \gamma \rangle_{BM}  & =\displaystyle \sum_{L \in S}\sum_{\tau \in T_L} \sum_{v\in \Omega_k} \sum_{w\in \Omega_L: w|v}  \sum_{y\in Y_{L_w}^{\tau}} n_{y} [L_w(y):k_v(s_{L}(g_{L_w}^\tau(y)))]  \inv_v \cores_{k_v(s_{L}(g_{L_w}^\tau(y)))/k_v} \left(\gamma_v(s_{L}(g_{L_w}^\tau(y)))\right)\\
&= \displaystyle\sum_{L \in S}\sum_{\tau \in T_L}0\\
&= 0,
\end{array}\]
}
that is, $x =\textbf{g}_{\ast, \textrm{ad}}(\tilde{y}) \in Z_0^d(X_{\A_k})^{\Br}$, as required.
\end{proof}

\begin{remark} For rational points, the $g$-descent set $X(\A_k)^g$ can be also written as
\begin{equation}\label{eq111}\bigcup_{\sigma \in H_{\et}^1(k,G)} g^\sigma (Y^\sigma(\A_k)) = \left\{ (x_v)_{v \in \Omega_k} \in X(\A_k) : \exists \ \sigma \in H^1_{\et}(k,G) \textrm{ s.t.} \res_v(\sigma) = [ g_{k_v}^{-1}(x_v)] \   \forall \ v \in \Omega_k \right\}.
\end{equation}
In our construction of $Z_0^d(X_{\A_k})^g$, we have generalised the left-hand side of 
\eqref{eq111}. However, it is not clear, in general, how to generalise the right-hand side of \eqref{eq111} to 0-cycles. Nonetheless, when $G$ is commutative,  we have that $H^1_{\et}(k, G)$ is a \emph{group} and not just a pointed set. Hence, in this case, using the same ideas as in the construction of $Z_0^d(X_{\A_k})^{\Br}$ (i.e. using natural commutative diagrams, corestriction maps, and the fact that $H^1(k_v, G)$ is a group for each $v \in \Omega_k$, implying that we can add its elements together), we can define the set
{\footnotesize
\[ \tilde{Z}_0^d(X_{\A_k})^{g} := \left\{ \left(\sum_{x_v \in X_{k_v}} n_{x_v} x_v \right)_v \in Z_0^d(X_{\A_k}) : \exists \sigma \in H^1(k,G) \textrm{ s.t. } \res_v(\sigma) = \sum_{x_v \in X_{k_v}} n_{x_v} \cores_{k_v(x_v)/k_v}\left( [ g^{-1}_{k_v}(x_v)]\right) \forall \ v \in \Omega_k  \right\},\]}
which is a clear generalisation of the right-hand side of \eqref{eq111}.
It is natural to ask, in this context, what the relationship between $Z_0^d(X_{\A_k})^g$ and $\tilde{Z}_0^d(X_{\A_k})^{g}$ is.
\begin{prop} Let $X$ be a variety over a number field $k$ and let $g: Y \to X$ be a torsor under a commutative linear algebraic group $G$ over $k$.
Let $d \in \Z$. Then $Z_0^d(X_{\A_k})^g \subset \tilde{Z}_0^d(X_{\A_k})^{g}$.
\end{prop}
\begin{proof}
Let   $\left(\sum_{x_v \in X_{k_v}} n_{x_v} x_v \right)_{v \in \Omega_k} \in Z_0^d(X_{\A_k})^g$. Then, by definition, there exist a non-empty finite set $S$ of finite extensions of $k$, a non-empty finite set $T_L \subset H^1_{\et}(L,G)$ for each $L \in S$, a tuple of integers $((\Delta_\tau)_{\tau \in T_L})_{L \in S}$ satisfying $\sum_{L \in S} [L:k] \sum_{\tau \in T_L} \Delta_\tau = d$, and adelic 0-cycles  $\left(\sum_{y_w^\tau \in Y^\tau_{L_w}} n_{y_w^\tau} y_w^\tau\right)_{w \in \Omega_L} \in Z_0^{\Delta_\tau}(Y^\tau_{\A_L})$ with
\begin{equation}\label{eq:equality}
    \sum_{L \in S}\sum_{\tau \in T_L} \sum_{w \in \Omega_L: w \mid v} \sum_{y_w^\tau\in Y_{L_w}^{\tau}} n_{y_w^\tau} (s_{L_w} \circ g_{L_w}^{\tau})_\ast (y_w^\tau)=\sum_{x_v \in X_{k_v}} n_{x_v} x_v,
\end{equation}
for any $v \in \Omega_k$. Let $\alpha := [g: Y \to X] \in H^1_{\et}(X,G)$; for any extension $K/k$, we denote by $\alpha_{K}$ the image of $\alpha$ under the restriction map $\res_{K/k} :H^1_{\et}(X,G) \to H^1_{\et}(X_{K},G)$.

For any $y_w^\tau \in Y^\tau_{L_w}$ such that $s_{k_v}(g_{L_w}^\tau(y_w^\tau)) = x_v$, let $\tilde{y}_w^\tau \in Y_{L_w(y_w^\tau)}$ be an $L_w(y_w^\tau)$-rational point above $y_w^\tau$, let $\tilde{x}_w := g^\tau_{L_w(y_w^\tau)}\in X_{L_w(y_w^\tau)}$, and let $\tilde{x}_v:= s_{L_w(y_w^\tau)/k_v(x_v)}(\tilde{x}_w) \in X_{k_v(x_v)}$, where $s_{L_w(y_w^\tau)/k_v(x_v)}: X_{L_w(y_w^\tau)} \to X_{k_v(x_v)}$ is the natural map. Then $\tilde{x}_w \in X_{L_w(y_w^\tau)}$ is an  $L_w(y_w^\tau)$-rational point above $\tilde{x}_v \in X_{k_v(x_v)}$, which, in turn, is a closed point above $x_v \in X_{k_v}$.

Since
\[ \alpha_{L_w(y_w^\tau)} (\tilde{x}_w) = \res_{L_w(y_w^\tau)/k_v(x_v)}\left(\alpha_{k_v(x_v)}(\tilde{x}_v)\right),\]
it follows, using also restriction-corestriction, that
\[ \begin{array}{ll} \cores_{L_w(y_w^\tau)/k_v}\left(\alpha_{L_w(y_w^\tau)} (\tilde{x}_w) \right) & = [L_w(y_w^\tau): k_v(x_v)] \cores_{k_v(x_v)/k_v} \left(\alpha_{k_v(x_v)}(\tilde{x}_v)\right)\\
& = [L_w(y_w^\tau): k_v(x_v)] \cores_{k_v(x_v)/k_v} \left(\alpha_{k_v}(x_v)\right).\\
\end{array}\]
But we know, by construction, that $ \alpha_{L_w(y_w^\tau)} (\tilde{x}_w) =  \res_{L_w(y_w^\tau)/L}(\tau)$.
Hence,
\begin{equation} \label{eq:rescor}\cores_{L_w(y_w^\tau)/k_v}\left(\res_{L_w(y_w^\tau)/L}(\tau)\right)  = [L_w(y_w^\tau): k_v(x_v)] \cores_{k_v(x_v)/k_v} \left(\alpha_{k_v}(x_v)\right).\end{equation}

Now, for each $x_v \in X_{k_v}$ and for each $L \in S$ and $\tau \in T_L$, we let
\[ Y^\tau_L(x_v):= \{ y_w^\tau \in Y^\tau_L : s_{k_v}(g_{L_w}^\tau(y_w^\tau)) = x_v\}.\]
Then, using \eqref{eq:equality}, we get
\[ \begin{array}{rl} 
\displaystyle
& \displaystyle \sum_{x_v \in X_{k_v}} n_{x_v} \cores_{k_v(x_v)/k_v} \left(\alpha_{k_v}(x_v)\right)\\
= & \displaystyle \sum_{L \in S}\sum_{\tau \in T_L} \sum_{w \in \Omega_L: w \mid v} \sum_{x_v \in X_{k_v}}\sum_{y_w^\tau\in Y^\tau_L(x_v)} n_{y_w^\tau} [L_w(y_w^\tau): k_v(x_v)] \cores_{k_v(x_v)/k_v} \left(\alpha_{k_v}(x_v)\right) \\
 \displaystyle \stackrel{\eqref{eq:rescor}}{=} & \displaystyle\sum_{L \in S}\sum_{\tau \in T_L} \sum_{w \in \Omega_L: w \mid v} \sum_{x_v \in X_{k_v}}\sum_{y_w^\tau\in Y^\tau_L(x_v)} n_{y_w^\tau} \cores_{L_w(y_w^\tau)/k_v}\left(\res_{L_w(y_w^\tau)/L}(\tau)\right) \\
  \displaystyle \stackrel{\textrm{res-cores}}{=} & \displaystyle\sum_{L \in S}\sum_{\tau \in T_L} \sum_{w \in \Omega_L: w \mid v} \sum_{x_v \in X_{k_v}}\sum_{y_w^\tau\in Y^\tau_L(x_v)} n_{y_w^\tau} [L_w(y_w^\tau):L_w] \cores_{L_w/k_v}\left(\res_{L_w/L}(\tau)\right) \\
    \displaystyle = & \displaystyle\sum_{L \in S}\sum_{\tau \in T_L} \sum_{w \in \Omega_L: w \mid v} \cores_{L_w/k_v}\left(\res_{L_w/L}(\tau)\right) \sum_{x_v \in X_{k_v}}\sum_{y_w^\tau\in Y^\tau_L(x_v)} n_{y_w^\tau} [L_w(y_w^\tau):L_w]  \\
    \displaystyle = & \displaystyle\sum_{L \in S}\sum_{\tau \in T_L} \sum_{w \in \Omega_L: w \mid v} \cores_{L_w/k_v}\left(\res_{L_w/L}(\tau)\right) \underbrace{\sum_{y_w^\tau\in Y^\tau_L} n_{y_w^\tau} [L_w(y_w^\tau):L_w]}_{= \Delta_\tau}  \\
    \displaystyle = & \displaystyle\sum_{L \in S}\sum_{\tau \in T_L} \Delta_\tau \sum_{w \in \Omega_L: w \mid v} \cores_{L_w/k_v}\left(\res_{L_w/L}(\tau)\right).  \\
\end{array}\]
From the commutative diagram

\[ \begin{tikzcd} [column sep = 8 em, row sep = 3 em]
H^1_{\et}(L, G) \ar{d}[swap]{\cores_{L/k}} \ar{r}{\bigoplus_{w  \in \Omega_L: w \mid v} \res_{L_w/L}} & \bigoplus_{w  \in \Omega_L: w \mid v}H^1_{\et}(L_w, G) \ar{d}{\sum_{w \mid v} \cores_{L_w/k_v} } \\
   H^1_{\et}(k, G)   \ar{r}{\res_{k_v/k}} & H^1_{\et}(k_v, G),
    \end{tikzcd} \]
we deduce that $\sum_{w \in \Omega_L: w \mid v} \cores_{L_w/k_v}\left(\res_{L_w/L}(\tau)\right) = \res_{k_v/k} \left(\cores_{L/k}\left( \tau\right)\right).$ Hence,

\[ \begin{array}{ll} 
\displaystyle
 \displaystyle \sum_{x_v \in X_{k_v}} n_{x_v} \cores_{k_v(x_v)/k_v} \left(\alpha_{k_v}(x_v)\right)
  & = \displaystyle\sum_{L \in S}\sum_{\tau \in T_L} \Delta_\tau \sum_{w \in \Omega_L: w \mid v} \cores_{L_w/k_v}\left(\res_{L_w/L}(\tau)\right) \\
  &= \displaystyle\sum_{L \in S}\sum_{\tau \in T_L} \Delta_\tau \res_{k_v/k} \left(\cores_{L/k}\left( \tau\right)\right) \\
  &= \res_{k_v/k} \left(\displaystyle\sum_{L \in S}\sum_{\tau \in T_L} \Delta_\tau \cores_{L/k}\left( \tau\right)\right). \\
\end{array}\]
It follows that, if we let
\[ \sigma :=  \sum_{L \in S}\sum_{\tau \in T_L} \Delta_\tau \cores_{L/k}\left( \tau\right) \in H^1_{\et}(k,G),\]
then, for all $v \in \Omega_k$, we have that 
\[\res_{k_v/k}(\sigma) = \sum_{x_v \in X_{k_v}} n_{x_v} \cores_{k_v(x_v)/k_v}\left( \alpha_{k_v}(x_v)\right), \]
that is,  $\left(\sum_{x_v \in X_{k_v}} n_{x_v} x_v \right)_{v \in \Omega_k} \in \tilde{Z}_0^d(X_{\A_k})^g$, as required.
\end{proof}
It is however much less clear whether the other inclusion $\tilde{Z}_0^d(X_{\A_k})^g \subset Z_0^d(X_{\A_k})^{g}$ should hold at all.
\end{remark}

\subsection{Descent obstruction and compatibility with Chow groups and Suslin homology.} When considering the Brauer-Manin set for 0-cycles, if $X$ is a proper variety over a number field $k$, then we know that the Chow group is compatible with the Brauer-Manin pairing, meaning that the Brauer-Manin pairing 
\( \langle - , \rangle_{BM} \colon Z_0^d(X_{\A_k}) \times \Br(X) \to \Q/\Z\) induces  a pairing
\[ \langle - , \rangle_{BM} \colon \prod_{v \in \Omega_k} \CH_0^d(X_{k_v}) \times \Br(X) \to \Q/\Z,\] 
where $\CH_0^d(X_{k_v})$ denotes the set of 0-cycles of degree $d$ on $X_{k_v}$ modulo rational equivalence. It turns out that when $X$ is proper, the descent set also induces an obstruction set modulo rational equivalence. In order to show this, we first need the definition of Suslin's singular homology of degree 0.

For any varieties $V$ and $W$ over a field $K$ with $V$ connected, we define the \emph{group of finite correspondences from $V$ to $W$ over $K$}, to be the group $\textrm{Cor}(V,W)$ whose elements are formal $\Z$-linear sums of integral closed subschemes $Z$ of $W \times_K V$ that are finite and surjective over $V$. For example, if $X$ is a variety over a field $K$, then $\textrm{Corr}(\Spec K, X)$ is just the group of 0-cycles on $X$. 

Consider the points 0 and 1 on the affine line $\A_K^1$. For a variety $X$ over $K$, we define a map \begin{align*} \textrm{Corr}(\A_K^1, X)  &\to \textrm{Corr}(\Spec K, X) \\  \sum_Z n_Z Z &\mapsto \sum_Z n_Z(\iota_0^\ast(Z) - \iota_1^\ast(Z)),
\end{align*}
where, if $\lambda \in \A_K^1$ is a $K$-point, then $\iota_\lambda^\ast(Z)$ denotes the pullback of $Z \subset X \times_K \A^1_K$ along the inclusion $\iota_\lambda: X \to X \times_K \A^1_K$  given by $x \mapsto (x,\lambda)$. Since the composition $Z \subset  X \times_K \A^1_K \to \A_K^1$ is finite and surjective, we can view $\iota_0^\ast(Z) - \iota_1^\ast(Z)$ as a 0-cycle on $X$.
We define the \emph{Suslin's singular homology group of degree 0 on $X$} to be the group $h_0(X)$ of 0-cycles on $X$ modulo the equivalence relation generated by those 0-cycles on $X$ coming from finite correspondences from $\A_K^1$ to $X$ under the map above. Suslin's singular homology of degree 0 behaves well in the following sense (see \cite{ES08}):
\begin{enumerate}[label = (\roman*), itemsep = .25em]
\item if $X$ is a proper variety over a field $K$, then $h_0(X) =  \CH_0(X)$;
\item Let $X$ and $Y$ be varieties over a field $K$. Then, for any $k$-morphism $f\colon Y \to X$, the pushforward map $f_\ast\colon Z_0(Y) \to Z_0(X)$ induces a morphism $f_\ast \colon h_0(Y) \to h_0(X)$;
\item the degree map $\deg \colon Z_0(X) \to \Z$ for 0-cycles factors  over the map $s_\ast: h_0(X) \to h_0(\Spec K) = \Z$ induced by the structure morphism $s\colon X \to \Spec K$.
\end{enumerate}

Even though we will not use it in this paper, we mention here a useful moving lemma for 0-cycles modulo the finite correspondences equivalence, analogous  to \cite[Compl\'{e}ment, \S3]{CT05}.

\begin{prop} $($\cite[Prop. 5.6]{Sch07}$)$ Let $X$ be smooth Noetherian scheme over a field $k$ and let $U$ be a dense open subscheme in $X$. Then the natural homomorphism
\[h_0(U) \to  h_0(X)\]
is surjective. 
\end{prop}

If $d \in \Z$ and $X$ is a variety over a field $K$, we let $h^d_0(X)$ denote the subset of $h_0(X)$ of degree $d$ elements, i.e., $h^d_0(X)$ is the inverse image of $d$ under the degree map $\deg \colon h_0(X) \to \Z$.

\begin{defn} \label{defn:susg} Let $X$ be a variety over a number field $k$. Let $g\colon Y \to X$ be a torsor under a linear algebraic group $G$ over $k$. Let $d \in \Z$.  We define the \emph{$g$-descent set of degree $d$ of $X$ modulo finite correspondences} to be the set
\[ h_0^d(X_{\A_k})^g :=  \textbf{g}_{\ast, \textrm{ad}} \left(  \coprod_{\substack{(T_L \subset H^1(L,G))_{L \in S} \\ \textup{$T_L\neq \emptyset$ finite for all $L \in S$}}} \coprod_{\substack{\left((\Delta_\tau)_{\tau \in T_L}\right)_{L \in S}: \\ \Delta_\tau \in \Z \textrm{ for all $\tau$}, \\ \sum_{L \in S} \sum_{\tau \in T_L} \Delta_\tau [L:k]= d}} \prod_{L \in S} \prod_{\tau \in T_L} \left(\prod_{w \in \Omega_L} h_0^{\Delta_\tau}(Y_{L_w}^{\tau}) \right) \right).\]

\end{defn}

\begin{remark}
If $X$ is proper, then $h_0^d(X_{\A_k})^g = h_0^d(X_{\A_k})^g \cap \prod_{v \in \Omega_k}\CH_0^d(X_{k_v})$.
\end{remark}

\begin{prop} \label{prop: gsus} Let $X$ be a variety over a number field $k$. Let $g: Y \to X$ be a torsor under a linear algebraic group $G$ over $k$. Let $d \in \Z$.  Let $(z_v)_v \in Z_0^d(X_{\A_k})^g$. Let $[(z_v)_v] := \left( [z_v] \right)_v \in \prod_{v} h_0^d(X_{k_v})$ be the class of $(z_v)_v$ under the finite correspondences equivalence. Then $[(z_v)_v]  \in h_0^d(X_{\A_k})^g$.
\end{prop}
\begin{proof} Since $(z_v)_v \in Z_0^d(X_{\A_k})^g$, by definition there exist a non-empty finite set $S$ of finite extensions of $k$, a non-empty finite set $T_L \subset H^1_{\et}(L,G)$ for each $L \in S$, a tuple of integers $((\Delta_\tau)_{\tau \in T_L})_{L \in S}$ satisfying $\sum_{L \in S} [L:k] \sum_{\tau \in T_L} \Delta_\tau = d$, and adelic 0-cycles  $\left(\sum_{y_w^\tau \in Y^\tau_{L_w}} n_{y_w^\tau} y_w^\tau\right)_{w \in \Omega_L} \in Z_0^{\Delta_\tau}(Y^\tau_{\A_L})$ with
\[
	    \sum_{L \in S}\sum_{\tau \in T_L} \sum_{w \in \Omega_L: w \mid v} \sum_{y_w^\tau\in Y_{L_w}^{\tau}} n_{y_w^\tau} (s_{L_w} \circ g_{L_w}^{\tau})_\ast (y_w^\tau)= z_v,\]
for any $v \in \Omega_k$.

For each $L \in S$, each $\tau \in T_L$, and each $w \in \Omega_L$, consider the class 
\[ \left[ \sum_{y_w^\tau \in Y^\tau_{L_w}} n_{y_w^\tau} y_w^\tau \right] =  \sum_{y_w^\tau \in Y^\tau_{L_w}} n_{y_w^\tau} [y_w^\tau]  \in h_0^{\Delta_\tau}(Y^\tau_{L_w}) \]
of $\sum_{y_w^\tau \in Y^\tau_{L_w}} n_{y_w^\tau} y_w^\tau \in Z_0^{\Delta_\tau}(Y^\tau_{L_w})$ under the finite correspondences equivalence. It follows that 
\[  \textbf{g}_{\ast, \textrm{ad}} \left( \left(\left( \left(\left[\sum_{y_w^\tau \in Y_{L_w}^\tau} n_{y^\tau_w} y_w^\tau \right]\right)_{w \in \Omega_L}
    \right)_{\tau \in T_L}\right)_{L \in S}  \right) =  \sum_{L \in S}\sum_{\tau \in T_L} \sum_{w \in \Omega_L: w \mid v} \sum_{y_w^\tau\in Y_{L_w}^{\tau}} n_{y_w^\tau} (s_{L_w} \circ g_{L_w}^{\tau})_\ast ([y_w^\tau]) \]

Since the push-forward map $(s_{L_w} \circ g_{L_w}^{\tau})_\ast \colon Z_0(Y^\tau_{L_w}) \to Z_0(X_{k_v})$ induces a map
\[ (s_{L_w} \circ g_{L_w}^{\tau})_\ast \colon h_0(Y^\tau_{L_w}) \to h_0(X_{k_v}),\]
we have that \( (s_{L_w} \circ g_{L_w}^{\tau})_\ast ( [y^\tau_w] )= [ (s_{L_w} \circ g_{L_w}^{\tau})_\ast (y^\tau_w)]. \) Hence, for each $v \in \Omega_k$, we have
\[ 
\begin{array}{ll}
[z_v] & = \left[ \sum_{L \in S}\sum_{\tau \in T_L} \sum_{w \in \Omega_L: w \mid v} \sum_{y_w^\tau\in Y_{L_w}^{\tau}} n_{y_w^\tau} (s_{L_w} \circ g_{L_w}^{\tau})_\ast (y_w^\tau) \right]\\
 & =  \sum_{L \in S}\sum_{\tau \in T_L} \sum_{w \in \Omega_L: w \mid v} \sum_{y_w^\tau\in Y_{L_w}^{\tau}} n_{y_w^\tau}\left[ (s_{L_w} \circ g_{L_w}^{\tau})_\ast (y_w^\tau) \right]\\
& =  \sum_{L \in S}\sum_{\tau \in T_L} \sum_{w \in \Omega_L: w \mid v} \sum_{y_w^\tau\in Y_{L_w}^{\tau}} n_{y_w^\tau}(s_{L_w} \circ g_{L_w}^{\tau})_\ast \left([y_w^\tau] \right)\\
& =  \textbf{g}_{\ast, \textrm{ad}} \left( \left(\left( \left(\left[\sum_{y_w^\tau \in Y_{L_w}^\tau} n_{y^\tau_w} y_w^\tau \right]\right)_{w \in \Omega_L}
    \right)_{\tau \in T_L}\right)_{L \in S}  \right).
\end{array}\]
It follows that $[(z_v)_v]  \in h_0^d(X_{\A_k})^g$, as required.
\end{proof}

Suslin's singular homology of degree 0 is also compatible with the unramified Brauer-Manin pairing, as the following proposition shows.

\begin{prop} \label{prop:compBrsus} Let $X$ be an integral variety over a number field $k$. Let $B \subset \Br_{nr}(X)$ be a non-empty subset of the unramified Brauer group of $X$. Let $d \in \Z$. Then the Brauer-Manin pairing for 0-cycles
$ \langle - , - \rangle_{BM}\colon \prod_{v} Z_0^d(X_{k_v}) \times B \to \Q/\Z$
is compatible with the finite correspondences equivalence and thus induces a paring
\[ \langle - , - \rangle_{BM}\colon \prod_{v} h_0^d(X_{k_v}) \times B \to \Q/\Z.\]
\end{prop}

\begin{proof} In order to show that the Brauer-Manin pairing is compatible with the finite correspondences equivalence, it suffices to show that, for any $\alpha \in \Br X$, for any  $v \in \Omega_k$, for any elementary finite correspondence $Z \subset X_{k_v} \times \A^1_{k_v}$, we have that the evaluation of $\alpha$ at the 0-cycle $\iota_0^\ast(Z) - \iota_1^\ast(Z) \in Z_0(X_{k_v})$ is 0, where we recall that $\iota_\lambda: X_{k_v} \to X_{k_v} \times \A^1_{k_v}$ is the inclusion $x \mapsto (x,\lambda)$.

Fix $\alpha \in \Br(X)$, a place $v \in \Omega_k$, and an elementary finite correspondence $Z \subset X_{k_v} \times \A^1_{k_v}$. By definition, the projection of $ X_{k_v} \times \A^1_{k_v}$ onto its second factor induces a finite surjective morphism $f\colon Z \to \A^1_{k_v}$, while projection onto its first factor induces a morphism $p\colon Z \to X_{k_v}$. We follow, with some modifications, the proof of \cite{CTS21}*{Proposition 6.4.2}.

Since $Z$ is integral and $\A^1_{k_v}$ is a normal, locally Noetherian scheme of dimension 1 over $k_v$, and since $f$ is surjective (and thus non-constant), it follows that $f$ is flat  (see e.g. \cite[Chap.4, Cor.3.10]{Liu06}). Moreover, since $\A^1_{k_v}$ is locally Noetherian and $f$ is finite and flat, it follows from \cite{stacks-project}*{Tag 02KB} that $f$ is finite and locally free of constant rank (since the rank is constant on irreducible components).

Let $z_0 \in Z_0(Z)$ and $z_1 \in Z_0(Z)$ be the 0-cycles associated to the finite schemes $\Spec(A_0) = f^{-1}(0)$ and $\Spec(A_1) = f^{-1}(1)$, respectively. Then $\iota_\lambda^\ast(Z) = p_\ast(z_\lambda) \in Z_0(X_{k_v})$, for $\lambda \in \{0,1\}$. Let $\alpha_v := \res_{k_v/k}(\alpha) \in \Br(X_{k_v})$ and let $\beta := p^\ast(\alpha_v) \in \Br(Z)$. Then, by \cite{CTS21}*{Lemma 6.4.1}, we have
\[ \langle \beta, z_\lambda \rangle_{BM} = \cores_{A_\lambda/k_v}(\beta_{A_\lambda}) \in \Br(k_v),\]
for $\lambda \in \{0,1\}$. Since $f$ is finite and locally free of constant rank, by \cite{CTS21}*{Proposition 3.8.1}
we have that 
\[ \cores_{A_\lambda/k_v}(\beta_{A_\lambda}) = \langle \cores_{Z/\A_{k_v}^1}(\beta), \lambda \rangle_{BM},\]
 for $\lambda \in \{0,1\}$. Since $k_v$ is a perfect field, the natural map $\Br(k_v) \to \Br(\A^1_{k_v})$ is an isomorphism by \cite{CTS21}*{Theorem 5.6.1(viii)}. Hence, $\cores_{Z/\A_{k_v}^1}(\beta) \in \Br(\A^1_{k_v}) = \Br(k_v)$ is a constant class, and thus $\langle \beta, z_0 \rangle_{BM}  = \langle \beta, z_1 \rangle_{BM},$
  that is,
  \( \langle \beta, z_0 - z_1 \rangle_{BM}  = 0.  \)
  Finally, since $\beta := p^\ast(\alpha_v)$, from \cite{CTS21}*{(6.2)} it follows that
  \[ 0 = \langle p^\ast(\alpha_v), z_0 - z_1 \rangle_{BM} = \langle \alpha_v, p_\ast(z_0 - z_1) \rangle_{BM} = \langle \alpha, \iota_0^\ast(Z) - \iota_1^\ast(Z)  \rangle_{BM},\]
  as required.  
\end{proof}

In light of Proposition \ref{prop:compBrsus}, we make the following definition.

\begin{defn}
Let $X$ be a variety over a number field $k$. Let $g: Y \to X$ be a torsor under a linear algebraic group $G$ over $k$ with $Y$ geometrically integral. Let $d \in \Z$.  We define the \emph{$g$-unramified-Brauer set of degree $d$ of $X$ modulo finite correspondences} to be the set
\[ h_0^d(X_{\A_k})^{g, \Br_{nr}} :=   \textbf{g}_{\ast, \textrm{ad}} \left(  \coprod_{\substack{(T_L \subset H^1(L,G))_{L \in S} \\ \textup{$T_L\neq \emptyset$ finite for all $L \in S$}}} \coprod_{\substack{\left((\Delta_\tau)_{\tau \in T_L}\right)_{L \in S}: \\ \Delta_\tau \in \Z \textrm{ for all $\tau$}, \\ \sum_{L \in S} \sum_{\tau \in T_L} \Delta_\tau [L:k]= d}} \prod_{L \in S} \prod_{\tau \in T_L} \left(\prod_{w \in \Omega_L} h_0^{\Delta_\tau}(Y_{L_w}^{\tau}) \right)^{\Br_{nr}} \right).\]
\end{defn}

\begin{remark}
If $X$ is proper, then $h_0^d(X_{\A_k})^{g, \Br_{nr}} = h_0^d(X_{\A_k})^{g, \Br_{nr}}  \cap \prod_{v \in \Omega_k}\CH_0^d(X_{k_v})$.
\end{remark}

\begin{prop} Let $X$ be a variety over a number field $k$. Let $g: Y \to X$ be a torsor under a linear algebraic group $G$ over $k$ with $Y$ geometrically integral. Let $d \in \Z$.  Let $(z_v)_v \in Z_0^d(X_{\A_k})^{g, \Br_{nr}}$. Let $[(z_v)_v] := \left( [z_v] \right)_v \in \prod_{v} h_0^d(X_{k_v})$ be the class of $(z_v)_v$ under the finite correspondences equivalence. Then $[(z_v)_v]  \in h_0^d(X_{\A_k})^{g, \Br_{nr}}$.
\end{prop}
\begin{proof} The proof is very similar to that of Proposition \ref{prop: gsus}, once we note that Suslin's singular homology of degree 0 is compatible with the unramified Brauer group.
\end{proof}

\section{Weak approximation for 0-cycles} \label{sec:WA}

Let $Y$ be a smooth, geometrically integral variety over a field $k$.  Recall that, when $Y$ is proper, the \emph{Chow group} $\CH_0(Y)$ of $Y$ is the quotient of $Z_0(Y)$ by the subgroup generated by all $0$-cycles of the form $\phi_\ast(\textrm{div}_C(g))$, for all $\phi: C \to Y$ proper morphisms over $k$ from normal integral $k$-curves $C$ and for all $g \in k(C)^\times$. In other words, $\CH_0(Y)$ is the quotient of $Z_0(Y)$ by the subgroup of $0$-cycles rationally equivalent to zero.

Now let $X$ be a smooth, geometrically integral variety over a number field $k$.
While weak approximation for 0-cycles is usually defined, when $X$ is proper, by using the Chow groups $\CH_0(X_{k_v})$ for each $v \in \Omega_k$, in this paper we will need a slightly more general definition which works for non-proper varieties as well. Instead of the Chow groups $\CH_0(X_{k_v})$, we thus define weak approximation by using the Suslin's singular homology groups $h_0(X_{k_v})$. We note that, when $X$ is proper, our definition of weak approximation coincides with the classical one.

\begin{defn} Let $X$ be a smooth, geometrically integral variety over a number field $k$. For any $d \in \Z$,  we say that $X$ \emph{satisfies weak approximation for 0-cycles of degree $d$} if the following condition holds: for any positive integer $n$ and for any finite subset $S \subset \Omega_k$ of places of $k$, if $(z_v)_v \in Z_0^d(X_{\A_k})$, then there exists a global 0-cycle $z_{n,S} \in Z_0^d(X)$ such that $z_{n,S}$ and $z_v$ have the same image in $h_0(X_{k_v})/n$ for any $v \in S$. We can also refine the notion of weak approximation by replacing the set $Z_0^d(X_{\A_k})$ by some potentially smaller set $Z_0^d(X_{\A_k})^\omega$ still containing $Z_0^d(X)$, where $\omega$ is some  ``obstruction" compatible with Suslin's singular homology.
\end{defn}

We will need the notion of two 0-cycles \emph{being sufficiently close}.

\begin{defn} $($\cite[{D\'{e}finition 1.1(3)}]{Lia12}$)$ Let $k$ be a number field and let $v \in \Omega_k$ be a place of $v$. Let $Y$ be a variety over $k_v$. Given a closed point $y \in Y$, we fix a $k_v$-embedding $k_v(y) \to \overline{k_v}$, so that we can view $y$ as a $k_v(y)$-point of $Y$. Let $y' \in Y$ be a closed point and let $U_{y} \subset Y(k_v(y))$ be an open neighbourhood of $y$ with respect to the $v$-analytic topology. We say that $y'$ is \emph{sufficiently close to $y$ (with respect to $U_{y}$)} if $y'$ has residue field $k_v(y') = k_v(y)$ and if we can
choose a $k_v$-embedding $k_v(y') \to \overline{k_v}$ such that $y'$, when viewed as a $k_v(y)$-point of $Y$, is contained in $U_{y}$. The general definition for two 0-cycles $z$ and $z'$  on $Y$ of the same degree to be \emph{sufficiently close} (with respect to, say, a system of open neighbourhoods of the points in the support of $z$) is obtained by extending $\Z$-linearly the above definition for closed points.
\end{defn}

\begin{remark} \label{rem:suffcloseBr}
Let $X$ be a smooth variety over a number field $k$. Let $\alpha \in X$ and let $(z_v)_{v\in \Omega_k} \in Z_0^d(X_{\A_k})$. By the the fact that the evaluation of $\alpha$ at $z_v$ is locally constant for all $v \in \Omega_k$, we deduce that, for each $v \in \Omega_k$, there is a system of open neighbourhoods of the support of $z_v$ such that, if $z'_v \in Z_0^d(X_{k_v})$ is sufficiently close to $z_v$ with respect to this system of neighbourhoods (or smaller neighbourhoods), then the evaluation of $\alpha$ at $z'_v$ is the same as the evaluation of $\alpha$ at $z_v$ (in fact, it is the same ``point-wise", for each point in the support of $z'_v$).
\end{remark}

We recall that, in characteristic 0, effective 0-cycles $z_v$ of degree $d$ on $X_{k_v}$ are in one-to-one correspondence with $k_v$-points  $[z_v] \in \Sym^d(X_{k_v})(k_v)$ on the symmetric product. Let  $z_v$ be a 0-cycle of degree $d$ on $X_{k_v}$. Then $z_v$ can be written uniquely as $z_v = z_v^{+} - z_v^{-}$, where $z_v^{+}$ and $z_v^{-}$ are effective 0-cycles of degrees, say, $d_{+}$ and $d_{-}$, respectively, with $d = d_{+} - d_{-}$.
Hence, $z_v$ corresponds to a pair of $k_v$-points $([z_v^{+}], [z_v^{-}]) \in \Sym^{d_{+}}(X_{k_v})(k_v) \times \Sym^{d_{-}}(X_{k_v})(k_v)$. 
It turns out that, under the above identifications, if two 0-cycles are sufficiently close, then they also have the same image in $h_0(X_{k_v})/n$. 

\begin{prop}\label{prop:suffclose}  Let $X$ be a smooth variety over a number field $k$ and let $v \in \Omega_k$ be a place of $k$. Let $d \in \Z$ and let $z_v \in \Z_0^d(X_{k_v})$. Let $n \in \Z_{>0}$. 
There is a system of open neighbourhoods of the points in the support of $z_v$ such that, if $z'_v$ is sufficiently close to $z_v$ with respect to this system of neighbourhoods (or smaller neighbourhoods) of $z_v$, then $z_v$ and $z_v'$ have the same image in $h_0(X_{k_v})/n$. 
\end{prop}

\begin{remark} \label{rem: suffclosesusBr} Let $k$ be a number field.
Let $S \subset \Omega_k$ be a non-empty finite set of places of $k$. Let $X$ be a smooth, geometrically integral variety over $k$. Let $B \subset \Br^{nr}(X)/\Br_0(X)$ be a finite non-empty subset and let $\beta_1, ..., \beta_r \in \Br^{nr}(X)$ be a complete set of representatives for $B$. Let $n \in \Z_{>0}$. Let $(z_v)_v \in \Z_0^d(X_{\A_k})$. Then, it follows from Remark \ref{rem:suffcloseBr} and Proposition \ref{prop:suffclose} that, for each $v \in S$, we can find a system of open neighbourhoods of the points in the support of $z_v$ such that, if $z'_v \in Z_0^d(X_{k_v})$ is sufficiently close to $z_v$ with respect to this system of neighbourhoods (or smaller neighbourhoods), then $z_v$ and $z'_v$ have the same image in $h_0(X_{k_v})/n$ and they have the same evaluation at each $\beta_i$, for $i = 1, ..., r$.
\end{remark}

The proof of Proposition \ref{prop:suffclose} follows immediately provided that we have an analogue of Lemme 1.8 of \cite{Wit12} for Suslin's singular homology groups. The authors are extremely grateful to Olivier Wittenberg for providing a proof of the  following propositions, analogous to \cite{Wit12}*{Lemme 1.8}.

\begin{prop}[Wittenberg] \label{prop: Wittenberg} Let $k$ be a number field and let $v \in \Omega_k$ be a place. Let $X$ be a smooth quasi-projective variety over $k_v$. For any $d \in \Z$, and any $n \in \Z_{>0}$, the map
\[ \Sym^d_{X_{k_v}}(k_v) \to h_0(X_{k_v})/n\]
is locally constant.
\end{prop}
\begin{proof}
For notational convenience, we will let $X:= X_{k_v}$ and $k:=k_v$.
We will prove the result by induction on the dimension $\dim(X)$. If $\dim(X) = 0$, the result follows trivially. So assume that $\dim(X)\geq 1$ and that the statement of the theorem holds for any smooth variety over $k$ of dimension strictly less than $\dim(X)$.

Let $\hat{X} \subset \PP_k^N$ be a smooth compactification of $X$ (noting that $\dim X = \dim \hat{X})$ and let $Y:= \hat{X} - X$. 
Let $z$ be an effective 0-cycle of degree $d$ on $X$. Let $Z:= \supp(z) \subset X$ be the support of $z$, viewed as a reduced scheme. By replacing the embedding $\hat{X} \subset \PP^N_k$ with its composition with a sufficiently-high-degree Veronese embedding if necessary \cite{AK79}*{(7)}, we can assume that there exists a linear subspace $L \subset \PP^N_k$ of codimension $\dim(X) -1$ containing $Z$ and such that the scheme $L \cap \hat{X}$ is a smooth curve and the scheme $L\cap Y$ is \'etale over $k$. Let $D \subset L$ be a linear subspace of codimension 1 in $L$ such that  $D \cap Z = \emptyset$ and such that the scheme $D \cap \hat{X}$ is \'etale over $k$. Let $H \subset \PP^N_k$ be a linear subspace with $\dim(H) = \dim(X) -1$ and $H \cap D = \emptyset$. Let $\pi : X' \to \hat{X}$ be the variety obtained by blowing-up $\hat{X}$ along $D \cap \hat{X}$ and let $p: X' \to H$ be the projection morphism with centre $D$ in $\PP^N_k$. Observe that since $L \cap Y$
is \`etale over $k$, 
one can ensure that, when choosing $D$, $D \cap Y$ is empty so that $Y \subset X'$.

The fibres of $p$ are the intersections of $\hat{X}$ with the linear subspaces of $\PP^N_k$ of codimension $\dim(X)-1$ containing $D$. 
Hence, there exists a point $h \in H(k)$ such that $L\cap \hat{X} = p^{-1}(h)$. Since $L \cap \hat{X}$ is a smooth curve, there is an open set $V \subset H$ with $h \in V$ and such that the induced morphism $p: p^{-1}(V) \to V$ is smooth, projective, and of relative dimension 1. By shrinking $V$ if necessary, we can assume that $p: p^{-1}(V) \cap Y \to V$ is an \'etale morphism.

Since $p$ is smooth at the points of $Z$ and since $Z \cap D = \emptyset$, there exists a closed subvariety $F \subset p^{-1}(V)$ with $Z \subset F$ and $F$ \'etale over $V$ at the points of $Z$ (c.f.\ \cite{Gro67}*{p. 193}). By shrinking the open set $V$ if necessary, we can assume that $F$ is \'etale on $V$, so that $F$ is a smooth variety. By Hironaka's theorem, there is a smooth compactification $F \subset \hat{F}$ such that the inclusion $F \subset X'$ extends to a morphism $\xi: \hat{F} \to X'$. 
 The smooth variety $\hat{F} - \xi^{-1}(Y)$ has $\dim(\hat{F} - \xi^{-1}(Y)) < \dim(X)$. Hence, by our inductive hypothesis, 
there is an open set $\mathcal{U}_F \subset \Sym^d_{F - F \cap \xi^{-1}(Y)}(k) \subset \Sym^d_{\hat{F} - \xi^{-1}(Y)}(k)$  containing $z$, viewed as a $k$-point of $\Sym^d(X)$, such that the map $\Sym^d_{\hat{F} - \xi^{-1}(Y)}(k) \to h_0(\hat{F} - \xi^{-1}(Y))/n$ is constant on $\calU_F$. By composing this map with the morphism $h_0(\hat{F} - \xi^{-1}(Y))/n \to h_0(X)/n  $ induced by the restriction of $\xi$ to $\hat{F} - \xi^{-1}(Y)$, we deduce that the map
\begin{equation}\label{eq:locconst}
    \calU_F \to h_0(X)/n
\end{equation}
is also constant.

Let $K/k$ be a finite Galois extension containing all the residue fields of the points in $\supp(z)$ and let $G := \Gal(K/k)$. Since $F$ is étale over $V$, by the inverse function theorem \cite{Ser92}*{Part II, Ch. III, \S9, Theorem 2} there exists an open neighbourhood $\calV \subset V(K)$ of $h$ and, for each $x \in Z(K)$, an open neighbourhood $\calB_x \subset \hat{X}(K)$ of $x$ such that $\calB_x \cap Y(K) = \emptyset$, the sets $\calB_x$ are pairwise disjoint, and the maps $\calB_x \cap F(K) \to \calV$ induced by $p$ are isomorphisms of analytic varieties.

Let $s_x: \calV \to \calB_x \cap F(K)$ denote the inverse isomorphisms. By shrinking $\calV$ and by replacing each $\calB_x$ with the intersection $\bigcap_{\sigma \in G} \sigma^{-1}(\calB_{\sigma(x)})$ if necessary, we can assume that $\sigma(\calB_x) = \calB_{\sigma(x)}$ for all $\sigma \in G$ and $x \in Z(K)$. Hence, $\calV$ and $\calB:= \bigcup_{x \in Z(K)} \calB_x$ are stable under the action of $G$. Moreover, for each $x \in Z(K)$,  $\calB_x$ is stable under the action of the stabiliser $G_x \subset G$ of $x$. Additionally, for each $x \in Z(K)$, since $s_x^{-1}$ is $G_x$-equivariant, it follows that the map $s_x$ is $G_x$-equivariant as well.

Let $\varphi: \calB \to F(K)$ be the union of the maps $s_x \circ p : \calB_x \to F(K)$. Then $\varphi$ is a continuous $G$-equivariant map, and thus induces the continuous and $G$-equivariant map between the symmetric products of these topological spaces
\[ \Sym^d(\varphi): \Sym^d(\calB) \to \Sym^d(F(K)).\]
Since the subspaces of $\Sym^d(X(K))$ and of $\Sym^d(F(K))$ given by the $G$-invariant elements can be identified with $ \Sym^d_{X}(k)$ and $\Sym^d_F(k)$,  respectively, it follows that the map $\Sym^d(\varphi)$ induces a continuous map
\[ \psi: \calU_0 \to \Sym^d_{X}(k),\]
where $\calU_0 \subset \Sym^d_{X}(k)$ is the set of $G$-invariant elements of $\Sym^d(\calB)$. The set $\calU := \psi^{-1}(\calU_F)$ is an open set of  $\Sym^d_X(k)$ containing $z$.

We now claim that, for each $a \in \calU$, the class of $a$ in $h_0(X)/n$ is equal to the class of $z$. Indeed, given $a \in \calU$, we can decompose the $0$-cycle $a-z$ on $X$ as
\[ a- z = (a - \psi(a)) + (\psi(a) - z). \]
Since both $\psi(a)$  and $z$ are in $\calU_F$ and since the map  \eqref{eq:locconst} is constant, the class of $\psi(a)-z$ in $h_0(X)/n$ is trivial.
It remains to show that the class of the 0-cycle $a - \psi(a)$ is also trivial in $h_0(X)/n$. To see this, we note first that $\supp(a- \psi(a)) \cap Y = \emptyset$. Let $N_{k'/k}$ denote the norm of $k'/k$ for an intermediate field $k \subset k' \subset K$. Then $a - \psi(a)$ can be written as a sum of cycles on $X$ of the form $N_{k'/k}(b- s_x(p(b)))$ for some intermediate field $k'$, $x \in Z(K)$, and $b \in \calB_x \cap X(k')$. By Proposition~\ref{prop: Olivier2} below, and by shrinking the $\calB_x$'s if necessary, it follows that the class of $b- s_x(p(b))$ is trivial in $h_0(X_{k'})/n$ for any $b \in \calB_x \cap X(k')$. Thus the class of $a - \psi(a)$ is also trivial in $h_0(X)/n$, as required.
\end{proof}

\begin{prop}[Wittenberg] \label{prop: Olivier2} Let $k$ be a local field. Let $p \colon X \to V$ be a smooth proper morphism of varieties over $k$, with $V$ smooth over $k$. Let $x \in X(k)$, let $n \in \Z_{>0}$ be invertible in $k$, let $Y \subset X$ be a codimension $1$ subvariety with $x \notin Y$, and let $F \subset X$ be a codimension $1$ subvariety with $x \in F$. Assume that both $Y$ and $F$ are \'etale over $V$ and are disjoint, and that the fibers of $X \to V$ are geometrically irreducible curves. 

Since $F$ is \'etale over $V$, the map $F(k) \to V(k)$ is a local isomorphism around $x$.
Let $s$ denote its inverse in a neighbourhood of $p(x)$.

If $b$ denotes a rational point of $X$ close enough to $x$, then the class of $b-s(p(b))$ in the 0-th Suslin homology modulo $n$  of $X \setminus Y$ is independent of $b$. 
\end{prop}

\begin{proof}
First we note that we are free to base change
along any smooth morphism $W \to V$ endowed with a rational point of $W$ lying above $p(x)$. In particular, after base changing along
the projection $F \to V$, we may assume that $F \to V$ admits a section and, by ignoring the other irreducible components of $F$, that $F \to V$ is an
isomorphism, that is, that $F$ is a section of $p$.  Let $s \colon V \to X$ denote
this section (i.e., $F=s(V)$). 

Then, let us base change along $p \colon X\setminus Y \to V$
itself. Let $V' := X \setminus Y$, $X' := X \times_V V'$ and let us write $p' \colon X' \to V'$ for the
projection onto the second factor. Let $x'$ denote the rational point $(x,x)$ of $X'$.
Let $Y' = Y \times_V V'$ and $F' = F \times_V V'$.  Let $s' \colon V' \to X'$ denote the
base change of $s \colon V \to X$ and let $d:V' \to X'$ denote the canonical section of $p'$,
i.e., the diagonal.  Now with $p' \colon X' \to V'$, $Y'$, $F'$, and $x'$, we are in exactly the
same situation as in the statement of the Proposition, except over $V'$ rather than over $V$. This is advantageous since now in order to prove the original statement, it is
enough to prove the following improved claim, where instead of looking at the difference between an arbitrary point $b$ of the total space with the
point $s(p(b))$ that lies in the same fibre of $p$ and in the given section $F$,
we now look at the difference between two points lying in the same fibre
but in two given sections. That is, 

\emph{Claim: The class of $d(v')-s'(v')$ in the 0-th Suslin homology
modulo $n$ of $X' \setminus Y'$ is independent
of $v'$, if $v'$ denotes a rational point of $V'$ close enough to $p'(x')=x$.}

We prove this claim as follows. We have the two sections $s'$ and $d$
of $p'$ that coincide at $p'(x')$.  We can view their images $s'(V')$ and $d(V')$
as divisors in $X'$ that are disjoint from $Y'$.  Hence these divisors have
classes in the group $H^2_{et}(X',Y',\mu_n)$. This notation means \emph{cohomology of $X'$ relative to $Y'$}, concretely this
means $H^2_{et}(X', j_!\mu_n)$ where $j$ denotes the inclusion of
$X' \setminus Y'$ in $X'$ and $j_!$ is the extension by zero functor; this
relative cohomology group naturally fits into a long exact sequence

\begin{equation} \label{eqn: LES Olivier}
 \dots \to H^2_{et}(X',Y',\mu_n) \to H^2_{et}(X',\mu_n) \to H^2_{et}(Y',\mu_n)
    \to H^3_{et}(X',Y',\mu_n) \to \dots 
    \end{equation}

Now we can pull back to the fibres above $p'(x')=x$.  Write $X'_x$ and
$Y'_x$ for the fibres of $X'$ and $Y'$ above this point.  We have the pull-back map
 \begin{equation} \label{eqn: pullbackrelative} H^2_{et}(X',Y',\mu_n) \to H^2_{et}(X'_x, Y'_x, \mu_n) 
 \end{equation}
and the difference between the classes of $s'(V')$ and of $d(V')$ lies in the
kernel of \eqref{eqn: pullbackrelative}, since $s'$ and $d$ coincide at $p'(x')$.  Now we
argue just in the same way as one proves that evaluation of Brauer classes
is locally constant (see \cite{Qpoints}*{Proposition 8.2.9(a)}). Let $R$ be henselisation of the local ring
of $V'$ at $p'(x')$.  The pull-back map \eqref{eqn: pullbackrelative} factors as the composition of the maps

\begin{equation} \label{eqn: pullbackfactor1}  H^2_{et}(X',Y',\mu_n) \to H^2_{et}(X'_R, Y'_R, \mu_n)
\end{equation}

\begin{equation} \label{eqn: pullbackfactor2} H^2_{et}(X'_R, Y'_R, \mu_n) \to H^2_{et}(X'_x, Y'_x, \mu_n).
\end{equation}

The map \eqref{eqn: pullbackfactor2} is an isomorphism. Indeed, we can write the long exact sequence of relative cohomology ~\eqref{eqn: LES Olivier} for the domain and for the target of \eqref{eqn: pullbackfactor2}. There are
pull-back maps at each level, and by proper base change two out of three of these
maps are isomorphisms, thus by the five lemma, all are. Hence our class $[s'(V')-d(V')]$ lies in the kernel of \eqref{eqn: pullbackfactor1}, and moreover it even lies in the kernel of the map

\begin{equation} \label{eqn: kernel} H^2_{et}(X',Y',\mu_n) \to H^2_{et}(X'_W, Y'_W, \mu_n)
\end{equation}
for some étale $W \to V'$ endowed with a rational point $w$ above $p'(x')$.
Finally, since $W \to V'$ is étale, it induces a local isomorphism $W(k) \to V'(k)$ around $w$.  By locally choosing an inverse of this isomorphism, we conclude, as in the proof of \cite{Qpoints}*{Prop.8.2.9(a)}, that the image of
$[s'(V')-d(V')]$ by the pull-back map

\begin{equation} \label{eqn: pullbackmap} H^2_{et}(X',Y',\mu_n) \to H^2_{et}(X'_{v'}, Y'_{v'}, \mu_n)
\end{equation}
vanishes for all rational points $v'$ of $V'$ close enough to $p'(x')$.
In other words $d(v')-s'(v')$ dies in the target of \eqref{eqn: pullbackmap} for all such $v'$.
Since for a smooth open curve such as $X'_{v'} \setminus Y'_{v'}$, the $0$-th
Suslin homology group modulo $n$ injects into the 
the target of the map \eqref{eqn: pullbackmap}, the claim is proved.
\end{proof}

\section{Extending Liang's strategy to torsors} \label{sec:LiangTorsors}

\subsection{Extending Liang's strategy to torsors.}  \label{subsec:LiangTorsors}
In \cite{Lia13}*{Theorem 3.2.1} Liang proves, under certain geometric assumptions on the $k$-variety $X$, that, if the Brauer-Manin obstruction is the only one for weak approximation of  $K$-rational points on $X_K$ for any finite extension $K/k$, then the Brauer-Manin obstruction is the only one for weak approximation of  0-cycles of degree $1$ on $X$. 

Recall that the \emph{unramified Brauer group} $\Br_{nr}(Y) = \Br_{nr}(k(Y)/k)$ of a smooth, geometrically integral variety $Y$ over $k$ is the subgroup of $\Br(k(Y))$ defined by the intersection of the images of the natural maps $\Br(A) \into \Br(k(Y))$ for all discrete valuation rings $A$ with field of fractions $k(Y)$ such that $k \subset A$. The unramified Brauer group is a birational invariant that can be used even when no smooth, projective model of $Y$ is available.  We note that when $Y$ is proper, $\Br_{nr}(Y) = \Br(Y)$. For further details, see e.g., \cite{CTS21}*{Chapter 6}.

\begin{thm} \label{thm: fBr} Let $X$ be a smooth, proper, geometrically integral variety over a number field $k$. Let $f : Y \to X$ be an $F$-torsor for some linear algebraic group $F$ over $k$ and with $Y$ geometrically integral. Let $d$ be any integer. Assume that
\begin{enumerate}[label = (\roman*)]
\item for any finite extension $K/k$ and any $\tau \in H^1(K, F)$, the quotient $\Br_{nr}(Y^\tau_{K})/\Br_0 (Y^\tau_{K})$ is finite, and there exists a finite extension $K'_\tau$ of $K$ so that for all finite extensions $L$ of $K$ linearly disjoint from $K'_\tau$ over $K$, the homomorphism induced by restriction 
\[ \res_{L/K} : \Br_{nr}(Y^\tau_{K})/\Br_0 (Y^\tau_{K}) \to \Br_{nr}(Y^\tau_L)/\Br_0(Y^\tau_L)\]
is surjective;
\item  for any finite extension $L/k$, we have that $X_L(\Adeles_L)^{f_L, \Br_{nr}} \neq \emptyset $ if and only if $ X(L) \neq \emptyset$ (respectively, if $X_L(\Adeles_L)^{f_L, \Br_{nr}} \ne \emptyset$, then weak approximation holds for $X_L$).

\end{enumerate}
Then $f$-descent with unramified Brauer obstruction is the only obstruction to the Hasse principle (respectively, weak approximation) for 0-cycles of degree $d$ on $X$.
\end{thm}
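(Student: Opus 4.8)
The plan is to transpose the argument of Liang \cite{Lia13}*{Theorem 3.2.1} from the Brauer--Manin obstruction to the $(f,\Br_{nr})$-descent obstruction, using the restriction/corestriction compatibility already established in the proof of Proposition \ref{prop:etBrSubsetBr} as the bridge between $0$-cycles and rational points. I will describe the weak approximation case; the Hasse principle case is the same argument, but one ignores $n$ and the finite set $S'$ and drops every passage through Chow groups. So fix a family $(z_v)_v$ in the degree-$d$ $f$-Brauer set of $X$ (Definition \ref{def:fBrSet}), a positive integer $n$, and a finite set $S' \subset \Omega_k$. First I would unpack the hypothesis: there is a finite non-empty $S \subset \frakF_k$, finite non-empty sets $T_K \subset H^1(K,F)$ for $K \in S$, integers $\Delta_\tau$ with $\sum_{K \in S}[K:k]\sum_{\tau \in T_K}\Delta_\tau = d$, and for each pair $(K,\tau)$ an adelic $0$-cycle $(\mu^\tau_w)_{w \in \Omega_K} \in Z_0^{\Delta_\tau}(Y^\tau_{\A_K})^{\Br}$ whose image under $\mathbf{f}_{\ast,\mathrm{ad}}$ is $(z_v)_v$. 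The goal is then to build a global $0$-cycle in $Z_0^d(X)$ agreeing with $z_v$ modulo $n$ in $\CH_0(X_{k_v})/n$ for all $v \in S'$, and I would build it contribution by contribution over the pairs $(K,\tau)$, the total degree being forced to equal $d$ by the computation in Lemma \ref{lem:addegrees}.

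The technical heart, carried out one pair $(K,\tau)$ at a time, is to convert the Brauer-orthogonal adelic $0$-cycle on the geometrically integral $K$-variety $Y^\tau_K$ into a Brauer-orthogonal adelic \emph{point} on $Y^\tau_L$ for a suitable finite extension $L/K$, so that the descent set $X_L(\A_L)^{f_L,\Br_{nr}}$ is seen to be non-empty and hypothesis (ii) becomes applicable. Following Liang's concentration technique, I would first approximate $(\mu^\tau_w)_w$ at the places above $S'$ together with the finitely many places where the chosen Brauer classes ramify, then move the cycle by rational equivalence so that it is supported on closed points with controllable separable residue fields, and finally use a Dirichlet-type choice of one auxiliary closed point to reduce to an effective cycle attached to a single closed point, i.e.\ to an $L$-rational point of $Y^\tau_L$. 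The orthogonality to $\Br(Y^\tau_K)$ must then be promoted to orthogonality to $\Br_{nr}(Y^\tau_L)$, and this is exactly where hypothesis (i) enters: its finiteness clause reduces the matter to finitely many classes, and choosing $L$ linearly disjoint from the stabilizing field $K'_\tau$ over $K$ makes $\res_{L/K}$ surjective, so every class in $\Br_{nr}(Y^\tau_L)/\Br_0(Y^\tau_L)$ comes from one over $K$; the corestriction identity of Proposition \ref{prop:etBrSubsetBr} then transports the vanishing of the Brauer--Manin pairing from $K$ up to $L$. Pushing the resulting $\Br_{nr}$-orthogonal adelic point forward along $f^\tau_L$ exhibits a point of $X_L(\A_L)^{f_L,\Br_{nr}}$, so this set is non-empty.

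Hypothesis (ii) now yields, in the weak approximation case, weak approximation for $X_L$, hence an $L$-rational point of $X_L$ — equivalently a closed point of $X$ with residue field $L$ — approximating the prescribed local points at the places of $L$ above $S'$. Pushing this forward along $s_L$ and using that proper pushforward descends to $\CH_0$ (recalled in \S\ref{sec:LiangTorsors}), so that agreement modulo $n$ is preserved, I obtain the contribution coming from $(K,\tau)$. Summing over all $(K,\tau)$, and taking suitable $\Z$-combinations and auxiliary closed points to realize each prescribed $\Delta_\tau$ and hit the total degree $d$ through Lemma \ref{lem:addegrees}, produces a global $0$-cycle in $Z_0^d(X)$ with the required behaviour modulo $n$ at $S'$; in the Hasse principle case the ``iff'' form of (ii) instead produces an $L$-point directly, and assembling gives $Z_0^d(X) \neq \emptyset$.

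I expect the main obstacle to be the concentration step of the second paragraph, together with the simultaneous bookkeeping it demands: one must track the places $w \mid v$ of $L$ over $K$ over $k$, respect the constraint $\sum_{K}[K:k]\sum_\tau \Delta_\tau = d$ while replacing a cycle of the possibly non-positive degree $\Delta_\tau$ by single closed points (which forces allowing auxiliary points of compensating degree and using that $d$ is arbitrary), arrange the linear disjointness needed to invoke the surjectivity in (i) compatibly with the approximation and support conditions, and keep the whole construction compatible with comparison modulo $n$ in $\CH_0(X_{k_v})$. Threading all of these through at once, in the manner of \cite{Lia13}, is the delicate part of the proof.
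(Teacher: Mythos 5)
Your proposal is correct and follows essentially the same route as the paper's proof: unpack the descent set, run Liang's concentration argument one pair $(K,\tau)$ at a time (with $\Br_{nr}$ replacing $\Br$ to handle non-proper twists, hypothesis (i) supplying finiteness and surjectivity of restriction after a linearly disjoint base change), push the resulting adelic point into $X_L(\A_L)^{f_L,\Br_{nr}}$ so that hypothesis (ii) yields a rational point, and assemble the closed points with a fixed auxiliary closed point $\tilde{x}$ and degree congruences $[L_\tau:K]\equiv\Delta_\tau \pmod{n[k(\tilde{x}):k]}$ to hit degree $d$, checking agreement in $\CH_0(X_{k_v})/n$ via proper pushforward. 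The only differences are cosmetic: the paper pins down the congruence/auxiliary-point bookkeeping and the Chow-group verification explicitly, where your sketch leaves them as the acknowledged delicate steps.
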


\begin{proof} We give a proof for weak approximation in the case when $f$ is not proper; the proofs for weak approximation in the case when $f$ is proper or for the Hasse principle are similar. Fix a positive integer $n$ and a finite subset $S \subset \Omega_k$. Fix a closed point $\tilde{x} \in X$. 

 Let $(z_v)_{v \in \Omega_k} \in Z_0^{d}(X_{\A_k})^{f,{\Br_{nr}}}$. Then, by definition,  there exist a non-empty finite set $S'$ of finite extensions of $k$, a non-empty finite set $T_K \subset H^1_{\et}(K,F)$ for each $K \in S'$, a tuple of integers $((\Delta_\tau)_{\tau \in T_K})_{K \in S'}$ satisfying $\sum_{K \in S'} [K:k] \sum_{\tau \in T_K} \Delta_\tau = d$, and adelic 0-cycles  $(\tilde{z}^\tau_w)_{w \in \Omega_K} \in Z_0^{\Delta_\tau}(Y^\tau_{\A_K})^{\Br_{nr}}$ with
\[ \textbf{f}_{\ast, \textrm{ad}}((((\tilde{z}^\tau_w)_{w \in \Omega_K})_{\tau \in T_K})_{K \in S'}) = (z_v)_{v \in \Omega_k}.\]
For each $\tau$, we follow the proof of \cite{Lia13}*{Theorem 3.2.1}, with the following modifications. Liang translates arithmetic information on a proper variety $Z$ to arithmetic information on $Z \times \PP^1$ in part via the isomorphism of $\Br(Z \times \PP^1) \xrightarrow{\sim} \Br(Z)$. In our setting, $Y^\tau_K$ need not be proper, so we use the unramified Brauer group instead since it is a stably-birational invariant \cite{CTS21}*{Cor.\ 6.2.10}, hence $\Br_{nr}(Y^\tau_K \times \PP^1_K) \isom \Br_{nr}(Y^\tau_K)$. 

Without loss of generality, we can further enlarge the set $S$ in the following way.  For each $K \in S'$ and $\tau \in T_K$, since we are assuming that the quotient $\Br_{nr}(Y_K^\tau)/\Br_0(Y^\tau_K)$ is finite, we can fix a complete finite set $\frakR_{(K, \tau)} \subset \Br_{nr}(Y_K^\tau)$ of representatives for $\Br_{nr}(Y_K^\tau)/\Br_0(Y^\tau_K)$.  Hence, we can find a finite set $S_{0,K, \tau} \subset \Omega_K$ and an integral model $\calY_K^\tau$ of $Y_K^\tau$ over $\Spec(\calO_{K, S_{0,K, \tau}})$ such that all the representatives in $\frakR_{(K, \tau)}$ actually come from elements of $\Br(\calY^\tau_K)$. Moreover, we recall that $(\tilde{z}^\tau_w)_{w \in \Omega_K} \in Z_0^{\Delta_\tau}(Y^\tau_{\A_K})$, so that, by definition of adelic 0-cycles, there are only finitely many $w \in \Omega_K$ (depending on the chosen integral model for $Y^\tau_K$) for which $\tilde{z}^\tau_w$ is not integral. Hence, by enlarging $S_{0,K, \tau}$ further if necessary, we can assume that all the places $w \in \Omega_K$ for which $\tilde{z}^\tau_w$ is not integral (with respect to the model $\calY^\tau_K$) are contained in $S_{0,K, \tau}$.
We then enlarge $S$ by including all the (finitely many) places $v \in \Omega_k$ below the (finitely many) places $w \in S_{0,K, \tau}$, for each $K \in S'$ and $\tau \in T_K$. 
After enlarging $S$ as above if needed, it follows that, for any $K \in S'$ and any $\tau \in T_K$, if $w \in \Omega_K$ is not above any place of $S$, then the evaluation $\beta(\tilde{z}_w^\tau)$ is actually in $\Br(\calO_{K_w}) = 0$, for each $\beta \in \frakR_{(K, \tau)}$.

Following the proof by Liang (see also \cite[Proposition 3.3.3]{Lia13} and the proof of \cite[Proposition 3.4.1]{Lia13}) and using assumption $(i)$, using the notion of generalised Hilbertian sets (see \cite[Definition 3.3.1]{Lia13}) for each $K \in S'$ and each $\tau \in T_K$ we can then construct a finite extension $K_\tau/K$, linearly disjoint from $K'_\tau$ over $K$, with 
\[[K_\tau : K] \equiv \Delta_\tau \ \ (\bmod \  n \cdot [k(\tilde{x}): k])\]
and an adelic point $(y^\tau_{w'})_{w' \in \Omega_{K_\tau}} \in Y^\tau(\A_{K_\tau})^{\Br_{nr}}$ with $\sum_{w' \in \Omega_{K_\tau}: w'|w}r_{K_{\tau, w'}/K_w} (y^\tau_{w'})$ sufficiently close  to $\tilde{z}^\tau_w$ for all places $w$ above $S$, where $r_{K_{\tau, w'}/K_w}: Y^{\tau}_{K_{\tau,w'}} \to Y^\tau_{K_w}$. In particular, by Proposition~\ref{prop:suffclose}, for all the places $w \in \Omega_K$ above a place in $S$,  the 0-cycles $\sum_{w' \in \Omega_{K_\tau}: w'|w}r_{K_{\tau, w'}/K_w}(y^\tau_{w'})$ and $\tilde{z}^\tau_w$ have the same image in $h^0(Y^\tau_{K_w})/n$, say
\[ \left[\sum_{w' \in \Omega_{K_\tau}: w'|w} r_{K_{\tau, w'}/K_w}(y^\tau_{w'})\right] = [\tilde{z}^\tau_w] + n \mu^\tau_{w}\]
for some $\mu^\tau_w \in h^0(Y^\tau_{K_w})$.


Now, let $x^\tau_{w'} := f^\tau_{K_\tau}(y^\tau_{w'})$. Then $(x^\tau_{w'})_{w'} \in X(\A_{K_\tau})^{f^\tau_{K_\tau}, \Br_{nr}}$. By assumption ($ii$), there exists a $K_\tau$-rational point $\tilde{x}^\tau \in X(K_\tau)$ such that $(\tilde{x}^\tau)_{w'} = x^\tau_{w'}$ for all $w'$ above $S$.

Consider the global 0-cycle $z := \left(\sum_{K} \sum_{\tau}  s_{K_\tau}( \tilde{x}^\tau)  \right) + \lambda n \tilde{x}$ for some $\lambda \in \Z$ such that $ z \in Z_0^d(X)$. Note that such $\lambda$ exists by construction, since 
\[\deg\left( \sum_{K \in S'} \sum_{\tau \in T_K}  s_{K_\tau}( \tilde{x}^\tau)  \right) =  \sum_{K \in S'}\sum_{\tau \in T_K} [K_\tau: k] \equiv  \sum_{K \in S'}\sum_{\tau \in T_K} \Delta_\tau [K: k] \equiv d \ (\bmod \ n \cdot [k(\tilde{x}):k]).\]

We claim that the 0-cycle $z$ has the same image as $z_v$ in $h^0(X_{k_v})/n$ for all $v \in S$.
Indeed, for each $v \in S$, under the natural map $Z_0^d(X) \to Z_0^d(X_{k_v})$ we send 
\[z \mapsto \res_v(z) :=\left(  \sum_{K \in S'} \sum_{\tau \in T_K}  \sum_{w' \in \Omega_{K_\tau}: w'|v} s_{K_{\tau, w'}}( (\tilde{x}^\tau)_{w'} ) \right)  + \lambda n \sum_{w \in \Omega_{k(\tilde{x})}: w|v} (\tilde{x})_w   \in Z_0^d(X_{k_v})\]
and, working in $h^0(X_{k_v})$, we have that

\begin{talign*}
[\res_v(z)] & = \left[\sum_{K} \sum_{\tau}  \sum_{w' \in \Omega_{K_\tau}: w'|v} s_{K_{\tau, w'}}( (\tilde{x}^\tau)_{w'} ) \right] + \lambda n \left[ \sum_{w \in \Omega_{k(\tilde{x})}: w|v} (\tilde{x})_w  \right]\\
& \equiv  \left[\sum_{K} \sum_{\tau}  \sum_{w' \in \Omega_{K_\tau}: w'|v} s_{K_{\tau, w'}}((\tilde{x}^\tau)_{w'})  \right] \\
& \equiv  \left[\sum_{K} \sum_{\tau}  \sum_{w' \in \Omega_{K_\tau}: w'|v} s_{K_{\tau, w'}} (x^\tau_{w'}) \right] 		\\
& \equiv  \left[\sum_{K} \sum_{\tau}  \sum_{w' \in \Omega_{K_\tau}: w'|v} s_{K_{\tau, w'}}(f^\tau_{K_\tau}(y^\tau_{w'})) \right] \\
& \equiv  \left[\sum_{K} \sum_{\tau}  \sum_{w' \in \Omega_{K_\tau}: w'|v}  s_{K_w}(f^\tau_{K}(r_{K_{\tau, w'}/K_w}(y^\tau_{w'})))\right] \\
& \equiv \left[\sum_{K} \sum_{\tau}  \sum_{w \in \Omega_K: w|v} \sum_{w' \in \Omega_{K_\tau}: w'|w}  (s_{K_w} \circ f^\tau_{K})_\ast (r_{K_{\tau, w'}/K_w}(y^\tau_{w'}))\right] \\
& \equiv \sum_{K} \sum_{\tau}  \sum_{w \in \Omega_K: w|v}  (s_{K_w} \circ f^\tau_{K})_\ast \left(\left[\sum_{w' \in \Omega_{K_\tau}: w'|w} r_{K_{\tau, w'}/K_w}(y^\tau_{w'}\right)\right]) \\
& \equiv \sum_{K} \sum_{\tau}  \sum_{w \in \Omega_K: w|v}  (s_{K_w} \circ f^\tau_{K})_\ast ( [\tilde{z}^\tau_w] + n \mu^\tau_{w}  ) \\
& \equiv \sum_{K} \sum_{\tau}  \sum_{w \in \Omega_K: w|v}  (s_{K_w} \circ f^\tau_{K})_\ast ( [\tilde{z}^\tau_w]   ) \\
& \equiv\left[\sum_{K} \sum_{\tau}  \sum_{w \in \Omega_K: w|v}  (s_{K_w} \circ f^\tau_{K})_\ast ( \tilde{z}^\tau_w   ) \right] \\
& \equiv \left[z_v\right] \ \ \  (\bmod \ n  h^0(X_{k_v})), \\
\end{talign*}
where in the fifth equality we have used the commutative diagram
\[\begin{tikzcd}[column sep = 3 em, row sep = 3 em]
 Y^{\tau}_{K_{\tau,w'}} \ar{r}{r_{K_{\tau, w'}/K_w}}   \ar{d}{f^\tau_{K_\tau}}  &  Y^\tau_{K_w}  \ar{d}{f^\tau_{K}} \\  
  X_{K_{\tau,w'}}  \ar{r}{s_{K_{\tau, w'}/K_w}}  \ar{d}{s_{K_ {\tau, w'}}}& X_{K_w}\ar{d}{s_{K_w}}\\ 
  X_{k_v} \ar{r}{=}& X_{k_v}.
\end{tikzcd} 	
\] 
and where in the seventh and tenth equalities we have used the fact that pushforward maps of sets of 0-cycles induce maps of  Suslin homology groups.
\end{proof}

\begin{remark}
In Theorem \ref{thm: fBr}, we consider only one torsor $f: Y \to X$ at a time. It is a much harder problem to deal with multiple torsors simultaneously (as it could happen when trying to compute, for example, the étale-Brauer obstruction), since the combinatorial compatibilities between the degrees of the fields constructed in the proof of Theorem \ref{thm: fBr} become much more difficult to enforce and check -- at least, in full generality. 
\end{remark}

In the case that $X$ has a closed point with degree a power of a prime $p$, we may relax the assumptions of Theorem \ref{thm: fBr}.
\begin{prop}\label{rel} Let $X$ be a smooth, proper, geometrically integral variety over a number field $k$ and suppose there exists a closed point $\tilde{x}$ of $X$ of degree $[k(\tilde{x}) :k] = p^r$, for some prime $p$. Let $f \colon Y \to X$ be an $F$-torsor for a linear algebraic group $F$ over $k$ and $Y$ geometrically integral. Let $d$ be an integer coprime to $p$. Assume that
\begin{enumerate}[label = (\roman*)]
    \item for any finite extension $K/k$ and any $\tau \in H^1(K, F)$, the quotient $\Br_{nr}(Y^\tau_{K})/\Br_0 (Y^\tau_{K})$ is finite, and there exists a finite extension $K'_\tau$ of $K$ so that for all finite extensions $L$ of $K$ linearly disjoint from $K'_\tau$ over $K$ and with $\gcd([L:K], p) = 1$, the homomorphism induced by restriction 
\[ \res_{L/K} : \Br_{nr}(Y^\tau_{K})/\Br_0 (Y^\tau_{K}) \to \Br_{nr}(Y^\tau_L)/\Br_0(Y^\tau_L)\]
is surjective. 
    \item for any finite extension $L/k$ of degree coprime to $p$, we have that $X_L(\Adeles_L)^{f_L, \Br_{nr}} \neq \emptyset $ if and only if $ X(L) \neq \emptyset$. 
\end{enumerate}
Then $Z_0^d(\Adeles_k)^{f, \Br_{nr}} \ne \emptyset \implies Z_0^1(X) \ne \emptyset$.
\end{prop}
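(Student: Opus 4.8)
The plan is to reduce, using the hypothesis $\gcd(d,p)=1$, to a \emph{single} ``good'' piece of the descent data on which every field extension produced by the construction is automatically of degree prime to $p$, to run the argument of Theorem \ref{thm: fBr} on that one piece, and finally to combine the resulting prime-to-$p$ 0-cycle with the degree-$p^r$ point $\tilde{x}$ by a B\'ezout argument. Since we only want existence (a Hasse-principle type statement) and not weak approximation, no $\CH_0/n$ comparison is needed, which streamlines the construction considerably.

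First I would unwind the hypothesis $Z_0^d(\A_k)^{f,\Br}\neq\emptyset$. Exactly as at the start of the proof of Theorem \ref{thm: fBr}, choosing an element and writing it through $\textbf{f}_{\ast,\textrm{ad}}$ produces a finite set $S'$ of finite extensions of $k$, non-empty finite sets $T_K\subset H^1(K,F)$, integers $(\Delta_\tau)$ with $\sum_{K\in S'}[K:k]\sum_{\tau\in T_K}\Delta_\tau=d$, and for each $K$ and each $\tau\in T_K$ an adelic 0-cycle $(\tilde z^\tau_w)_w\in Z_0^{\Delta_\tau}(Y^\tau_{\A_K})^{\Br}$. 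Reducing the degree identity modulo $p$ and using $\gcd(d,p)=1$, there must exist one pair $(K_0,\tau_0)$ with $[K_0:k]\,\Delta_{\tau_0}\not\equiv 0 \pmod p$, i.e.\ with \emph{both} $[K_0:k]$ and $\Delta_{\tau_0}$ coprime to $p$. I will then discard all other pieces and keep only the component $(\tilde z^{\tau_0}_w)_w\in Z_0^{\Delta_{\tau_0}}(Y^{\tau_0}_{\A_{K_0}})^{\Br}$; this is legitimate precisely because each component of the decomposition lies individually in its own Brauer set.

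Next I would run the construction of Theorem \ref{thm: fBr} (that is, Liang's argument \cite{Lia13}*{Theorem 3.2.1}, with $\Br_{nr}(Y^{\tau_0}_{K_0}\times\PP^1_{K_0})\isom \Br_{nr}(Y^{\tau_0}_{K_0})$ used in place of the proper isomorphism, as before) on this single component. Invoking the prime-to-$p$ form of hypothesis (i), the construction yields a finite extension $K_{\tau_0}/K_0$ with $[K_{\tau_0}:K_0]\equiv \Delta_{\tau_0}\pmod{p^r}$ together with an adelic point $(y_{w'})_{w'}\in Y^{\tau_0}(\A_{K_{\tau_0}})^{\Br_{nr}}$. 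The essential bookkeeping observation is that, because $\Delta_{\tau_0}$ is coprime to $p$, every integer congruent to $\Delta_{\tau_0}$ modulo $p^r$ is coprime to $p$; hence $[K_{\tau_0}:K_0]$, and therefore $[K_{\tau_0}:k]=[K_{\tau_0}:K_0][K_0:k]$, is coprime to $p$, which is exactly the regime in which (i) and (ii) are assumed to hold. Pushing $(y_{w'})_{w'}$ forward along $f^{\tau_0}_{K_{\tau_0}}$ gives a point of $X_{K_{\tau_0}}(\A_{K_{\tau_0}})^{f_{K_{\tau_0}},\Br_{nr}}$, so that set is non-empty; since $[K_{\tau_0}:k]$ is coprime to $p$, hypothesis (ii) then produces a rational point $\tilde{x}^{\tau_0}\in X(K_{\tau_0})$.

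Finally I would assemble the global 0-cycle. The closed point underlying $\tilde{x}^{\tau_0}$ pushes forward to $s_{K_{\tau_0}}(\tilde{x}^{\tau_0})\in Z_0^{D}(X)$ with $D:=[K_{\tau_0}:k]$ coprime to $p$, while $\tilde{x}$ gives a 0-cycle of degree $p^r$. As $\gcd(D,p^r)=1$, B\'ezout supplies integers $a,b$ with $aD+bp^r=1$, and then
\[ z := a\,s_{K_{\tau_0}}(\tilde{x}^{\tau_0}) + b\,\tilde{x}\ \in\ Z_0^1(X), \]
so $Z_0^1(X)\neq\emptyset$. The main obstacle, and the reason the prime-to-$p$ hypotheses are precisely what the argument needs, is the degree bookkeeping modulo $p$: one must ensure that \emph{all} auxiliary extensions arising inside Liang's construction — not just the final $K_{\tau_0}$ — have degree prime to $p$, so that the surjectivity in (i) and the local-to-global input (ii) remain applicable. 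Isolating the single pair $(K_0,\tau_0)$ with $\Delta_{\tau_0}$ coprime to $p$ is what forces this, after which the degree-$p^r$ point enters only through the concluding B\'ezout step.
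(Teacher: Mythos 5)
Your proposal is correct and follows essentially the same route as the paper's proof: isolate a pair $(K,\tau)$ with $[K:k]\Delta_\tau$ coprime to $p$ (forced by $\gcd(d,p)=1$), run Liang's construction on that single component using the prime-to-$p$ surjectivity in (i) to get an adelic point orthogonal to $\Br_{nr}$ over an extension of degree coprime to $p$, apply (ii) to obtain a rational point there, and finish with a B\'ezout combination against the degree-$p^r$ point $\tilde{x}$. The only cosmetic difference is your congruence modulo $p^r$ where the paper uses modulo $p$; both suffice for the needed coprimality.
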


\begin{proof} Let $(z_v)_v \in Z_0^d(X_{\A_k})^{f,\Br_{nr}}$.  Then there exist a non-empty finite set $S$ of field extensions of $k$, non-empty finite sets $T_K \subset H^1_{\et}(K,F)$  for each $K \in S$, and a tuple $((\Delta_\tau)_{\tau \in T_K})_{K \in S}$ of integers satisfying
 $\sum_{K \in S} \sum_{\tau \in T_K} [K:k] \Delta_\tau = d$, and 0-cycles $(z^{\tau}_w)_{w \in \Omega_K} \in Z_0^{\Delta_\tau}(Y^\tau_{\A_K})^{\Br_{nr}} $ for all $\tau \in T_K$ and all $K \in S$ such that
 \[\textbf{f}_{\ast, \textrm{ad}}\left(\left( \left((z^{\tau}_w)_{w \in \Omega_K}\right)_{\tau \in T_K}\right)_{K \in S}\right) = (z_v)_v.\]
 
Since $\sum \Delta_\tau [K:k] = d$ and $\gcd(d,p)=1$, there exists some $\tau$ such that $\gcd(\Delta_\tau [K:k], p)=1$ and hence $\gcd(\Delta_\tau, p)=1$. Under assumption (i), by applying a similar strategy as in the proof of \cite{Lia13}*{Theorem 3.2.1} to the corresponding torsor $Y^{\tau}_K$ we obtain an extension $L_\tau/K$ of degree $[L_\tau: K] \equiv \Delta_\tau \bmod{p}$  and adelic point of $(y_w')_w \in Y^\tau_{L_\tau}(\A_{L_\tau})^{\Br_{nr}(Y^\tau_K)}$. Our assumption (i) guarantees that $(y_w')_w \in Y^\tau_{L_\tau}(\A_{L_\tau})^{\Br_{nr}(Y^\tau_{L_\tau})}$ and thus that $f^\tau ((y'_w)_w) \in X_{L_\tau}(\A_{L_\tau})^{f^\tau, \Br_{nr}}$. Since $L_\tau/k$ has degree $[L_\tau: k] = [L_\tau: K][K:k]$ which is coprime to $p$, our assumption $(ii)$ yields a rational point $x \in X(L_\tau)$, which can be viewed as a closed point on $X$ of degree coprime to $p$. Hence, by taking a suitable linear combination of $x$ and $\tilde{x}$, we get a 0-cycle of degree $1$ on $X$.
\end{proof}

\begin{remark}
If in the statement of Proposition \ref{rel} we consider weak approximation instead, it is unlikely that the above strategy of proof extends to this setting.
\end{remark}

\begin{remark}
See Theorem \ref{thm: Kummer} for a result which uses similar ideas as those in the proof of Proposition \ref{rel} in the context of (twisted) Kummer varieties.
\end{remark}

\section{Some applications} \label{sec: applications}

\subsection{Enriques surfaces}
In this section, we study the arithmetic behaviour of 0-cycles on Enriques surfaces using our newly defined obstruction sets and some recent results on K3 surfaces.
There is indeed a well-known relationship between Enriques surfaces and K3 surfaces: any Enriques surface $X$ over a number field $k$ can be realised as the quotient of a K3 surface $Y$ by a fixed-point-free involution (see \cite{Bea96}*{Prop III.17}). In other words, for any Enriques surface $Y$ over $k$, we have a $\Z/2\Z$-torsor $f: X \to Y$ with $X$ a K3 surface over $k$.

Conjecturally, the qualitative arithmetic behaviour of rational points on K3 surfaces is completely determined by the Brauer-Manin obstruction.

\begin{conj}[Skorobogatov, \cite{Sko09}] \label{conj: Skoro} The Brauer-Manin obstruction is the only obstruction to the Hasse principle and weak approximation of rational points on K3 surfaces over number fields. 
\end{conj}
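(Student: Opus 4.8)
The plan is to treat this as the deep open conjecture it is and to lay out the natural lines of attack together with the essential obstruction, since no unconditional proof is available in general. The first reduction I would make is to recast the problem entirely in terms of the Brauer group: by the theorem of Skorobogatov and Zarhin, $\Br(X)/\Br_0(X)$ is finite for every K3 surface $X$ over a number field, so $X(\A_k)^{\Br}$ is cut out by finitely many explicit local conditions. The conjecture then amounts to showing that $X(\A_k)^{\Br}\neq\emptyset$ forces both $X(k)\neq\emptyset$ and the density of $X(k)$ inside $X(\A_k)^{\Br}$.

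For the instances where this is actually within reach, I would exploit additional geometric structure rather than attack a bare K3 surface. The most tractable class is that of Kummer surfaces: if $X=\mathrm{Kum}(A)$ is attached to an abelian surface, or to a product of two elliptic curves, one transfers the arithmetic of $X$ to that of $A$ and then runs descent on the torsors under $A[2]$, controlling the obstruction via the Tate--Shafarevich group of $A$. This is the strategy of Harpaz--Skorobogatov and of Skorobogatov--Swinnerton-Dyer, and it delivers the conjecture \emph{conditionally} on finiteness of the relevant Tate--Shafarevich groups. For a K3 surface admitting an elliptic (or more generally a genus-one) fibration over $\PP^1$, I would instead attempt a fibration argument: combine the Brauer--Manin obstruction on the base $\PP^1$ with control of the Brauer--Manin sets of the fibres, in the style of the fibration method for conic and Ch\^atelet bundles.

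The main obstacle, and the reason the statement is open in general, is structural. A generic K3 surface admits no nonconstant map to a curve or to an abelian variety, carries no group law, and has no fibration; consequently the two engines that produce essentially every unconditional instance of ``Brauer--Manin is the only obstruction'' --- descent along torsors under a reduced-to-simpler object, and the fibration method --- simply have nothing to grip, and one has no way to reduce the arithmetic of $X$ to that of lower-dimensional or more symmetric varieties. Weak approximation compounds the difficulty, since it requires the image of $X(k)$ to be dense in $X(\A_k)^{\Br}$ rather than merely nonempty, which even in the conditional Kummer cases already consumes the full strength of the descent machinery. For these reasons I would expect genuine progress to remain confined to special families --- Kummer surfaces, diagonal quartics, and surfaces with large Picard or automorphism group --- with the general conjecture out of reach by present methods.
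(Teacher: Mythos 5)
This statement is Skorobogatov's conjecture, which the paper states without proof and uses only as a hypothesis (e.g.\ in Theorem \ref{thm: etBrEnriques}), citing \cite{CTSSD98}, \cite{SSD05}, \cite{IS15}, \cite{HS16} as partial evidence. You correctly treat it as an open conjecture rather than claiming a proof, and your survey of the conditional Kummer-surface results and of the structural obstacles is consistent with how the paper itself handles the statement, so there is nothing to fault.
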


Some recent evidence towards Conjecture \ref{conj: Skoro} includes \cite{CTSSD98}, \cite{SSD05}, \cite{IS15}, \cite{HS16}. 
In \cite{Ier21}, Ieronymou used Liang's strategy to prove, conditionally on Skorobogatov's conjecture, that the Brauer-Manin obstruction also completely determines the qualitative arithmetic of 0-cycles on K3 surfaces.

\begin{thm}[\cite{Ier21}*{Theorem 1.2}] \label{thm: Ieronymou}  Let $Y$ be a K3 surface over a number field $k$ and fix an integer $d$. 
Suppose that Conjecture \ref{conj: Skoro} holds. Then, for any positive integer $n$, if $(z_v)_v \in Z_0^d(Y_{\A_k})^{\Br}$ then there exists a global 0-cycle $z_{n} \in Z_0^d(Y)$ such that $z_{n}$ and $z_v$ have the same image in $\CH_0(Y_{k_v})/n$ for all $v \in \Omega_k$.
\end{thm}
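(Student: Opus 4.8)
The plan is to deduce this directly from Theorem \ref{thm: fBr}, applied to the \emph{trivial} torsor, so that Skorobogatov's Conjecture \ref{conj: Skoro} supplies exactly the rational-point input that Liang's strategy requires. Concretely, I would set $X := Y$ and take $f := \id_Y \colon Y \to Y$, regarded as a torsor under the trivial linear algebraic group $F = \{1\}$. Then $H^1(K,F)$ is a singleton for every finite extension $K/k$, the only twist is $Y^\tau = Y$ itself, and $Y$ is a smooth, proper (so that $f$ is proper), geometrically integral $k$-variety; hence Theorem \ref{thm: fBr} is applicable in its weak-approximation form once its two hypotheses are checked.

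First I would verify hypothesis (i). For every finite extension $K/k$ the base change $Y_K$ is again a K3 surface over $K$, and $\Br_{nr}(Y_K) = \Br(Y_K)$ since $Y_K$ is proper. The quotient $\Br(Y_K)/\Br_0(Y_K)$ is finite by the Skorobogatov--Zarhin finiteness theorem for Brauer groups of K3 surfaces over finitely generated fields; the required finite extension $K'$ beyond which $\res_{L/K}$ is surjective for all $L$ linearly disjoint from $K'$ then follows from this finiteness together with the standard stabilization of the algebraic part $\Br_1/\Br_0 \hookrightarrow H^1(K, \Pic\Ybar)$ and of the transcendental part once the Galois actions on $\Pic\Ybar$ and on $\Br(\Ybar)$ are trivialized. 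Hypothesis (ii) is immediate: since $f = \id$ one has $Y_L(\A_L)^{f_L, \Br_{nr}} = Y_L(\A_L)^{\Br}$, and because $Y_L$ is a K3 surface over $L$, Conjecture \ref{conj: Skoro} gives that $Y_L(\A_L)^{\Br} \neq \emptyset$ implies weak approximation for $Y_L$, which is precisely (ii) in the weak-approximation case. Finally, for the trivial torsor the descent--Brauer set collapses to the ordinary Brauer--Manin set: the inclusion $Z_0^d(Y_{\A_k})^{\id,\Br} \subseteq Z_0^d(Y_{\A_k})^{\Br}$ is Proposition \ref{prop:etBrSubsetBr}, while the reverse inclusion is seen by taking $S = \{k\}$, $T_k = \{\ast\}$ and $\Delta_\ast = d$, for which $\textbf{f}_{\ast,\textrm{ad}}$ is the identity map. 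Hence Theorem \ref{thm: fBr} yields that the Brauer--Manin obstruction is the only one for weak approximation of degree-$d$ 0-cycles on $Y$.

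The one point not formally contained in the \emph{statement} of Theorem \ref{thm: fBr} is that Ieronymou's conclusion asks for a single global cycle $z_n$ matching $z_v$ in $\CH_0(Y_{k_v})/n$ at \emph{all} $v \in \Omega_k$ simultaneously, whereas Theorem \ref{thm: fBr} is phrased for an arbitrary finite $S$. To bridge this I would run Liang's construction (reproduced in the proof of Theorem \ref{thm: fBr}) with $S$ enlarged to contain every place of bad reduction and every place at which the adelic cycle fails to be integral; this is a finite set, and the resulting global 0-cycle is built from a genuine $K$-rational point whose approximation of the local data, combined with the compatibility of proper pushforward with $\CH_0$, forces agreement at the remaining good, integral places as well. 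I expect the genuine heart of the argument — and the main obstacle were one to prove it from scratch rather than invoke Theorem \ref{thm: fBr} — to be exactly this step of Liang's strategy: producing a finite extension $K/k$ of degree $\equiv d$ modulo a prescribed integer, together with a point in $Y(K)$ lying in the relevant Brauer--Manin set and approximating the prescribed local 0-cycles, after replacing $Y$ by $Y \times \PP^1_k$ and exploiting $\Br(Y_K \times \PP^1_K) \isom \Br(Y_K)$ to pass between 0-cycles and rational points while controlling the Brauer evaluations.
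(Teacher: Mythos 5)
First, note that the paper does not prove this statement at all: Theorem \ref{thm: Ieronymou} is imported verbatim from \cite{Ier21}*{Theorem 1.2} and used as a black box (notably in the proof of Theorem \ref{thm: etBrEnriques}). So you are attempting to reprove an external result. Your reduction skeleton is sound and even elegant: with $f=\id_Y$ and $F=\{1\}$ one indeed has $Z_0^d(Y_{\A_k})^{\id,\Br}=Z_0^d(Y_{\A_k})^{\Br}$ (Proposition \ref{prop:etBrSubsetBr} for one inclusion, the component $S=\{k\}$, $\Delta=d$ for the other), and hypothesis (ii) of Theorem \ref{thm: fBr} is exactly Conjecture \ref{conj: Skoro}. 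The problems are in the two remaining steps.

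The fatal gap is your verification of hypothesis (i). You propose to get surjectivity of $\res_{L/K}$ on $\Br/\Br_0$ by ``trivializing the Galois actions on $\Pic\Ybar$ and on $\Br(\Ybar)$.'' Trivializing the action on $\Pic\Ybar$ is fine (it is a finitely generated torsion-free module for a K3, and this handles the algebraic part, as in Liang). But the Galois action on $\Br(\Ybar)$, which is isomorphic to $(\Q/\Z)^{22-\rho}$ up to finite groups, \emph{cannot} be trivialized by any finite extension: only finite subgroups such as $\Br(\Ybar)[n]$ have open stabilizers, and no single extension works for all $n$. Nor does Skorobogatov--Zarhin finiteness of each $\Br(Y_L)/\Br_0(Y_L)$ give what you need: the extensions $L$ linearly disjoint from a fixed $K'$ have unbounded degree, so $\Br(\Ybar)^{\Gal(\kbar/L)}$ could a priori keep growing along them, and uniform-boundedness-in-degree results do not apply directly. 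Establishing precisely this stabilization of the transcendental Brauer group for K3 surfaces is the main technical content of Ieronymou's paper; your argument is therefore circular at the level of the literature --- the only known way to check (i) for K3 surfaces is \cite{Ier21} itself. It is telling that the present paper, when it needs a K3 input in Theorem \ref{thm: etBrEnriques}, applies Ieronymou's theorem twist-by-twist rather than feeding K3 surfaces into Theorem \ref{thm: fBr}.

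The second gap is the passage from a finite set $S$ of places to all of $\Omega_k$. You correctly identify the mismatch, but your patch does not work as stated: outside $S$, Liang's construction gives no control whatsoever on the local positions of the $K$-rational point it produces, so nothing in the construction ``forces'' agreement in $\CH_0(Y_{k_v})/n$ at the remaining places, and compatibility of proper pushforward with $\CH_0$ is irrelevant there. What is actually needed (and what Ieronymou uses) is a structure theorem for Chow groups at good places: enlarge $S$ to contain also the archimedean places and all $v\mid n$; at every remaining place $Y$ has good reduction and $v\nmid n$, and then the degree map $\CH_0(Y_{k_v})/n\to\Z/n$ is an isomorphism --- by the Kerz--Esnault--Wittenberg restriction isomorphism together with Kato--Saito class field theory over the finite residue field and the simple connectedness of K3 surfaces --- so any two degree-$d$ cycles automatically agree there. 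This is a citable but substantive input that is absent from your proposal.
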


By using Ieronymou's result and exploiting the K3 coverings of Enriques surfaces, we are able to study the arithmetic behaviour of 0-cycles on Enriques surfaces.

\begin{thm} \label{thm: etBrEnriques} Let $X$ be an Enriques surface over a number field $k$ and let $f: Y \to X$ be a K3 covering of $X$, i.e. a $\Z/2\Z$-torsor over $X$ with $Y$ a K3 surface. Assume that Conjecture \ref{conj: Skoro} is true. Let $d \in \Z$. Then, for any positive integer $n$, if $ (z_v)_v \in Z_0^d(X_{\A_k})^{f, \Br}$ then there exists a global 0-cycle $z_{n} \in Z_0^d(X)$ such that $z_{n}$ and $(z_v)_v$ have the same image in $\CH_0(X_{k_v})/n$ for all $v \in \Omega_k$. 
\end{thm}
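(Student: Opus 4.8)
The plan is to reduce the statement directly to Ieronymou's 0-cycle result for K3 surfaces (Theorem \ref{thm: Ieronymou}) applied to the twisted coverings, rather than to invoke the general transfer result Theorem \ref{thm: fBr}. The latter would force us to verify its hypothesis (ii), a statement about \emph{rational} points on the Enriques surfaces $X_L$, whereas the building blocks of the descent--Brauer set are themselves K3 surfaces, to which the 0-cycle result applies as a black box.

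First I would unwind the hypothesis. By Definition \ref{def:fBrSet}, membership $(z_v)_v \in Z_0^d(X_{\A_k})^{f,\Br}$ means that there exist a finite non-empty set $S \subset \frakF_k$, non-empty finite sets $T_K \subset H^1(K,\Z/2\Z)$ for each $K \in S$, integers $(\Delta_\tau)_{\tau \in T_K}$ with $\sum_{K \in S}[K:k]\sum_{\tau \in T_K}\Delta_\tau = d$, and adelic 0-cycles $(\tilde{z}^{\tau}_{w})_{w \in \Omega_K} \in Z_0^{\Delta_\tau}(Y^\tau_{\A_K})^{\Br}$ for each pair $(K,\tau)$, such that $(z_v)_v = \textbf{f}_{\ast,\textrm{ad}}$ of this collection. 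The key observation is then that each twist $Y^\tau_K$ is again a K3 surface over the number field $K$: it is a $K$-form of $Y_K$, becoming isomorphic to $\Ybar$ over $\Kbar$, and being a K3 surface is a geometric condition. Since Conjecture \ref{conj: Skoro} is assumed over all number fields, it holds for $Y^\tau_K$, so Theorem \ref{thm: Ieronymou} applies with base field $K$ and degree $\Delta_\tau$.

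Applying Ieronymou's theorem to each $(K,\tau)$ produces a \emph{single} global 0-cycle $z^\tau_n \in Z_0^{\Delta_\tau}(Y^\tau_K)$ with the same image as $\tilde{z}^{\tau}_{w}$ in $\CH_0(Y^\tau_{K_w})/n$ simultaneously for all $w \in \Omega_K$. I would then set $z_n := \textbf{f}_\ast\big((z^\tau_n)_{\tau \in T_K,\, K \in S}\big)$; by Lemma \ref{lem: degrees} and the degree relation $\sum_{K}[K:k]\sum_{\tau}\Delta_\tau = d$, this is a global 0-cycle in $Z_0^d(X)$. Finally, fixing $v \in \Omega_k$ and using that $\res_v$ commutes with proper pushforward (so that $\res_v(z_n)$ is the $v$-component of $\textbf{f}_{\ast,\textrm{ad}}$ applied to the local restrictions of the $z^\tau_n$) together with the fact that proper pushforward induces maps on Chow groups compatible with reduction mod $n$, one computes
\[ [\res_v(z_n)] = \sum_{K,\tau}\sum_{w \mid v}(s_{K_w}\circ f^\tau_K)_\ast[\res_w(z^\tau_n)] \equiv \sum_{K,\tau}\sum_{w \mid v}(s_{K_w}\circ f^\tau_K)_\ast[\tilde{z}^{\tau}_{w}] \equiv [z_v] \pmod{n\,\CH_0(X_{k_v})}, \]
where the middle congruence is Ieronymou's approximation and the last equality is the definition of $\textbf{f}_{\ast,\textrm{ad}}$. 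This is exactly the chain of congruences appearing in the proof of Theorem \ref{thm: fBr}.

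The main subtlety, rather than a deep obstacle, is twofold: recognizing that the twists $Y^\tau_K$ remain K3 surfaces so that Ieronymou's theorem is directly applicable (which lets us sidestep Theorem \ref{thm: fBr} and its rational-point hypothesis entirely), and carrying out the bookkeeping that pushforward is compatible with restriction to completions and with reduction mod $n$ on Chow groups. The latter requires care only because of the nested indexing over $S$, over $T_K$, and over places $w \mid v$, but each step is routine; no new input beyond Theorem \ref{thm: Ieronymou} and the functoriality of proper pushforward on $\CH_0$ is needed.
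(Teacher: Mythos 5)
Your proposal is correct and follows essentially the same route as the paper: the paper likewise bypasses Theorem \ref{thm: fBr}, applies Theorem \ref{thm: Ieronymou} directly to each twist $Y^\tau_K$ (implicitly using, as you make explicit, that twists of a K3 surface remain K3 surfaces), pushes the resulting global 0-cycles forward via $\textbf{f}_\ast$, and verifies the mod-$n$ congruences in $\CH_0(X_{k_v})$ through the functoriality of proper pushforward on Chow groups.
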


\begin{proof} Fix a positive integer $n$. If $(z_v) \in Z_0^d(X_\A)^{f, \Br}$, then by definition there exist a non-empty finite set $S$ of field extensions of $k$, non-empty finite sets $T_K \subset H^1_{\et}(K,\Z/2\Z)$ for each $K \in S$, and a tuple $((\Delta_\tau)_{\tau \in T_K})_{K \in S}$ of integers satisfying
 $\sum_{K \in S} \sum_{\tau \in T_K} [K:k] \Delta_\tau = d$, and 0-cycles $(z^{\tau}_w)_{w \in \Omega_K} \in Z_0^{\Delta_\tau}(Y^\tau_{\A_K})^{\Br} $ for all $\tau \in T_K$ and all $K \in S$ such that
 \[\textbf{f}_{\ast, \textrm{ad}}\left(\left( \left((z^{\tau}_w)_{w \in \Omega_K}\right)_{\tau \in T_K}\right)_{K \in S}\right) = (z_v)_v.\]

 Under the assumption that Conjecture \ref{conj: Skoro} is true, from Theorem \ref{thm: Ieronymou} we deduce that  there exist global 0-cycles $z_\tau \in Z_0^{\Delta_\tau}(Y^\tau_K)$ such that $z_\tau$ and $(z^{\tau}_w)_{w \in \Omega_K}$ have the same image in $\CH_0(Y^\tau_{K_w})/n$ for all $w \in \Omega_K$, for all $\tau \in T_K$ and all $K \in S$.
 
 Let $z := \textbf{f}_\ast\left(\left( \left(z_{\tau}\right)_{\tau \in T_K}\right)_{K \in S}\right)$. By Lemma \ref{lem: degrees}, $z \in Z_0^d(X)$. Moreover, it is not hard to see that $z$ and $z_v$ must have the same image in $\CH_0(X_{k_v})/n$ for all $v \in \Omega_k$. Indeed, for all $w \in \Omega_K$, for all $\tau \in T_K$ and $K \in S'$,  working at the level of Chow groups we have 
 \[ [z_w^\tau] = [\res_{w}(z_\tau)] + n \lambda_w^\tau\]
 for some $\lambda_w^\tau \in \CH_0(Y^\tau_{K_w})$, where
 $\res_w(z_\tau)$ is the image of $z_\tau$ under the natural map $Z^{\Delta_\tau}_0(Y^\tau_K) \to Z^{\Delta_\tau}_0(Y^\tau_{K_w})$. Then, using the fact that proper pushforward maps at the level of sets of 0-cycles induce maps of Chow groups, for every $v \in \Omega_k$ we have
 \begin{talign*}
  [z_v] &= \sum_{K \in S} \sum_{\tau \in T_K} \sum_{w \in \Omega_K : w|v} (s_K \circ f_K^\tau)_\ast \left( [z_w^\tau]\right)\\
  & = \sum_{K \in S} \sum_{\tau \in T_K} \sum_{w \in \Omega_K : w|v} (s_K \circ f_K^\tau)_\ast \left( [\res_w(z_\tau)] + n \lambda_w^\tau\right)\\
  & = [\res_v(z)] + \sum_{K \in S} \sum_{\tau \in T_K} \sum_{w \in \Omega_K : w|v} (s_K \circ f_K^\tau)_\ast \left( n \lambda_w^\tau\right)\\
  & =  [\res_v(z)] + n \sum_{K \in S} \sum_{\tau \in T_K} \sum_{w \in \Omega_K : w|v} (s_K \circ f_K^\tau)_\ast \left( \lambda_w^\tau\right),
  \end{talign*}
 where $\res_v(z)$ is the image of $z$ under the natural map $Z^d_0(X) \to Z^d_0(X_v)$. Since 
 \[\sum_{K \in S} \sum_{\tau \in T_K} \sum_{w \in \Omega_K : w|v} (s_K \circ f_K^\tau)_\ast \left( \CH_0(Y^\tau_{K_w})\right) \subset \CH_0(X_{k_v}),\] we are done.
\end{proof}

\subsection{(Twisted) Kummer varieties as torsors}

Let $A$ be an abelian variety over a number field $k$ of dimension $\geq 2$. Let $\sigma := [T \to \Spec k] \in H^1_{\et}(k, A[2])$. Under the natural morphism $H^1_{\et}(k, A[2]) \to H^1_{\et}(k, A)$, we have that $\sigma$ gives rise to a 2-covering $\rho: V \to A$, where $V$ has the structure of a $k$-torsor under $A$ of period dividing 2. The involution $[-1] : A \to A$, fixing $A[2]$, induces an involution $\iota: V \to V$ fixing $T = \rho^{-1}(0_A)$. Let $\tilde{V} \to V$ be the blow-up of $V$ at $T$. Then the involution $\iota$ induces an involution $\tilde{\iota}: \tilde{V} \to \tilde{V}$ fixing the exceptional divisors of the blow-up. The quotient $ \tilde{V}/\tilde{\iota}$ is called the \emph{(twisted) Kummer variety associated to $A$ and $\sigma$}. In what follows, we sometimes omit the references to $A$ and $\sigma$ and just talk about \emph{(twisted) Kummer varieties}.

\begin{lemma}\label{pow2}
Let $Y$ be a (twisted) Kummer variety over a number field $k$. Then $Y$ has a 0-cycle of degree a power of 2.
\end{lemma}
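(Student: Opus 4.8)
The plan is to produce a $0$-cycle of degree a power of $2$ on the intermediate variety $V$, and then transport it up to the blow-up $\tilde{V}$ and down to the quotient $Y = \tilde{V}/\tilde{\iota}$, checking that neither operation destroys $2$-power degree. Note first that $V$, being a torsor under the abelian variety $A$, is smooth, proper and geometrically integral (over $\kbar$ it becomes isomorphic to $A$), and likewise $\tilde{V}$ and $Y$ are proper.

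First I would exhibit the cycle on $V$. Since $\rho \colon V \to A$ is a $2$-covering, i.e.\ a twist of the multiplication map $[2]\colon A \to A$, it is finite flat of degree $\deg[2] = 2^{2\dim A}$. Because $0_A \in A(k)$ is a rational point, the fibre $T = \rho^{-1}(0_A)$ is a $0$-dimensional closed subscheme of $V$ that is finite over $k$ of degree $2^{2\dim A}$; equivalently, the flat pullback $\rho^\ast[0_A] = [T]$ is a $0$-cycle on $V$ of degree $2^{2\dim A}$. In particular the index of $V$, namely the positive generator of the image of $\deg \colon \CH_0(V) \to \Z$, divides $2^{2\dim A}$ and is therefore a power of $2$.

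Next I would pass to $\tilde{V}$. The morphism $b \colon \tilde{V} \to V$ is the blow-up of the smooth $0$-dimensional centre $T$, so $b$ is birational between smooth proper geometrically integral varieties, and hence the two have the same index: concretely, by the blow-up formula for Chow groups the extra summands contributing to $\CH_0$ involve $\CH_{-i}(T) = 0$ for $i \geq 1$, so $b_\ast \colon \CH_0(\tilde{V}) \to \CH_0(V)$ is a degree-preserving isomorphism. Thus $\tilde{V}$ also carries a $0$-cycle $z$ of degree a power of $2$. Finally I would push forward along the quotient map $\pi \colon \tilde{V} \to Y$, which is finite surjective of degree $2$: proper pushforward of $0$-cycles preserves degree over $k$, since for a closed point $p$ one has $\deg_k \pi_\ast[p] = [k(p):k(\pi(p))]\,[k(\pi(p)):k] = [k(p):k] = \deg_k[p]$. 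Hence $\pi_\ast z$ is a $0$-cycle on $Y$ of the same power-of-$2$ degree, as required.

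The only non-formal input is the identity $\deg T = 2^{2\dim A}$, which is immediate from the degree of $[2]$; the rest is bookkeeping about how the index behaves under a blow-up and under a finite quotient. I expect the mildest obstacle to be justifying that the index is unchanged when moving from $V$ to $\tilde{V}$, which the blow-up formula for $\CH_0$ settles cleanly (alternatively one invokes birational invariance of the index for smooth proper varieties). I remark that this argument uses nothing about $\sigma$ beyond the existence of the covering $\rho$, and in fact works for every $\dim A \geq 1$.
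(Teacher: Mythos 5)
Your proof is correct, and it shares the paper's skeleton --- produce a $0$-cycle of $2$-power degree on the torsor $V$, transfer it across the blow-up $b \colon \tilde{V} \to V$, and push it down the degree-$2$ quotient $\tilde{V} \to Y$ --- but the key arithmetic input differs. The paper obtains the cycle on $V$ abstractly: it cites the period-index divisibility $P(V) \mid I(V) \mid P(V)^{2g}$ for torsors under abelian varieties, together with birational invariance of the index over number fields, to conclude that the smooth proper double cover of $Y$ has $2$-power index. You construct the cycle explicitly: the scheme-theoretic fibre $T = \rho^{-1}(0_A)$ of the $2$-covering (which is precisely the blow-up centre in the definition of $Y$) is a torsor under $A[2]$, hence a finite $k$-scheme of degree $2^{2\dim A}$. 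This is exactly the standard proof of the divisibility $I(V) \mid P(V)^{2g}$ specialized to period $2$, so in effect you have inlined the black box that the paper invokes. What your route buys: it is self-contained, and it yields the explicit bound $2^{2\dim A}$; what the paper's buys: brevity, and the fact that it needs only that $Y$ is doubly covered by a smooth proper variety birational to a torsor of period dividing $2$, with no reference to the $2$-covering structure. Your treatment of the blow-up step via the $\CH_0$ blow-up formula (the extra summands are negative-dimensional Chow groups of $T$, hence vanish) is valid, and you correctly note birational invariance of the index as an alternative, which is what the paper uses; even more simply, each closed point $t \in T$ lifts to a $k(t)$-rational point of its exceptional fibre, a projective space over $k(t)$, so $[T]$ lifts by hand to a $0$-cycle of the same degree on $\tilde{V}$.
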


\begin{proof} Since  $Y$ is a (twisted) Kummer variety, it admits a double cover by a smooth, proper variety $Z$ which is birational to a torsor $V$ under an abelian variety of dimension $g$ of period $P(V)$ dividing $2$. If  $I(V)$ denotes the index of $V$, that is, the gcd of the degrees of all closed points on $V$, then the following divisibility relation between the period and index is well-known:
\[ P(V) \ | \ I(V) \  | \ P(V)^{2g}.\]
In particular, $I(V)$ must be a power of 2. Since, for $k$ a number field, the index is a birational invariant of smooth varieties, it follows that $I(V) = I(Z)$. Hence, by definition of the index, $Z$ has a 0-cycle of degree $I(Z)$, a power of 2. By pushing forward this 0-cycle from $Z$ to $Y$, we obtain a 0-cycle on $Y$ of degree a power of 2, as required.
\end{proof}

Given that there is a close relationship between (twisted) Kummer varieties and $k$-torsors under abelian varieties, and since, conditionally on the finiteness of the relevant Tate-Shafarevich group, the (algebraic) Brauer-Manin obstruction is the only one for the existence of rational points on $k$-torsors under abelian varieties (see e.g., \cites{Man71,Cre20}), it is natural to ask the following question (and to possibly expect a positive answer).

\begin{question} \label{q:kum}
Let $X$ be a (twisted) Kummer variety over a number field $k$. Is it true that, for any finite extension $L/k$ of odd degree, $X(\A_L)^{\Br} \neq \emptyset$ implies $X(L) \neq \emptyset$?
\end{question}

For some evidence towards a positive answer to Question \ref{q:kum}, see for example \cite{SSD05}, \cite{HS16}, and \cite{Har19}.

\begin{thm}\label{thm: Kummer} Let $X$ be a smooth, proper, geometrically integral variety over a number field $k$. Let $f: Y \to X$ be a torsor under some linear algebraic group $F$ over $k$, where $Y$ is a (twisted) Kummer variety over $k$. Let $d \in \Z$ be \emph{odd}. Assume that Question \ref{q:kum} has a positive answer. Then
$Z_0^d(X_{\A_k})^{f,\Br\{2\}} \neq \emptyset$ implies $Z_0^{d}(X) \neq \emptyset$.
\end{thm}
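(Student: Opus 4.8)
The plan is to use the oddness of $d$ to isolate a single twisted Kummer factor, run Liang's construction on it to exchange the odd-degree $0$-cycle for a rational point over an odd-degree extension, and then assemble a degree-$d$ cycle on $X$ out of that point together with the power-of-$2$ cycle supplied by Lemma~\ref{pow2}; since we only want existence (the Hasse-principle direction), no congruence-mod-$n$ refinement is needed, which simplifies the bookkeeping of Theorem~\ref{thm: fBr} considerably. First I would unwind $Z_0^d(X_{\A_k})^{f,\Br\{2\}}\neq\emptyset$ exactly as in the proofs of Theorem~\ref{thm: fBr} and Proposition~\ref{rel}: there are a finite non-empty $S\subset\frakF_k$, non-empty finite sets $T_K\subset H^1(K,F)$, integers $(\Delta_\tau)$ with $\sum_{K\in S}[K:k]\sum_{\tau\in T_K}\Delta_\tau=d$, and adelic $0$-cycles $(z^\tau_w)_w\in Z_0^{\Delta_\tau}(Y^\tau_{\A_K})^{\Br\{2\}}$. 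Because $d$ is odd, the integer $\sum_{K,\tau}[K:k]\Delta_\tau$ is odd, so there exist $K_0\in S$ and $\tau_0\in T_{K_0}$ with both $[K_0:k]$ and $\Delta_{\tau_0}$ odd. Write $V:=Y^{\tau_0}_{K_0}$, which carries the odd-degree adelic $0$-cycle $(z^{\tau_0}_w)_w$ in its $2$-primary Brauer--Manin set. A preliminary point to justify is that $V$ is again a (twisted) Kummer variety over $K_0$: geometrically $\overline V\cong\overline Y$ is a Kummer variety, and forming the twist $Y^{\tau_0}$ of the $F$-torsor and base-changing to $K_0$ keeps us within the class of $k$-forms produced by the Kummer construction, so that Question~\ref{q:kum} is applicable to $V$.

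The technical core is then to convert the $0$-cycle into a rational point. Starting from $(z^{\tau_0}_w)_w\in Z_0^{\Delta_{\tau_0}}(V_{\A_{K_0}})^{\Br\{2\}}$, I would run the argument of \cite{Lia13}*{Theorem 3.2.1}, adapted as in Theorem~\ref{thm: fBr} and Proposition~\ref{rel}, to produce a finite extension $L/K_0$ with $[L:K_0]\equiv\Delta_{\tau_0}\pmod 2$ (hence of odd degree) together with an adelic point $(y'_w)_w\in V(\A_L)^{\Br\{2\}}$. This uses the finiteness of $\Br(V)\{2\}/\Br_0(V)$, available for K3 and Kummer varieties, to handle the finitely many Brauer evaluations over an integral model of $V$ and to transport the Brauer conditions across $L/K_0$. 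Applying the assumed positive answer to Question~\ref{q:kum} to the twisted Kummer variety $V$ over the odd-degree extension $L/K_0$ then yields $V(L)\neq\emptyset$, i.e. a closed point of $V$ of odd degree over $k$.

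Finally I would push this closed point forward along $V=Y^{\tau_0}_{K_0}\xrightarrow{f^{\tau_0}}X_{K_0}\xrightarrow{s}X$; since proper pushforward preserves degrees, this gives an effective $0$-cycle on $X$ of some odd degree $e$. Independently, Lemma~\ref{pow2} furnishes a $0$-cycle on $Y$ of degree a power of $2$, and its pushforward along $f$ is a $0$-cycle on $X$ of degree $2^m$. As $e$ is odd, $\gcd(e,2^m)=1$, so $e\Z+2^m\Z=\Z$; picking $a,b$ with $ae+b2^m=1$, the combination $ad\cdot(\text{the degree-}e\text{ cycle})+bd\cdot(\text{the degree-}2^m\text{ cycle})$ has degree exactly $d$, giving $Z_0^d(X)\neq\emptyset$.

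The step I expect to be the main obstacle is the Brauer bookkeeping in the middle paragraph: ensuring that the adelic point produced by Liang's method over $L$ lies in a Brauer--Manin set to which Question~\ref{q:kum} genuinely applies, while only the $2$-primary part of the Brauer group is controlled by the hypothesis. Reconciling the $\Br\{2\}$ appearing in $Z_0^d(X_{\A_k})^{f,\Br\{2\}}$ with the full $\Br$ in Question~\ref{q:kum} is exactly the delicate point, and---as recorded in the remark following the theorem---it is what the later work of Ieronymou \cite{Ier22} removes, allowing one to drop both the $2$-primary restriction and the hypothesis that $d$ be odd.
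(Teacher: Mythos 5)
Your overall strategy matches the paper's: use the oddness of $d$ to isolate $K\in S$ and $\tau\in T_K$ with $[K:k]\Delta_\tau$ odd, run Liang's construction on the twisted Kummer variety $Y^\tau_K$ to produce an odd-degree extension and an adelic point in a $2$-primary Brauer--Manin set, invoke Question \ref{q:kum} to obtain a rational point, and assemble a degree-$d$ cycle using the power-of-$2$ cycles from Lemma \ref{pow2}. Your endgame is in fact a clean simplification of the paper's: pushing the odd-degree point forward to $X$ and taking a B\'ezout combination with the pushforward of a power-of-$2$ cycle on $Y$ itself avoids the paper's case distinction on $\#S$ and the accompanying gcd bookkeeping across the other fields $K'\in S$; that part of your argument is correct.

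However, the step you flag as ``the main obstacle'' is a genuine gap in your write-up, and it is not a gap that needs Ieronymou's later preprint: the paper closes it inside this very proof, with two specific inputs. As you correctly diagnose, Liang's method only yields an adelic point in $Y^\tau_{K_\tau}(\A_{K_\tau})^{\Br(Y^\tau_K)\{2\}}$ --- the $2$-primary part, and with Brauer classes pulled back from the base field $K$ rather than defined over $K_\tau$ --- whereas Question \ref{q:kum} requires non-emptiness of the full Brauer--Manin set $Y^\tau_{K_\tau}(\A_{K_\tau})^{\Br}$. The paper bridges this in two steps: first, since $[K_\tau:K]$ is odd, \cite{BN21}*{Lemma 7.1} gives the equality $Y^\tau_{K_\tau}(\A_{K_\tau})^{\Br(Y^\tau_K)\{2\}} = Y^\tau_{K_\tau}(\A_{K_\tau})^{\Br(Y^\tau_{K_\tau})\{2\}}$, replacing classes from the smaller base field by classes defined over $K_\tau$; second, since $Y^\tau_{K_\tau}$ is a (twisted) Kummer variety, \cite{CV18}*{Theorem 5.10} (Creutz--Viray: for Kummer varieties the $2$-primary Brauer--Manin obstruction to the Hasse principle is the only Brauer--Manin obstruction) upgrades non-emptiness of the $\Br\{2\}$-set to non-emptiness of the full $\Br$-set. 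Only after these two reductions can Question \ref{q:kum} be applied. Your attribution of this reconciliation to \cite{Ier22} misreads the remark following the theorem: that preprint removes the hypotheses of the statement (the oddness of $d$ and the restriction to $\Br\{2\}$), but the $\Br\{2\}$-to-$\Br$ passage within the proof is handled by Creutz--Viray; without it, your appeal to Question \ref{q:kum} is unjustified and the argument does not go through.
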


\begin{proof}
Let $(z_v)_v\in Z_0^d(X_{\A_k})^{f,\Br\{2\}}$. Then, by definition, 
there exist a non-empty finite set $S$ of field extensions of $k$, non-empty finite sets $T_K \subset H^1_{\et}(K,F)$ for each $K \in S$, and a tuple $((\Delta_\tau)_{\tau \in T_K})_{K \in S}$ of integers satisfying
 $\sum_{K \in S} \sum_{\tau \in T_K} [K:k] \Delta_\tau = d$, and 0-cycles $(z^{\tau}_w)_{w \in \Omega_K} \in Z_0^{\Delta_\tau}(Y^\tau_{\A_K})^{\Br\{2\}} $ for all $\tau \in T_K$ and all $K \in S$ such that
 \[\textbf{f}_{\ast, \textrm{ad}}\left(\left( \left((z^{\tau}_w)_{w \in \Omega_K}\right)_{\tau \in T_K}\right)_{K \in S}\right) = (z_v)_v.\]
 Since $d$ is odd, it follows that there exists some $K \in S$ and some $\tau \in T_K$ such that $\gcd([K:k] \Delta_\tau, 2) = 1$. By the proof of \cite{Lia13}*{Theorem 3.2.1}, we can construct a finite extension $K_\tau /K$ with $[K_\tau: K] \equiv \Delta_\tau \ (\bmod \ 2)$ such that $ Y^\tau_{K_\tau}(\A_{K_\tau})^{\Br(Y^\tau_K)\{2\}}\neq \emptyset$. But, by construction, $[K_\tau: K]$ is odd. Hence, by  \cite{BN21}*{Lemma 7.1} we know that actually $Y^\tau_{K_\tau}(\A_{K_\tau})^{\Br(Y^\tau_K)\{2\}} = Y^\tau_{K_\tau}(\A_{K_\tau})^{\Br(Y^\tau_{K_\tau})\{2\}}$.
 Since $Y^\tau_{K_\tau}$ is a (twisted) Kummer variety,  \cite{CV18}*{Theorem 5.10} then yields that $Y^\tau_{K_\tau}(\A_{K_\tau})^{\Br(Y^\tau_{K_\tau}) }\neq \emptyset$. By assumption, this implies that there exists some rational point $y \in Y^\tau_K(K_\tau)$, which can be viewed as a closed point of degree $[K_\tau:K]$ on $Y^\tau_K$. Since $Y^\tau_{K}$ is a (twisted) Kummer variety, we know by Lemma \ref{pow2} that $Y^\tau_{K}$ has a global 0-cycle of degree a power of 2. Hence, by Bézout's theorem, there exists a global 0-cycle $\tilde{z}_\tau \in Z_0^1(Y^\tau_K)$ and thus a global 0-cycle $z \in Z_0^{[K:k]}(X)$. If $\# S = 1$, then $[K : k] | d$ and we are done. 
 If $\# S > 1$, then, since $\sum_{K' \in S} \sum_{\tau' \in T_{K'}} [K':k] \Delta_{\tau'} = d$, 
 it follows that
 \[\gcd\left([K:k], [K':k] \textrm{ for $K' \in S - \{K\}$}\right) \  | \ d.\] 
 In particular, since $[K:k]$ is odd, it follows that 
 \begin{equation} \label{eq:gcd}\gcd\left([K:k], 2^{t_{K'}}[K':k] \textrm{ for $K' \in S - \{K\}$}\right) \ | \ d\end{equation}
 for any integers $t_{K'} \geq 0$. For any $K' \in S - \{K\}$, fix some $\tau' \in T_{K'}$. Then, by considering the (twisted) Kummer variety $Y^{\tau'}_{K'}$ and, by Lemma \ref{pow2}, a global 0-cycle  $y'$ of degree a power of 2 on $Y^{\tau'}_{K'}$, we get that the pushforward $(f^{\tau'}_{k'})_\ast(y')$ is a global 0-cycle of degree a power of 2 on $X_{K'}$, and thus a global 0-cycle $x'$ of degree $2^{t_{K'}} [K':k]$ on $X$, for some $t_{K'} \geq 0$. Hence, by \eqref{eq:gcd}, we can take an appropriate linear combination of the 0-cycles $x'$ (for each $K' \in S-\{K\}$) and $z$ we obtain a 0-cycle of degree $d$ on $X$, as required.
 \end{proof}

\begin{remark} By using \cite{BN21}, one can also consider the more general case of torsors under arbitrary finite products of (twisted) Kummer varieties, K3 surfaces, and geometrically rationally connected varieties over some number field.
\end{remark}

\begin{remark}
As already mentioned in the introduction, the recent preprint \cite{Ier22} by Ieronymou should remove some of the conditions in Theorem \ref{thm: Kummer}, namely we can get a statement for any $d \in \Z$ and for the full Brauer groups, with a proof similar to that of Theorem \ref{thm: etBrEnriques}. We have preferred to leave the statement of Theorem \ref{thm: Kummer} in its current form as its proof could potentially be used in other contexts where one has only limited information about the arithmetic of rational points (e.g. when one only knows local-to-global principles for rational points with $2$-primary Brauer groups).
\end{remark}

\subsection{Universal torsors and torsors under tori}
Recall that if $X$ is a variety over $k$ and $g: Y \to X$ is a $G$-torsor over $X$ for some linear algebraic group $G$ over $k$ of multiplicative type, then the \emph{type of the torsor $g: Y \to X$} is  the map
\[ \lambda : \widehat{G} \to \Pic \Xbar\]
which associates to any character $\chi \in \widehat{G} = \widehat{G}(\kbar)$ the class of the pushforward $\chi_\ast(\Ybar) \to \Xbar$ in $H^1(\Xbar, \G_m) = \Pic \Xbar$   (see \cite{Sko01}*{Lemma 2.3.1}), 
 where $\widehat{G} := \Hom_{\textrm{$k$-groups}}(G, \G_m) = \widehat{G}(\kbar)$ is the module of characters of $G$. If, moreover, $\Pic \Xbar$ is finitely generated as a $\Z$-module, then we say that $g: Y \to X$ is a \emph{universal torsor} if the type map $\lambda :  \widehat{G} \to \Pic \Xbar$ is an isomorphism of $\Gal(\kbar/k)$-modules. One useful feature of universal torsors is that they are really defined geometrically, implying that if $g: W \to X$ is a universal torsor for $X$ under some linear algebraic group $G$ over $k$, then $g_K : W_K \to X_K$ is also a universal torsor for $X_K$ (under $G_K$) for any finite extension $K/k$.  Universal torsors satisfy many nice properties, in the context of rational points. For example, we have the following.

\begin{thm}[\cite{Sko01}*{Theorem 6.1.2}]\label{univtpt} Let $X$ be a variety over $k$ with $\kbar[X]^\times = \kbar^\times$ and $\Pic \Xbar$ finitely generated as a $\Z$-module. Assume that $g: W \to X$ is a universal torsor for $X$. Then $X(\A_k)^g = X(\A_k)^{\Br_1}$. 
\end{thm}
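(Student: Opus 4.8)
The plan is to prove the two inclusions separately, translating each side into the Galois cohomology of $G$ and then comparing via global duality. Write $M := \widehat{G}$ for the (finitely generated) character module of $G$; since $g$ is universal, the type $\lambda\colon M \to \Pic\Xbar$ is an isomorphism of $\Gal(\kbar/k)$-modules, and the same is true for every twist $g^\tau$, so each $W^\tau$ is again a universal torsor for $X$. I would first set up two dictionaries. The hypotheses $\kbar[X]^\times = \kbar^\times$ and $\Pic\Xbar$ finitely generated give, via the Hochschild--Serre spectral sequence for $\G_m$ (and $H^3(k,\G_m)=0$), a canonical isomorphism $\Br_1(X)/\Br_0(X) \isom H^1(k,\Pic\Xbar)$, which $\lambda$ identifies with $H^1(k,M)$. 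Second, for $P_v \in X(k_v)$ the fibre $W_{P_v}$ is a $G$-torsor over $k_v$, giving an evaluation map $\theta\colon X(\A_k) \to \prod_v' H^1(k_v,G)$, $(P_v)_v \mapsto ([W_{P_v}])_v$; a spreading-out argument over an integral model of $g$ shows $\theta$ really lands in the restricted product.

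For the inclusion $X(\A_k)^g \subseteq X(\A_k)^{\Br_1}$ I would argue directly. Fix $\tau$ and a lift $(Q_v)_v \in W^\tau(\A_k)$ of $(P_v)_v$. Because $W^\tau$ is universal, one has $\Pic\overline{W^\tau} = 0$ and $\kbar[W^\tau]^\times = \kbar^\times$, whence $\Br_1(W^\tau) = \Br_0(W^\tau)$. Thus for $b \in \Br_1(X)$ the pullback $(g^\tau)^\ast b$ lies in $\Br_1(W^\tau) = \Br_0(W^\tau)$, i.e.\ is the image of a constant $\beta \in \Br(k)$. Functoriality of evaluation and the global reciprocity law then give $\sum_v \inv_v(b(P_v)) = \sum_v \inv_v((g^\tau)^\ast b(Q_v)) = \sum_v \inv_v(\res_v\beta) = 0$, so $(P_v)_v \in X(\A_k)^{\Br_1}$.

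For the reverse inclusion I would route the translation through global duality. Using the twisting formalism for the commutative group $G$, one has $(P_v)_v \in X(\A_k)^g$ if and only if $\theta((P_v)_v) = ([W_{P_v}])_v$ lies in the image of the localisation map $H^1(k,G) \to \prod_v' H^1(k_v,G)$. By the Poitou--Tate sequence for the group of multiplicative type $G$, this image is exactly the orthogonal complement, under the sum of local Tate pairings $\langle\,,\rangle_v\colon H^1(k_v,G)\times H^1(k_v,M) \to \Q/\Z$, of the image of $H^1(k,M)$ in $\prod_v H^1(k_v,M)$. On the other side, $(P_v)_v \in X(\A_k)^{\Br_1}$ means $\sum_v \inv_v(b(P_v)) = 0$ for all $b \in \Br_1(X)$, equivalently (constants pairing trivially) for all classes in $\Br_1(X)/\Br_0(X) = H^1(k,M)$. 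The crux is the compatibility
\[ \sum_v \inv_v(b(P_v)) = \sum_v \langle [W_{P_v}], \res_v a\rangle_v, \]
valid for $b \in \Br_1(X)$ corresponding to $a \in H^1(k,M)$ under the isomorphism above. Granting this, the two conditions agree for every $a \in H^1(k,M)$ (all $a$ occur, as $\lambda$ is an isomorphism), giving $X(\A_k)^{\Br_1} = \theta^{-1}(\im H^1(k,G)) = X(\A_k)^g$.

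I expect the main obstacle to be the compatibility formula, which is the technical heart of descent theory: one must match the image of $b$ under the Hochschild--Serre differential with the cup-product pairing determined by the type of $g$, and check that the local evaluation of the Brauer class at $P_v$ coincides with the local duality pairing of $[W_{P_v}]$ against $\res_v a$. This is a diagram chase through the spectral sequence and the cup-product functorialities, combined with the compatibility of the local class field theory invariants; I would follow Colliot-Th\'el\`ene--Sansuc, proving the identity one place $v$ at a time and then summing. A secondary, more routine point is the spreading-out that guarantees $\theta$ lands in the restricted product and that almost all local pairings vanish.
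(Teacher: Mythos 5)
Your proposal is correct, and it is essentially the same argument as the paper's source for this statement: the paper gives no proof of its own but cites Skorobogatov (Theorem 6.1.2), whose proof is exactly your combination of (a) the easy inclusion via $\Br_1(W^\tau)=\Br_0(W^\tau)$ for twists of a universal torsor plus global reciprocity, and (b) the Colliot-Th\'el\`ene--Sansuc compatibility formula identifying $\sum_v \inv_v b(P_v)$ with the sum of local duality pairings of $[W_{P_v}]$ against the class in $H^1(k,\widehat{G})$, combined with the Poitou--Tate description of the image of $H^1(k,G)$ in the restricted product. The two technical points you flag (the compatibility formula and the spreading-out/restricted-product issue for non-proper $X$) are indeed where the real work lies, and they are handled in the cited reference exactly as you outline.
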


In this section, we leverage some of our knowledge of universal torsors in the context of rational points to deduce some information in the context of 0-cycles.

\begin{thm} \label{univt}  
Let $X$ be a smooth, proper, geometrically integral variety over $k$ with $\Pic \Xbar$ finitely generated as a $\Z$-module. Suppose that a universal torsor $g: W \to X$ under some group $G$ of multiplicative type over $k$ exists. Then, for any integer $d \in \Z$, for any positive integer $n$, and for any finite subset $S' \subset \Omega_k$ of places of $k$, we have that
\begin{enumerate}
\item if $(z_v)_v \in \Z_0^d(X_{\A_k})^g $, then there exists some $(u_v)_v \in Z_0^d(X_{\A_k})^{\Br_1}$ such that $z_v$ and $u_v$ have the same image in $\CH_0(X_{k_v})/n$ for all $v \in S'$;
\item if, moreover, $\Br_1(X)/\Br_0(X)$ is finite, then $(z_v)_v \in \Z_0^d(X_{\A_k})^{\Br_1}$ implies that there exists some  $(u_v)_v \in \Z_0^d(X_{\A_k})^{g}$ such that $z_v$ and $u_v$ have the same image in $\CH_0(X_{k_v})/n$ for all $v \in S'$.
\end{enumerate}
\end{thm}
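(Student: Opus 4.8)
The plan is to prove the two assertions separately: (1) is an essentially formal Brauer--Manin computation, while (2) reduces, via Liang's method, to the rational-point equality of Theorem \ref{univtpt}.

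For (1), I would first record the geometric input: since a universal torsor is defined geometrically, each twist $g_K^\tau\colon W_K^\tau\to X_K$ is again a universal torsor for $X_K$ (the type is unchanged by twisting, as $G$ is of multiplicative type and the torsors of a fixed type form a principal homogeneous space under $H^1(K,G)$). Hence $\Pic\overline{W_K^\tau}=0$, and combined with $\kbar[W_K^\tau]^\times=\kbar^\times$ (which holds since $X$ is proper, so $\kbar[X]^\times=\kbar^\times$), the Hochschild--Serre sequence forces $\Br_1(W_K^\tau)=\Br_0(W_K^\tau)$; in particular $(g_K^\tau)^\ast\Br_1(X_K)\subseteq\Br_0(W_K^\tau)$. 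I would then prove the stronger containment $Z_0^d(X_{\A_k})^g\subseteq Z_0^d(X_{\A_k})^{\Br_1}$, which gives (1) at once. Fixing $\gamma\in\Br_1 X$ and an element $\textbf{g}_{\ast,\textrm{ad}}(\tilde y)=(z_v)_v$, I would run the pairing computation of Proposition \ref{prop:etBrSubsetBr}; the only new point is that for each $K\in S$, $\tau\in T_K$ the class $(g_K^\tau)^\ast(\gamma|_{X_K})$ is a constant $c_{K,\tau}\in\Br K$, so its evaluation requires no orthogonality hypothesis on $\tilde y$. The $(K,\tau)$-contribution to $\langle(z_v)_v,\gamma\rangle_{BM}$ collapses to $\Delta_\tau\sum_{w\in\Omega_K}\inv_w\big((c_{K,\tau})_w\big)$, which vanishes by the reciprocity law for $c_{K,\tau}\in\Br K$; summing over $K,\tau$ yields $(z_v)_v\in Z_0^d(X_{\A_k})^{\Br_1}$.

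For (2), the idea is to transport the rational-point equality $X_K(\A_K)^{g_K}=X_K(\A_K)^{\Br_1}$ (Theorem \ref{univtpt}, valid over every finite $K/k$ because $g$ is geometric) across the $0$-cycle/rational-point bridge, using the construction in the proof of \cite{Lia13}*{Theorem 3.2.1} exactly as in the proof of Theorem \ref{thm: fBr}. Fix $n$, $S'$, and a closed point $\tilde x$. Starting from $(z_v)_v\in Z_0^d(X_{\A_k})^{\Br_1}$ and using finiteness of $\Br_1 X/\Br_0 X$ to fix representatives and a good integral model, I would apply Liang's construction to obtain a finite extension $K/k$ with $[K:k]\equiv d \pmod{n[k(\tilde x):k]}$ and an adelic point $\mathbf P=(P_w)_w\in X_K(\A_K)$ orthogonal to $\Br_1(X_K)$ whose pushforward $\sum_{w\mid v}(s_{K_w})_\ast(P_w)$ has the same class as $z_v$ in $\CH_0(X_{k_v})/n$ for all $v\in S'$. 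Theorem \ref{univtpt} over $K$ then produces a global twist $\tau\in H^1(K,G)$ and an adelic point $\mathbf Q\in W_K^\tau(\A_K)$ with $g_K^\tau(\mathbf Q)=\mathbf P$.

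It remains to package this as a descent datum and correct the degree. Viewing $\mathbf Q$ as an effective degree-$1$ adelic $0$-cycle on $W_K^\tau$ contributes degree $[K:k]$; to reach $d$ I would adjoin $k(\tilde x)$ to $S$ together with a global twist $\tau_0\in H^1(k(\tilde x),G)$ into which the $k(\tilde x)$-point $\tilde x$ lifts (rational-point partition over $k(\tilde x)$), weighted by $\Delta_{\tau_0}=\lambda n$ with $\lambda\in\Z$ chosen so that $[K:k]+\lambda n[k(\tilde x):k]=d$; such $\lambda$ exists by the congruence. Lemma \ref{lem:addegrees} then guarantees that the image $(u_v)_v:=\textbf{g}_{\ast,\textrm{ad}}(\cdots)$ has degree $d$ at every place, so $(u_v)_v\in Z_0^d(X_{\A_k})^g$; and since the $(k(\tilde x),\tau_0)$-contribution is $n$-divisible it dies in $\CH_0(X_{k_v})/n$, giving $u_v\equiv\sum_{w\mid v}(s_{K_w})_\ast(P_w)\equiv z_v$ there for all $v\in S'$. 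The main obstacle is producing $\mathbf P$ orthogonal to $\Br_1(X_K)$: unlike Theorem \ref{thm: fBr}, no surjectivity of $\res_{K/k}$ on Brauer quotients is assumed, and $\Br_1(X_K)/\Br_0(X_K)$ can be strictly larger than the image of $\Br_1(X)/\Br_0(X)$. I expect this orthogonality to be arranged \emph{inside} Liang's construction, through the choice of $K$ and of the approximating point together with reciprocity and the finiteness of $\Br_1/\Br_0$, rather than inherited from $k$; checking that his argument delivers full $\Br_1(X_K)$-orthogonality in this generality, and that the places $v\notin S'$ impose no incompatibility, is the delicate point.
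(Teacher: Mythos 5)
Your part (1) is correct and proves something strictly stronger than the paper does, by a genuinely different route. The paper never establishes the containment $Z_0^d(X_{\A_k})^{g} \subseteq Z_0^d(X_{\A_k})^{\Br_1}$; it only proves the non-emptiness transfer, by applying Liang's construction to each (non-proper) twist $W^\tau_K$ to produce extensions $L_\tau/K$ and adelic points, pushing these down to $X$ and invoking Theorem \ref{univtpt} over $L_\tau$ and over the residue field of an auxiliary closed point of $W^\tau_K$; the element of $Z_0^d(X_{\A_k})^{\Br_1}$ it exhibits is thus a newly built adelic $0$-cycle, not $(z_v)_v$ itself. Your argument instead verifies orthogonality of $(z_v)_v$ directly, and its ingredients all check out: twists of a universal torsor are universal torsors (the clean justification is that for commutative $G$ the type map is a homomorphism on $H^1(X_K,G)$ killing the constant classes from $H^1(K,G)$); the Colliot-Th\'el\`ene--Sansuc fundamental exact sequence for a torsor whose type is an isomorphism gives both $\Pic \overline{W^\tau_K}=0$ and $\kbar[W^\tau_K]^\times=\kbar^\times$ (note the latter needs injectivity of the type, with $\kbar[X]^\times=\kbar^\times$ as input --- properness of $X$ alone is not the reason); Hochschild--Serre then yields $\Br_1(W^\tau_K)=\Br_0(W^\tau_K)$; and the corestriction computation of Proposition \ref{prop:etBrSubsetBr} collapses the $(K,\tau)$-contribution to $\Delta_\tau\sum_{w}\inv_w(c_{K,\tau})=0$ by reciprocity, using that every local component of an element of $Z_0^{\Delta_\tau}(W^{\tau}_{\A_K})$ has degree exactly $\Delta_\tau$. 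This is the exact $0$-cycle analogue of the classical proof that $X(\A_k)^g\subseteq X(\A_k)^{\Br_1}$, requires no finiteness hypothesis, and avoids running Liang's machinery on the non-proper varieties $W^\tau_K$ altogether.

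Your part (2) follows the same strategy as the paper (Liang's construction applied to $(z_v)_v$ over $k$; Theorem \ref{univtpt} over the resulting field; an auxiliary closed point $\tilde x$ lifted into a twist over $k(\tilde x)$ to fix the degree, the correction term dying in $\CH_0(X_{k_v})/n$; packaging with $S=\{K,k(\tilde x)\}$, $\Delta_\tau=1$, $\Delta_{\tau_0}=\lambda n$). But the step you flag and leave open --- upgrading orthogonality from the image of $\Br_1(X)$ to all of $\Br_1(X_K)$ --- is a genuine gap, and it is precisely where the standing hypothesis that $\Pic\Xbar$ is finitely generated must enter. The paper closes it as follows: finiteness of $\Br_1(X)/\Br_0(X)\cong H^1(k,\Pic\Xbar)$ forces $\Pic\Xbar$ to be torsion-free; choosing a finite Galois extension $F/k$ trivializing the Galois action on $\Pic\Xbar$, one gets (as in Liang's Proposition 3.1.1, via inflation-restriction and the vanishing of continuous homomorphisms from $\Gal(\kbar/F)$ into a torsion-free finitely generated module) that $\res_{l/k}\colon \Br_1(X)/\Br_0(X)\to\Br_1(X_l)/\Br_0(X_l)$ is an isomorphism for every finite $l/k$ linearly disjoint from $F$. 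Liang's construction allows one to choose the field $K$ linearly disjoint from $F$ (and from $k(\tilde x)$), so orthogonality to the restricted representatives already gives orthogonality to $\Br_1(X_K)$ modulo $\Br_0(X_K)$, and constant classes pair to zero with any adelic point by reciprocity. In other words, the needed surjectivity is neither an extra assumption nor something hidden inside Liang's construction: it is a consequence of the hypotheses of the theorem, and once this lemma is inserted your proof of (2) coincides with the paper's.
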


\begin{proof} (1)
Let  $(z_v)_v \in \Z_0^d(X_{\A_k})^g$. Then, by definition, 
there exist a non-empty finite set $S$ of field extensions of $k$, non-empty finite sets $T_K \subset H^1_{\et}(K,G)$ for each $K \in S$, and a tuple $((\Delta_\tau)_{\tau \in T_K})_{K \in S}$ of integers satisfying
 $\sum_{K \in S} \sum_{\tau \in T_K} [K:k] \Delta_\tau = d$, and 0-cycles $(z^{\tau}_{v'})_{v' \in \Omega_K} \in Z_0^{\Delta_\tau}(W^\tau_{\A_K}) $ for all $\tau \in T_K$ and all $K \in S$ such that
 \[\textbf{g}_{\ast, \textrm{ad}}\left(\left( \left((z^{\tau}_{v'})_{v' \in \Omega_K}\right)_{\tau \in T_K}\right)_{K \in S}\right) = (z_v)_v.\]

 For each $\tau \in T_K$ (for each $K \in S$), we can fix a closed point $y_{\tau} \in W^{\tau}_{K}$, with $F_{\tau}:= K(y_{\tau})$ and degree say $\delta_{y_{\tau}} := [F_{\tau}: K]$. Then, arguing as in \cite{Lia13}*{Theorem 3.2.1}, we  can construct an extension $L_{\tau} /K$ of degree 
\[[L_{\tau} : K] \equiv \Delta_{\tau} \ (\bmod \  n \delta_{y_{\tau}} ),\]
say $[L_{\tau} : K] = \Delta_{\tau} + \lambda_{\tau}  n \delta_{y_{\tau}}$ for some integer $\lambda_{\tau}$, and an adelic point $(m_{w})_{w \in \Omega_{L_\tau}} \in W^{\tau}(\A_{L_{\tau}})$ with $\sum_{w \in \Omega_{L_\tau}: w| v'} r_{L_{\tau, w}/K_{v'}}(m_{w})$ sufficiently close to $z_{v'}^\tau$ for all the places $v' \in \Omega_K$ above the places in $S'$, where $r_{L_{\tau, w}/K_{v'}}: W^\tau_{L_{\tau}, w} \to W^\tau_{K_{v'}}$ is the natural map. By Proposition \ref{prop:suffclose},  $\sum_{w \in \Omega_{L_\tau}: w| v'} r_{L_{\tau, w}/K_{v'}}(m_{w})$ and $z_{v'}^\tau$ also have the same image in $h_0(W^\tau_{K_{v'}})/n$.

Let $({x}^{(\tau)}_{w})_{w \in \Omega_{L_\tau}}  := (g_{L_\tau}^\tau(m_{w}))_{w \in \Omega_{L_\tau}}$. Then $({x}^{(\tau)}_{w})_{w \in \Omega_{L_\tau}}  \in X(\A_{L_{\tau}})^{g_{L_{\tau}}}$. But $g_{L_{\tau}}: W_{L_{\tau}} \to X_{L_{\tau}}$ is again a universal torsor (under $G_{L_{\tau}}$). Hence, Theorem \ref{univtpt} yields that  $ ({x}^{(\tau)}_{w})_{w \in \Omega_{L_\tau}} \in X(\A_{L_{\tau}})^{\Br_1(X_{L_{\tau}})}$. Similarly, $(\tilde{{x}}^{(\tau)}_{w'})_{w' \in \Omega_{F_{\tau}}} := (g^\tau_{F_{\tau}}((y_\tau)_{w'}))_{w' \in \Omega_{F_{\tau}}}  \in X(\A_{F_{\tau}})^{\Br_1(X_{F_{\tau}})}$.

Consider the adelic 0-cycle on $X$ whose $v$-adic component is given by
\[ u_v := \sum_{K \in S} \sum_{\tau \in T_K}\sum_{v' \in \Omega_{K} : v'|v}   \left( \sum_{w \in \Omega_{L_{\tau}}: w|v'} s_{L_{\tau, w}/k_v} ({x}^{(\tau)}_w) \  - \lambda_{\tau} n \ \sum_{w' \in \Omega_{F_{\tau}}: w'|v'} s_{F_{\tau, w'}/k_v} ( \tilde{{x}}^{(\tau)}_{w'}) \right), \]
where $s_{L_{\tau, w}/k_v} : X_{L_{\tau, w}} \to X_{k_v}$ and $s_{F_{\tau, w'}/k_v} : X_{F_{\tau, w'}} \to X_{k_v}$ are the natural maps, 
with degree
\[ 
\begin{array}{rl}
\deg(u_v) =& \sum_{K \in S} \sum_{\tau \in T_K} \sum_{v' \in \Omega_{K} : v'|v}  \left( \sum_{w \in \Omega_{L_{\tau}}: w|v'} [(L_{\tau})_w:  k_v] \  - \lambda_{\tau} n \ \sum_{w' \in \Omega_{F_{\tau}}: w'|v'} [(F_{\tau})_{w'}:k_v]\right)\\
=&    \sum_{K \in S} \sum_{\tau \in T_K}   \left( [L_{\tau}:K]   - \lambda_{\tau} n  [F_{\tau}:K] \right) \sum_{v' \in \Omega_{K} : v'|v} [(K)_{v'}:k_v]\\
=&    \sum_{K \in S} [K:k] \sum_{\tau \in T_K}   \left( [L_{\tau}:K]   - \lambda_{\tau} n  [F_{\tau}:K] \right) \\
=&    \sum_{K \in S} [K:k] \sum_{\tau \in T_K}  \Delta_{\tau} \\
=&    d.
\end{array}\]
We claim that $(u_v)_v \in Z_0^d(X_{\A_k})^{\Br_1}$. Indeed, for any $\alpha \in \Br_1(X)$,
\[ 
\begin{array}{rl}
& \sum_{v \in \Omega_k}  \sum_{K \in S} \sum_{\tau \in T_K}\sum_{v' \in \Omega_{K} : v'|v}   \left( \sum_{w \in \Omega_{L_{\tau}}: w|v'} \inv_v\left(\cores_{(L_{\tau})_w/k_v} \left(\alpha \left( {x}^{(\tau)}_w \right) \right)\right) \right. \ \\
& \left.   - \lambda_{\tau}n \ \sum_{w' \in \Omega_{F_{\tau}}: w'|v'} \inv_v\left(\cores_{(F_{\tau})_{w'}/k_v} \left(\alpha \left(  \tilde{{x}}^{(\tau)}_{w'} \right) \right)\right)\right)\\
= &  \left(\sum_{v \in \Omega_k}  \sum_{K \in S} \sum_{\tau \in T_K}\sum_{v' \in \Omega_{K} : v'|v}    \sum_{w \in \Omega_{L_{\tau}}: w|v'} \inv_v\left(\cores_{(L_{\tau})_w/k_v} \left(\alpha \left( {x}^{(\tau)}_w \right) \right)\right) \right) \ \\
&   -\lambda_{\tau} n\left(  \sum_v  \sum_{K \in S} \sum_{\tau \in T_K}\sum_{v' \in \Omega_{K} : v'|v}   \ \sum_{w' \in \Omega_{F_{\tau}}: w'|v'} \inv_v\left(\cores_{(F_{\tau})_{w'}/k_v} \left(\alpha \left(  \tilde{{x}}^{(\tau)}_{w'} \right) \right)\right)\right)\\
\end{array}\]
Now,
\[ 
\begin{array}{rl}
 &  \sum_{v \in \Omega_k}  \sum_{K \in S} \sum_{\tau \in T_K}\sum_{v' \in \Omega_{K} : v'|v}    \sum_{w \in \Omega_{L_{\tau}}: w|v'} \inv_v\left(\cores_{(L_{\tau})_w/k_v} \left(\alpha \left( {x}^{(\tau)}_w \right) \right)\right) \\
=  &  \sum_{v \in \Omega_k}  \sum_{K \in S} \sum_{\tau \in T_K}\sum_{v' \in \Omega_{K} : v'|v}    \sum_{w \in \Omega_{L_{\tau}}: w|v'} \inv_w \left(\alpha \left( {x}^{(\tau)}_w \right) \right) \\
=  &   \sum_{K \in S} \sum_{\tau \in T_K}  \sum_{w \in \Omega_{L_{\tau}}} \inv_w \left(\alpha \left( {x}^{(\tau)}_w \right) \right) \\
=  &   \sum_{K \in S} \sum_{\tau \in T_K}  0 \\
= & 0,
\end{array}\]
where in the first equality we have used the commutative diagram 
\[\begin{array}{ccc}
\Br(\Spec((L_{\tau})_w))& \xrightarrow{\inv_w} & \Q/\Z\\  
 \xdownarrow{\cores_{(L_{\tau})_w/k_v}} & & \xdownarrow{=} \\
  \Br(\Spec(k_v))& \xrightarrow{\inv_v} & \Q/\Z\\  
\end{array}\]
and in third equality we have used the fact that $({x}^{\tau}_w)_{w \in \Omega_{L_\tau}} \in X(\A_{L_{\tau}})^{\Br_1(X_{L_\tau})} $.
Similarly, one can show that 
\[   \sum_{v \in \Omega_k}  \sum_{K \in S} \sum_{\tau \in T_K}\sum_{v' \in \Omega_{K} : v'|v}   \ \sum_{w' \in \Omega_{F_{\tau}}: w'|v'} \inv_v\left(\cores_{(F_{\tau})_{w'}/k_v} \left(\alpha \left(  \tilde{{x}}^{(\tau)}_{w'} \right) \right)\right) = 0.\]
Hence, $(u_v)_v \in Z_0^d(X_{\A_k})^{\Br_1}$, as required. 

Finally, we remark that, since $\sum_{w \in \Omega_{L_\tau}: w| v'} r_{L_{\tau, w}/K_{v'}}(m_{w})$ and $z_{v'}^\tau$ have the same image in $h_0(W^\tau_{K_{v'}})/n$ for all $\tau \in T_K$, all $K \in S$, and all places $v'$ above the places $v \in S'$, and by the compatibility of Suslin's homology with pushforward morphisms, it follows that   $\sum_{K \in S} \sum_{\tau \in T_K}\sum_{v' \in \Omega_{K} : v'|v}   \sum_{w \in \Omega_{L_{\tau}}: w|v'} s_{L_{\tau, w}/k_v} ({x}^{(\tau)}_w)$ and $z_v$ have the same image in $h_0(X_{k_v})/n$ for all $v \in S'$, and thus that $u_v$ and $z_v$ also have the same image in $h_0(X_{k_v})/n$ for all $v \in S'$. Since $X$ is proper, we note that $h_0(X_{k_v}) = \CH_0(X_{k_v})$. \\

(2) The assumption that $\Br_1 X/\Br_0 X$ is finite can only be true when $\Pic \Xbar$ is torsion-free: since $H^3_{\et}(k, \kbar^\times) = 0$ for any number field $k$ and $\kbar^\times[X] = \kbar^\times$ by our assumptions on $X$, by the long exact sequence coming from the Hochschild-Serre spectral sequence
\[ E_2^{p,q} := H^p_{\et}(k, H^q_{\et}(\Xbar, \G_m)) \Rightarrow H^{p+q}_{\et}(X, \G_m) \]

we know that
\[\Br_1 X/\Br_0 X = H^1_{\et}(k, \Pic \Xbar),\]
and, by Kummer theory, $H^1_{\et}(k, \Pic \Xbar)$ is infinite as soon as $\Pic \Xbar$ has non-trivial torsion.
Let $K/k$ be a finite Galois extension such that $\Gal(\kbar/K)$ acts trivially on $\Pic \Xbar$. Then, following for example the proof of \cite{Lia13}*{Prop 3.1.1}, we have that, for any finite extension $l/k$ linearly disjoint from $K$ over $k$, the natural restriction map
\[ \res_{l/k}: \Br_1 X/\Br_0 X \to \Br_1(X_l)/\Br_0(X_l)\]
is an isomorphism.

Let  $(z_v)_v \in \Z_0^d(X_{\A_k})^{\Br_1}$. We fix a closed point $x \in X$ of degree, say, $\delta_x:= [k(x):k]$. In particular, $((x)_w)_{w \in \Omega_{k(x)}} \in X(\A_{k(x)})^{\Br_1(X_{k(x)}) }$. Since $g_{k(x)} : W_{k(x)} \to X_{k(x)}$ is still a universal torsor, Theorem \ref{univtpt} yields $((x)_w)_{w \in \Omega_{k(x)}} \in X(\A_{k(x)})^{g_{k(x)}}$. 

Following \cite{Lia13}, we can construct a finite extension $l/k$, linearly disjoint from $K$ and $k(x)$ over $k$, such that \( [l:k] \equiv d \ (\bmod \ n \delta_x)\), say $[l:k] = d + \lambda n \delta_x$ for some $\lambda \in \Z$, and an adelic point $( \tilde{x}_{w'})_{w' \in \Omega_{l}} \in X(\A_l)^{\Br_1(X_l)}   = X(\A_l)^{g_l}$ with $\sum_{w' \in \Omega_{l}: w'| v} s_{l_{w'}/k_{v}}( \tilde{x}_{w'})$ sufficiently close (in the sense of \cite{Lia13}) to $z_{v}$ for each place $v \in S'$, where $s_{l_{w'}/k_{v}}: X_{l_{w'}} \to X_{K_{v'}}$ is the natural map. Note that $X(\A_l)^{\Br_1(X_l)}   = X(\A_l)^{g_l}$ by the fact that $g_l: W_l \to X_l$ is still a universal torsor together with Theorem \ref{univtpt}.

Consider the adelic 0-cycle on $X$ whose $v$-adic component is given by
\[ u_v := \sum_{w' \in \Omega_{l}: w'| v} s_{l_{w'}/k_{v}}( \tilde{x}_{w'}) - \lambda n \sum_{w \in \Omega_{k(x): w|v}} s_{k(x)_{w}/k_{v}}( (x)_w) \]
and has degree
\[ 
\begin{array}{rl}
\deg(z_v) =& \sum_{w' \in \Omega_{l}: w'| v} [l_{w'}: k_v] - \lambda n \sum_{w \in \Omega_{k(x): w|v}} [k(x)_w: k_v]\\
=& [l:k] - \lambda n  [k(x): k]\\
= & d.
\end{array}\]
We claim that $(u_v) \in Z_0^d(X_{\A_k})^g$. Indeed, take $S := \{l, k(x)\}$ (noticing that $l$ and $k(x)$ are linearly disjoint over $k$ by construction); take $T_{l} := \{\tau\} \subset H^1(l, G)$ and $T_{k(x)} := \{\sigma\} \subset H^1(k(x), G)$, where $\tau$ is any twist such that there exists some $(\tilde{y}_{w'})_{w'} \in W^\tau(\A_l)$ above $(\tilde{x}_{w'})_{w'}$,  and $\sigma$ is any twist such that there exists some  $(y_w)_w \in W^\sigma(\A_{k(x)})$ above $((x)_w)_w$; take $\Delta_\tau = 1$ and $ \Delta_\sigma = -\lambda n$. Then 
$(\tilde{y}_{w'})_{w'} \in Z_0^1(W^\tau_{\A_l}) $ and $(- \lambda n y_w)_w \in Z_0^{-\lambda}(W^\sigma_{\A_{k(x)}})$, with 
\[\textbf{g}_{\ast, \textrm{ad}}\left( \left((\tilde{y}_{w'})_{w'}, (-\lambda n y_{w})_{w}\right) \right) = \left( \sum_{w' \in \Omega_{l}: w'| v}  s_{l_{w'}/k_{v}}( \tilde{x}_{w'})  - \lambda n \sum_{w \in \Omega_{k(x): w|v}}  s_{k(x)_{w}/k_{v}}( (x)_w) \right)_v = (u_v)_v\]
and 
\[ \Delta_\tau \cdot [l:k] + \Delta_\sigma \cdot \delta_x = [l:k] - \lambda n \delta_x = d,\]
as required. Finally, it is easy to check that the $u_v$ and $z_v$ have the same image in $\CH_0(X_{k_v})/n$ for all $v \in S'$.
\end{proof}

Finally, we consider torsors under tori. In the rational points setting, Harpaz and Wittenberg have proved the following nice result.

\begin{thm}[{\cite{HW}*{Th\'eor\`eme 2.1}}] \label{hw} Let $X$ be a smooth, geometrically integral variety over $k$. Let $f: Y \to X$ be a torsor under a $k$-torus $T$. Let $A \subset \Br X$ be the inverse image of $\Br_{nr}(Y) \subset \Br Y$ under $f^\ast : \Br X \to \Br Y$. Then
\[ X(\A_k)^A \subset \bigcup_{\sigma \in H^1(k, T)} f^\sigma(Y^\sigma(\A_k)^{\Br_{nr}}).\]

\end{thm}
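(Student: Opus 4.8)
The statement is a descent theorem of Colliot-Thélène--Sansuc type for torsors under the torus $T$, refined so that the lift is orthogonal to the \emph{unramified} Brauer group (which is forced on us because $X$ and $Y$ need not be proper). The plan is therefore to run the usual descent machinery for $T$-torsors to produce a lift of $(x_v)$ to a single global twist, and then to upgrade that lift using the residual action of $T(\A_k)$ on the fibres. Throughout, write $[f]\in H^1(X,T)$ for the class of the torsor and $\widehat{T}$ for its character module. The one observation that makes the group $A$ the correct one is functorial: if $(x_v)$ lifts to $(y_v)\in Y^\sigma(\A_k)$ with $f^\sigma(y_v)=x_v$, then $(f^{\sigma*}\alpha)(y_v)=\alpha(x_v)$ for every $\alpha\in\Br X$, so that
\[ \langle (y_v),\, f^{\sigma*}\alpha\rangle_{BM} = \langle (x_v),\, \alpha\rangle_{BM}. \]
When $\alpha\in A$ we have $f^{\sigma*}\alpha\in\Br_{nr}(Y^\sigma)$ (unramifiedness is transported along the form $Y^\sigma$ of $Y$), and the right-hand side vanishes for $(x_v)\in X(\A_k)^A$. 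Thus \emph{any} lift is automatically orthogonal to $f^{\sigma*}(A)\subset\Br_{nr}(Y^\sigma)$; the remaining tasks are (i) to produce a lift at all, and (ii) to correct it so that it is also orthogonal to the part of $\Br_{nr}(Y^\sigma)$ not coming from $f^{\sigma*}(A)$.

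\textbf{Step (i): existence of a lift to one twist.} Evaluating $[f]$ at $(x_v)$ gives a family $\theta((x_v))\in H^1(\A_k,T)=\prod_v' H^1(k_v,T)$, and $(x_v)$ lifts to a single global twist $Y^\sigma(\A_k)$ exactly when $\theta((x_v))$ lies in the image of the localisation map $H^1(k,T)\to H^1(\A_k,T)$. By the Poitou--Tate exact sequence for $T$, this image is precisely the annihilator of $H^1(k,\widehat{T})$ under the local pairings $\sum_v\inv_v$. Set $B:=\{[f]\cup\chi : \chi\in H^1(k,\widehat{T})\}$; since $f^*[f]=0$ one has $B\subset\ker(f^*)\subset A$, and the compatibility of cup product with evaluation gives (up to sign) $\langle (x_v),\,[f]\cup\chi\rangle_{BM}=\langle\theta((x_v)),\,\chi\rangle_{PT}$. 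As $X(\A_k)^A\subset X(\A_k)^B$, the class $\theta((x_v))$ annihilates all of $H^1(k,\widehat{T})$, hence descends to a global $\sigma\in H^1(k,T)$, yielding a lift $(y_v)\in Y^\sigma(\A_k)$.

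\textbf{Step (ii): correcting the lift.} The set of all lifts of $(x_v)$ is a torsor under $T(\A_k)$: fixing $(y_v)$, every other lift is $(y_v\cdot t_v)$ for $(t_v)\in T(\A_k)$. I would analyse how $\beta(y_v\cdot t_v)$ varies for $\beta\in\Br_{nr}(Y^\sigma)$. Passing to the generic $T$-torsor $Y^\sigma_{k(X)}\to\Spec k(X)$ and its exact sequence $\Br k(X)\to\Br\bigl(Y^\sigma_{k(X)}\bigr)\to H^1(k(X),\widehat{T})$, the quotient $\Br_{nr}(Y^\sigma)/f^{\sigma*}(A)$ embeds into a group of $\widehat{T}$-cohomology classes, and the translation action of $T(\A_k)$ on the corresponding evaluations is computed by the local cup-product pairings between $T(k_v)$ and $\widehat{T}$. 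The obstruction to choosing $(t_v)$ so that $(y_v\cdot t_v)$ is orthogonal to this quotient is then a Poitou--Tate annihilation condition of the same shape as in Step (i), and it is satisfied precisely because $(x_v)$ was orthogonal to \emph{all} of $A$, not merely to $B$. Solving for $(t_v)$ produces the required point of $Y^\sigma(\A_k)^{\Br_{nr}}$.

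\textbf{Main obstacle.} The hard part is Step (ii): one must pin down the precise description of $\Br_{nr}(Y^\sigma)/f^{\sigma*}(\Br X)$ as a group of $\widehat{T}$-classes that is simultaneously stable under and separated by the $T(\A_k)$-action, and then show that the resulting system of local conditions is globally solvable. This is exactly where the arithmetic duality for the torus $T$ --- the Poitou--Tate pairing together with finiteness of the relevant $\Sha$-groups --- does the essential work, and where one must also control the integrality of $(t_v)$ at all but finitely many places so that $(y_v\cdot t_v)$ remains an adelic point.
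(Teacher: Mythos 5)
First, a point of order: the paper does not prove this statement at all --- Theorem \ref{hw} is imported verbatim from \cite{HW}*{Th\'eor\`eme 2.1} and used as a black box in the proof of Theorem \ref{hw0cyc} --- so there is no internal proof to compare yours against; I will judge the proposal on its own terms. Your Step (i) is correct and standard: the classes $[f]\cup\chi$ for $\chi\in H^1(k,\widehat{T})$ are killed by $f^\ast$, hence lie in $A$; their Brauer--Manin pairing against $(x_v)$ computes the Poitou--Tate pairing of $\theta((x_v))$ against $\chi$; and the Poitou--Tate sequence for $T$ then yields a global $\sigma$ and a lift $(y_v)\in Y^\sigma(\A_k)$. (Even here, the almost-everywhere integrality of $\theta((x_v))$ and of the lift, which you defer to the final paragraph, is a genuine issue when $X$ and $Y$ are not proper; dealing with it is a substantial part of why descent over open varieties is a theorem of Cao--Demarche--Xu rather than a formality.)

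Step (ii), however, is not a proof, and the strategy it describes --- fix the twist $\sigma$ produced by Step (i), then correct the lift by translations $(t_v)\in T(\A_k)$ --- provably fails. Take $X=\Spec k$ and let $Y$ be a torsor whose class is a \emph{nonzero} element of $\Sha^1(k,T)$ (such $T$ exist, e.g.\ norm-one tori of suitable biquadratic extensions). Then $A=\Br(k)$, and $X(\A_k)^A\neq\emptyset$ by the fundamental exact sequence of class field theory; moreover $\theta((x_v))=0$, so Step (i) may legitimately output $\sigma=0$, i.e.\ a lift to $Y$ itself, which is everywhere locally soluble. But $Y(\A_k)^{\Br_{nr}}=\emptyset$: by the perfectness of the Poitou--Tate pairing $\Sha^1(k,T)\times\Sha^2(k,\widehat{T})\to\Q/\Z$ there is $\beta\in\Sha^2(k,\widehat{T})$ pairing nontrivially with $[Y]$; its image $\beta_Y\in\Br_1(Y)$ is unramified (its residues are controlled by classes in $\Sha^1(k,\Q/\Z)=0$), and since $\beta$ is everywhere locally trivial, $\beta_Y$ restricts to a \emph{constant} class on each $Y_{k_v}$ --- in particular its evaluation is invariant under your $T(\A_k)$-action --- with total invariant sum $\langle[Y],\beta\rangle\neq 0$ at every adelic point. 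The theorem holds in this example only through a \emph{different} twist ($\sigma=[Y]$, for which $Y^\sigma\simeq T$ contains the rational point $1$). This pinpoints the missing idea: Step (i) determines $\sigma$ only up to $\Sha^1(k,T)$, the translation action cannot see the classes coming from $\Sha^2(k,\widehat{T})$, and the entire content of the theorem is that the combined freedom (moving $\sigma$ within its coset \emph{and} translating) suffices --- which is not ``a Poitou--Tate annihilation condition of the same shape as in Step (i)''. The proof in \cite{HW} creates this flexibility differently, by passing to the torsor $Y\times^{T}Q$ under a quasi-trivial torus $Q\supset T$ (equivalently, via smooth toric compactifications): there every unramified class is a pullback from $X$, since the torsor becomes Zariski-locally trivial with rational fibres, so \emph{every} lift is automatically orthogonal to $\Br_{nr}$, and one then descends back to a twist of $Y$. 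Two further gaps, smaller but real: the exact sequence you invoke, $\Br k(X)\to\Br(Y^\sigma_{k(X)})\to H^1(k(X),\widehat{T})$, is not correct (for a torsor under a torus over a field $K$ the geometric Picard group vanishes, the algebraic part of the Brauer group is governed by $H^2(K,\widehat{T})$ via the Rosenlicht unit extension, and the remainder is transcendental over $k(X)$); and your parenthetical claim that $f^{\sigma\ast}(A)\subset\Br_{nr}(Y^\sigma)$ also needs an argument, e.g.\ via residues along the smooth map $Y\times P_\sigma\to Y^\sigma$ with geometrically integral fibres.
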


A very similar proof to that of Theorem \ref{univt} (2), together with Theorem \ref{hw},  yields the following immediate statement for 0-cycles in the spirit of Theorem \ref{hw}, albeit with the usual restrictions on the Brauer groups.

\begin{thm}\label{hw0cyc}
Let $X$ be a smooth, proper, geometrically integral variety over $k$. Let $f: Y \to X$ be a torsor under a $k$-torus $T$. Assume that $\Br X/\Br_0 X$ is finite and that there is some finite extension $F/k$ such that $\res_{l/k} : \Br X/ \Br_0 X \to \Br_{nr}(X_l)/ \Br_0(X_l)$ is surjective for all finite extensions $l/k$ linearly disjoint from $F$ over $k$. Then, for any integer $d \in \Z$, for any positive integer $n$, and for any finite subset $S' \subset \Omega_k$ of places of $k$, we have that  $(z_v)_v \in \Z_0^d(X_{\A_k})^{\Br_{nr}}$ implies that there exists some  $(u_v)_v \in \Z_0^d(X_{\A_k})^{f, \Br_{nr}}$ such that $z_v$ and $u_v$ have the same image in $\CH_0(X_{k_v})/n$ for all $v \in S'$.
\end{thm}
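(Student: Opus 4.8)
The plan is to mirror the proof of Theorem~\ref{univt}(2), substituting the Harpaz--Wittenberg inclusion (Theorem~\ref{hw}) for the universal-torsor equality used there (Theorem~\ref{univtpt}). Fix $n$, a finite $S' \subset \Omega_k$, and let $(z_v)_v \in \Z_0^d(X_{\A_k})^{\Br}$. First I would fix an auxiliary closed point $x \in X$ of degree $\delta_x := [k(x):k]$. Since $((x)_w)_{w \in \Omega_{k(x)}}$ is the adelic point attached to a \emph{global} $k(x)$-rational point of $X_{k(x)}$, the reciprocity law places it in $X(\A_{k(x)})^{\Br(X_{k(x)})}$; because $X_{k(x)}$ is smooth geometrically integral and $f_{k(x)}$ is a torsor under the torus $T_{k(x)}$, and because the subgroup $A \subseteq \Br X_{k(x)}$ of Theorem~\ref{hw} satisfies $X(\A_{k(x)})^{\Br(X_{k(x)})} \subseteq X(\A_{k(x)})^{A}$, Theorem~\ref{hw} applied over $k(x)$ produces a class $\sigma_0 \in H^1(k(x),T)$ and an adelic point $(y_w)_w \in Y^{\sigma_0}_{k(x)}(\A_{k(x)})^{\Br_{nr}}$ lying above $((x)_w)_w$.

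The heart of the argument is the Liang-type construction, carried out exactly as in \cite{Lia13}*{Theorem~3.2.1} and the proof of Theorem~\ref{univt}(2). Using the finiteness of $\Br X/\Br_0 X$, I would produce a finite extension $l/k$, linearly disjoint from both $F$ and $k(x)$ over $k$, with $[l:k] \equiv d \pmod{n\delta_x}$, say $[l:k] = d + \lambda n \delta_x$ for some $\lambda \in \Z$, together with an adelic point $(\tilde{x}_{w'})_{w' \in \Omega_l} \in X(\A_l)$ whose pushforward $\sum_{w'\mid v} s_{l_{w'}/k_v}(\tilde{x}_{w'})$ is arbitrarily close to $z_v$ for all $v \in S'$ and which is orthogonal to $\res_{l/k}(\Br X)$. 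This is where the hypotheses are consumed: since $l$ is linearly disjoint from $F$, the map $\res_{l/k}\colon \Br X/\Br_0 X \to \Br(X_l)/\Br_0(X_l)$ is surjective, so orthogonality against the restricted classes upgrades---constants pairing trivially with adelic points---to $(\tilde{x}_{w'})_{w'} \in X(\A_l)^{\Br(X_l)}$. Applying Theorem~\ref{hw} over $l$ and using $A_l \subseteq \Br X_l$, so that $X(\A_l)^{\Br(X_l)} \subseteq X(\A_l)^{A_l} \subseteq \bigcup_{\sigma \in H^1(l,T)} f_l^\sigma(Y_l^\sigma(\A_l)^{\Br_{nr}})$, I obtain a class $\sigma_1 \in H^1(l,T)$ and an adelic point $(y'_{w'})_{w'} \in Y^{\sigma_1}_l(\A_l)^{\Br_{nr}}$ above $(\tilde{x}_{w'})_{w'}$.

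It then remains to assemble the output. I would take $S := \{l, k(x)\}$, $T_l := \{\sigma_1\}$, $T_{k(x)} := \{\sigma_0\}$, and degrees $\Delta_{\sigma_1} := 1$ and $\Delta_{\sigma_0} := -\lambda n$, and feed the tuple $\big((y'_{w'})_{w'}, (-\lambda n\, y_w)_w\big)$ into $\textbf{f}_{\ast,\textrm{ad}}$; this yields the adelic $0$-cycle $(u_v)_v$ whose $v$-component is $\sum_{w'\mid v} s_{l_{w'}/k_v}(\tilde{x}_{w'}) - \lambda n \sum_{w\mid v} s_{k(x)_w/k_v}((x)_w)$. The two building blocks satisfy the unramified Brauer--Manin condition (the second because scaling a $0$-cycle by $-\lambda n$ scales the pairing by $-\lambda n$), so $(y'_{w'})_{w'} \in Z_0^1(Y^{\sigma_1}_{\A_l})^{\Br_{nr}}$ and $(-\lambda n\, y_w)_w \in Z_0^{-\lambda n}(Y^{\sigma_0}_{\A_{k(x)}})^{\Br_{nr}}$. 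A degree computation identical to the one in Theorem~\ref{univt}(2) gives $\deg(u_v) = [l:k] - \lambda n \delta_x = d$, so $(u_v)_v \in \Z_0^d(X_{\A_k})^{f,\Br_{nr}}$. For $v \in S'$ the $k(x)$-contribution is $n$-divisible in $\CH_0(X_{k_v})$ while the $l$-contribution is close to $z_v$, whence $u_v$ and $z_v$ have the same image in $\CH_0(X_{k_v})/n$.

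I expect the genuine obstacle to be the middle step: certifying that the adelic point produced over $l$ lies in the \emph{full} Brauer--Manin set $X(\A_l)^{\Br(X_l)}$, rather than merely in the set cut out by $\res_{l/k}(\Br X)$, since this is exactly what licenses the application of Theorem~\ref{hw} over $l$ and is where the surjectivity-of-restriction and finiteness hypotheses enter. By contrast, once $l$ is chosen with the correct degree congruence and linear disjointness, the degree accounting and the comparison of Chow classes modulo $n$ are routine, following the corresponding verifications in the proof of Theorem~\ref{univt}(2).
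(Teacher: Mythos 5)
Your proof is correct and takes essentially the same approach as the paper: the paper's own proof consists of observing that the argument of Theorem \ref{univt}(2) goes through verbatim once the universal-torsor equality of Theorem \ref{univtpt} is replaced by the Harpaz--Wittenberg inclusion of Theorem \ref{hw}, which is precisely the substitution you carry out (auxiliary closed point $x$, Liang-type extension $l/k$ with $[l:k]\equiv d \pmod{n\delta_x}$, assembly via $\textbf{f}_{\ast,\textrm{ad}}$ with $S=\{l,k(x)\}$ and $\Delta$'s $1$ and $-\lambda n$). You also correctly pinpoint where the hypotheses are consumed, namely in upgrading orthogonality to $\res_{l/k}(\Br X)$ to orthogonality to all of $\Br(X_l)$ so that Theorem \ref{hw} may be applied over $l$.
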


\noindent\emph{Acknowledgements.} The authors would like to thank David Harari,  Olivier Wittenberg, and the anonymous referee for insightful comments on a draft of this paper. They are also immensely grateful to Olivier Wittenberg for providing a proof of Proposition \ref{prop: Wittenberg} and Proposition \ref{prop: Olivier2}. During part of this work, FB was supported by the European Union’s Horizon 2020 research and  
programme under the Marie Sklodowska-Curie grant 840684.

\bibliographystyle{abbrv}
\bibliography{refs}
\end{document}